\newcommand{\Path}{{\mathscr{P}}}
\newcommand{\rf}{{\mathrm{rf_{\diagdown}}}}
\newcommand{\rfu}{{\mathrm{rf_{\diagup}}}}
\newcommand{\rp}{{\mathrm{rp}}}
\newcommand{\rv}{{\mathrm{rv}}}
\newcommand{\st}{{\mathrm{st}}}
\definecolor{light-gray}{gray}{0.8}
\definecolor{mgray}{gray}{.6}
\newtheorem{theorem}{Theorem}[section]
\newtheorem{corollary}[theorem]{Corollary}
\newtheorem{lemma}[theorem]{Lemma}
\newtheorem{proposition}[theorem]{Proposition}
\begin{document}
\date{}
\title{The Hopf Algebra of Generic Rectangulations}
\thanks{The author supported in part by NSF grants DMS-0943855 and CCF-1017217.} 
\author{Emily Meehan}

\maketitle

\begin{abstract}
A family of permutations called 2-clumped permutations forms a basis for 
a sub-Hopf algebra of the Malvenuto-Reutenauer Hopf algebra of permutations.
The 2-clumped permutations are in bijection with certain decompositions of a square into rectangles, called generic rectangulations.
Thus, we can describe the Hopf algebra of 2-clumped permutations using generic rectangulations (we call this isomorphic Hopf algebra the Hopf algebra of generic rectangulations).
In this paper, we describe the cover relations in a lattice of generic rectangulations that is a lattice quotient of the right weak order on permutations.
We then use this lattice to describe the product and coproduct operations in the Hopf algebra of generic rectangulations.
\end{abstract}

\tableofcontents

\section{Introduction}

A \emph{rectangulation} $R$ of size $n$ is a tiling of a square $S$ by $n$ rectangles.  
If no four tiles of a rectangulation share a vertex, then we say that the rectangulation is a \emph{generic rectangulation}.  
In this paper, we consider generic rectangulations up to combinatorial equivalence.  
Specifically, we say that generic rectangulation $R_1$ is combinatorially equivalent to generic rectangulation $R_2$ if there exists a homeomorphism of the square $S$ that fixes the vertices of $S$ and sends $R_1$ to $R_2$.

In \cite
{grec}, Reading describes a map $\gamma$ from permutations of $[n]=\{1,...,n\}$ to generic rectangulations of size $n$.
The map $\gamma$, described in Section \ref{sect:gamma}, restricts to a bijection 
between certain pattern-avoiding permutations called 2-clumped permutations, defined in Section \ref{sect:clumped}, and generic rectangulations.  
The set of all 2-clumped permutations forms a basis for a Hopf subalgebra of the Malvenuto-Reutenauer Hopf algebra of permutations.
(For a description of the Malvenuto-Reutenauer Hopf algebra of permutations, see  \cite{Malv, MalvReut}.)
We call the Hopf algebra of 2-clumped permutations~$Cl^2$ and use $Cl_n^2$ to denote the set of 2-clumped permutations of size $n$. 
The product and coproduct operations in $Cl^2$, which we denote  respectively by $\bullet_{Cl^2}$ and $\Delta_{Cl^2}$, can be defined using the corresponding operations in the Malvenuto-Reutenauer Hopf algebra of permutations and then eliminating elements not in the Hopf algebra of 2-clumped permutations. 
However, to better describe the subalgebra, in this paper we characterize it as a Hopf algebra of generic rectangulations. 

Let $gRec$ denote the Hopf algebra of generic rectangulations that is isomorphic to $Cl^2$ via~$\gamma$ and let $\bullet_{gR}$ and $\Delta_{gR}$ respectively denote the product and coproduct operations in~$gRec$.
We denote the set of all generic rectangulations of size $n$ by~$gRec_n$.
Given two generic rectangulations $R_1\in gRec_p$ and $R_2\in gRec_q$ we will describe $R_1 \bullet_{gR} R_2$ as the sum of the elements in an interval of a lattice on~$gRec_{p+q}$.
This is analogous to the description of the product in the Malvenuto-Reutenauer Hopf algebra of permutations as a sum of all elements in an interval of the right weak order on permutations of $[p+q]$ (defined below).
Our first main result will describe this lattice on $gRec_{n}$ in terms of the combinatorics of generic rectangulations.
Before providing this description, we explain the relationship between this lattice and the right weak order on permutations.
Let $S_n$ denote the permutations of $[n]$.
Let $x,y \in S_n$, where~$x$ and~$y$ are represented in one-line notation.
Then $x \lessdot y$ in the \emph{right weak order} on~$S_n$ if and only if~$y$ can be obtained from $x=x_1 \cdots x_n$ by interchanging entries~$x_i$ and~$x_{i+1}$ of $x$ where $x_i<x_{i+1}$.  
The fibres of the map $\gamma$ from $S_n$ to $gRec_n$ define a lattice congruence on the right weak order.
The natural isomorphism from the quotient of the right weak order on $S_n$ (modulo this congruence) to the set of generic rectangulations defines a lattice structure on $gRec_n$. 
Reusing notation, we also let $gRec_n$ denote this partial order on generic rectangulations of size $n$. 
In our description of the lattice $gRec_n$, we use two types of local moves, called generic pivots and  wall slides, illustrated by the five diagrams in Figure \ref{fig:moves}.  

To describe these local moves, we require a few additional definitions.
We call a point in $S$ a \emph{vertex} of $R$ if the point is the vertex of some rectangle of $R$.  
An \emph{edge} of $R$ is a line segment contained in the side of some rectangle of $R$ such that the endpoints of the line segment are vertices and the segment has no vertices in its interior.
 A maximal union of edges forming a line segment is a \emph{wall} of $R$.   
 
The right two diagrams of Figure \ref{fig:moves} show wall slides.
Given a vertical wall $W$ of $R$, a \emph{vertical wall slide} switches the order of two walls incident to the interior of $W$.  
Let $W_l$ and~$W_r$ be walls of $R$ incident to the interior of $W$ such that~$W_l$ extends to the left of~$W$, wall~$W_r$ extends to the right of~$W$ and no other wall incident to~$W$ has endpoint between the endpoints of $W_l$ and $W_r$. 
A wall slide performed on $W_l$ and $W_r$ switches their relative orders along $W$ and results in a new generic rectangulation.
Similarly, a \emph{horizontal wall slide} switches the order of two walls incident to a horizontal wall $W$ and results in a new generic rectangulation.  
If~$W_u$ is incident to $W$, extending up from $W$, and $W_d$ is incident to~$W$, extending down from~$W$, such that no other walls incident to $W$ have endpoints between the endpoints of~$W_u$ and~$W_d$, then switching the order of $W_u$ and $W_d$ on $W$ is a horizontal wall slide. 
  
The precise definition of a generic pivot is more complicated than that of a wall slide. 
We call an edge that can participate in a generic pivot a \emph{pivotable edge}.
A \emph{generic pivot} replaces a pivotable vertical (or horizontal) edge of a  generic rectangulation with a distinct horizontal (or vertical) edge resulting in a new generic rectangulation.  
The left three diagrams of Figure~\ref{fig:moves} illustrate the three types of generic pivots.
In each case, a segment separating two rectangles is removed and replaced with a segment that produces a distinct generic rectangulation.
The dashed segments of each diagram indicate edges to which no additional segments of $R$ may be incident.
If a segment of $R$ is incident to a dashed edge, then the edge separating the two rectangles is not pivotable.  
In this case, a wall slide or sequence of wall slides must move the edges incident to the dashed segments before the generic pivot can occur.   
When a generic pivot is performed, the new edge introduces new vertice(s) along some wall(s) of $R$ and these vertice(s) must be placed with respect to the other vertices already on that wall so that no edges are incident to dashed segments in the new rectangulation.

We now state our first main result.

\begin{theorem}
\label{thm: covers}
Let $R_1$ and $R_2$ be generic rectangulations of size $n$.  Then $R_1 \lessdot R_2$ in $gRec_n$ if and only if :
\begin{itemize}
\item $R_1$ and $R_2$ are related by a generic pivot such that the pivoted edge is vertical in $R_1$~or
\item $R_1$ and $R_2$ are related by a single wall slide as shown in the two rightmost diagrams of Figure \ref{fig:moves}.
\end{itemize}
\end{theorem}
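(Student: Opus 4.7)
The plan is to exploit the fact that $gRec_n$ is defined as a lattice quotient of the right weak order on $S_n$ via the fiber map $\gamma$. In any lattice quotient $L/\Theta$, a cover relation $[x] \lessdot [y]$ in the quotient is witnessed by a cover $x' \lessdot y'$ in $L$ with $x' \in [x]$ and $y' \in [y]$. For concreteness, I would work with the minimum element of each fiber of $\gamma$: if $R_1 \lessdot R_2$ in $gRec_n$, then taking $x$ to be the bottom of $\gamma^{-1}(R_1)$, there is some $y \in \gamma^{-1}(R_2)$ with $x \lessdot y$ in $S_n$, and $y$ is obtained from $x$ by swapping an adjacent ascent $x_i < x_{i+1}$. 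Conversely, every such adjacent swap $x \lessdot y$ in $S_n$ either preserves the fiber (giving $\gamma(x) = \gamma(y)$) or induces a cover in $gRec_n$. Thus the theorem reduces to understanding exactly how a single adjacent transposition $s_i$ acts on the generic rectangulation $\gamma(x)$ when it crosses fibers.

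Next, I would carry out a local analysis of the map $\gamma$ as described in \cite{grec}. The construction of $\gamma$ processes the entries of a permutation one at a time, inserting a new rectangle at each step according to the relative values and positions of consecutive entries. Swapping $x_i$ and $x_{i+1}$ therefore changes $\gamma(x)$ only in a neighborhood of the two rectangles labeled by these entries. I would enumerate the possible local configurations of these two rectangles in $\gamma(x)$: they either share a vertical edge, share a horizontal edge, are separated by a shared wall with other rectangles between them, or are not adjacent at all. In each configuration, the condition $x_i < x_{i+1}$, combined with the 2-clumped pattern-avoidance condition on the bottom of each fiber, dictates whether the swap changes $\gamma$. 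When it does change $\gamma$, the change is purely local and takes the form of one of the five diagrams in Figure~\ref{fig:moves}.

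With the local analysis in hand, I would match cases. For the generic pivot cases (three leftmost diagrams), the adjacent transposition exchanges two entries whose rectangles are separated by a pivotable vertical edge; swapping the entries rotates the separating edge to horizontal, which is precisely a generic pivot applied to a vertical edge. The ``dashed edge'' prohibitions correspond exactly to the requirement that $x$ be the minimum of its fiber: any obstructing incident wall would force the transposition to live strictly inside a fiber (producing a contraction, not a cover). For the wall slide cases (two rightmost diagrams), the transposition exchanges two entries whose rectangles meet along a wall, changing the relative order of two perpendicular walls attached to it; I would verify that the orientation constraints (shown in the figure) correspond to $x_i < x_{i+1}$, while the reverse orientations arise when the swap is internal to a fiber. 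The converse direction is then a matter of running the same local analysis in reverse: starting from any of the five local moves applied to $R_1$, one can read off the adjacent transposition on the minimum element of $\gamma^{-1}(R_1)$ that realizes it, thereby certifying the cover in $gRec_n$.

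The main obstacle will be the case analysis around the ``dashed edge'' conditions, i.e., showing that a pivotable edge in $R_1$ corresponds precisely to a fiber-crossing adjacent transposition on the minimum of $\gamma^{-1}(R_1)$, and not merely one that gets absorbed by wall slides within the fiber. This requires a careful description of the fiber structure of $\gamma$ in terms of forbidden local incidences, which I expect to verify by combining the explicit recursive construction of $\gamma$ with the characterization of 2-clumped permutations from Section~\ref{sect:clumped}. Once this bookkeeping is done, both directions of the theorem follow by matching local pictures.
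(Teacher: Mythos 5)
Your high-level strategy (reduce covers in $gRec_n$ to fiber-crossing adjacent transpositions and then do a local analysis of $\gamma$) is indeed the paper's strategy, but there are two genuine gaps. First, your anchoring lemma is false as stated. You claim that if $R_1\lessdot R_2$ then the \emph{bottom} of $\gamma^{-1}(R_1)$ is covered by some element of $\gamma^{-1}(R_2)$. The correct general fact (Proposition \ref{covers}, from Reading's work) is dual in the wrong sense for you: a cover $R_1\lessdot R_2$ is always witnessed by $x\lessdot y$ where $y$ is the bottom of the \emph{upper} fiber (the 2-clumped element with $\gamma(y)=R_2$) and $x$ is merely \emph{some} element of the lower fiber. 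Your version fails even for lattice congruences of the weak order: on $S_3$, the equivalence with single nontrivial class $\{213,231\}$ is a lattice congruence, the quotient has $[213]\lessdot[321]$, but the bottom element $213$ of the lower class is covered by no element congruent to $321$; only the top element $231$ of the lower class is. So the entire local analysis must be carried out at $\psi(R_2)$, not at the minimum of $\gamma^{-1}(R_1)$, and your converse direction ("read off the adjacent transposition on the minimum element of $\gamma^{-1}(R_1)$") is built on this unsupported claim.

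Second, the technical heart of the theorem is asserted rather than proved, and one of your guiding heuristics is wrong. The hard part of the "if" direction is showing that when a pivotable edge (or slidable pair of walls) exists in $R_2$, the labels of the two relevant rectangles are \emph{adjacent} in $\psi(R_2)$; the paper does this by combining the recursive construction of $\psi$ with pattern-avoidance arguments (e.g., showing that a bad configuration forces an occurrence of $3$-$51$-$4$-$2$ or $2$-$4$-$51$-$3$, as in Proposition \ref{prop:pattern} and the proofs of Propositions \ref{prop:gpiv1} and \ref{prop:wallslide}), plus symmetry lemmas to cover the remaining diagrams. Your claim that "any obstructing incident wall would force the transposition to live strictly inside a fiber" misdescribes the obstruction: when a dashed-edge condition fails, the two labels are not adjacent in \emph{any} permutation of the fiber (see the discussion of $\gamma(3142576)$ in Figure \ref{fig:cover seq}), so there is no adjacent transposition to absorb; the fiber-internal transpositions are instead exactly those characterized by Theorem \ref{thm:grecmove}. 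Likewise, the "only if" direction genuinely requires splitting into the cases $\rho(x)=\rho(y)$ versus $\rho(x)\neq\rho(y)$ and importing the cover theorem for diagonal rectangulations (Theorem \ref{thm:coverindrec}) together with a wall-shuffle comparison; none of this bookkeeping is present in the proposal. Until the adjacency arguments and the case analysis are actually carried out, neither direction is established.
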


\begin{figure}
\begin{tikzpicture}[scale=.68]
\draw (0,0) -- (0,2);
\draw[dashed, ultra thick] (0,2) -- (1,2);
\draw (1,2) -- (2,2) -- (2,0);
\draw[dashed, ultra thick] (2,0) -- (1,0);
\draw (1,0) -- (0,0);
\draw (1,0) -- (1,2);

\draw (1,2.5) -- (0.75,3);
\draw (1,2.5) -- (1.25, 3);
\draw[fill] (1, 3) circle [radius=1pt];

\draw (0,3.5) -- (0,4.5);
\draw[dashed, ultra thick] (0,4.5) -- (0,5.5);
\draw (0,5.5) -- (2,5.5) -- (2,4.5);
\draw[dashed, ultra thick] (2,4.5) -- (2,3.5);
\draw (2,3.5) -- (0,3.5);
\draw (0,4.5) -- (2,4.5);

\draw (4,0) rectangle (5,2);
\draw (5,1) -- (6,1)--(6,0);
\draw[dashed, ultra thick] (6,0) -- (5,0);

\draw (5,2.5) -- (4.75,3);
\draw (5,2.5) -- (5.25, 3);
\draw[fill] (5, 3) circle [radius=1pt];

\draw (4,3.5) rectangle (6,4.5);
\draw[dashed, ultra thick] (4, 4.5)--(4, 5.5);
\draw(4, 5.5) -- (5, 5.5)-- (5, 4.5);

\draw (9,0) rectangle (10,2);
\draw (9, 1) -- (8, 1) -- (8, 2);
\draw[dashed, ultra thick](8,2) --(9,2);

\draw (9,2.5) -- (8.75,3);
\draw (9,2.5) -- (9.25, 3);
\draw[fill] (9, 3) circle [radius=1pt];

\draw (8, 4.5) rectangle (10, 5.5);
\draw (9, 4.5) -- (9, 3.5) -- (10, 3.5); 
\draw[dashed, ultra thick] (10, 3.5) -- (10, 4.5);

\draw (13, 0) -- (13, .5);
\draw[dashed, ultra thick] (13, .5) -- (13, 1.5);
\draw (13, 1.5) -- (13, 2);
\draw (12, .5) -- (13, .5);
\draw (13, 1.5) -- (14, 1.5);

\draw (13,2.5) -- (12.75,3);
\draw (13,2.5) -- (13.25, 3);
\draw[fill] (13, 3) circle [radius=1pt];

\draw (13, 3.5) -- (13, 4);
\draw[dashed, ultra thick] (13, 4) -- (13, 5);
\draw (13, 5) -- (13, 5.5);
\draw (13, 4) -- (14, 4);
\draw (12, 5) -- (13, 5);

\draw(16, 1) -- (16.5, 1);
\draw[dashed, ultra thick] (16.5, 1) -- (17.5, 1);
\draw (17.5, 1) -- (18, 1);
\draw (16.5, 1) -- (16.5, 2);
\draw (17.5, 1) -- (17.5, 0);

\draw (17,2.5) -- (16.75,3);
\draw (17,2.5) -- (17.25, 3);
\draw[fill] (17, 3) circle [radius=1pt];

\draw(16, 4.5) -- (16.5, 4.5);
\draw[dashed, ultra thick] (16.5, 4.5) -- (17.5, 4.5);
\draw (17.5, 4.5) -- (18, 4.5);
\draw (16.5, 4.5) -- (16.5, 3.5);
\draw (17.5, 4.5) -- (17.5, 5.5);
\end{tikzpicture}
\caption{Every cover relation in $gRec_n$ is obtained by performing one of the local changes shown in the figure on a generic rectangulation.  
If the configuration is to participate in the illustrated move, no edge of the generic rectangulation can have an endpoint in the interior of a dashed segment.}
\label{fig:moves}
\end{figure}
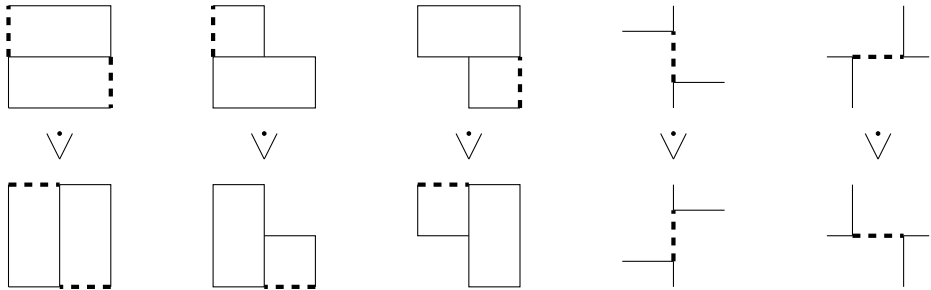

Figure \ref{fig:cover seq} shows several examples of the cover relations described in Theorem \ref{thm: covers}.
Note that each rectangle in the figure is labeled by an element of $\{1,...,7\}$.
The map $\gamma$, which provides a numbering of the rectangles in a generic rectangulation, is described in Section \ref{sect:gamma}.
Let \emph{rectangle~$i$} refer to the rectangle labeled with~$i$.
In the first rectangulation of Figure \ref{fig:cover seq}, a generic pivot cannot be performed on the edge separating the shaded rectangles since the edge separating rectangles 1 and 2 is incident to the interior of the upper segment (or top) of rectangle 3. 
Performing a horizontal wall slide on the bold edges of the first rectangulation of the sequence, we obtain the second rectangulation.
A generic pivot can then be performed on the edge separating rectangles 3 and 4 in the second rectangulation of the sequence to obtain the third rectangulation. 
To obtain the fourth rectangulation of the sequence, a generic pivot is performed on the edge between rectangles 2 and 5.  
Performing the pivot introduces a new vertex along the wall separating rectangles~5 and 7.  
To avoid having an edge incident to the right side of rectangle 5 in the fourth rectangulation (as is disallowed in Figure~\ref{fig:moves}), the left vertex of the edge separating rectangles 6 and 7 is placed above the right vertex of the edge separating rectangles~2 and 5.
This is possible because, before performing the generic pivot on the edge separating rectangles 2 and 5, the edge between rectangles 6 and 7 can be moved up without changing the equivalence class of the generic rectangulation.

\begin{figure}
\begin{tikzpicture}[scale=.33]

\draw(0,0)--(0,7)-- (7,7)-- (7,0)--(0,0);
\draw[fill=light-gray] (0,0) rectangle (3,5);
\draw[fill=light-gray] (3,0) rectangle (4,5);
\draw[line width = 2pt](0,5)--(4,5);
\draw(7,1)--(5,1);
\draw[line width = 2pt](1,7)--(1,5);
\draw[line width = 2pt](3,5)--(3,0);
\draw(4,7)--(4,0);
\draw(5,7)--(5,0);
\node(1) at (.5,6.5) {$1$};
\node(2) at (1.5,5.6) {$2$};
\node(3) at (2.5,4.4) {$3$};
\node(4) at (3.5,3.5) {$4$};
\node(5) at (4.5,2.5) {$5$};
\node(6) at (5.5,1.5) {$6$};
\node(7) at (6.5,.5) {$7$};

\node(perm) at (3.5, -1) {$\gamma(3{\bf14}2576)$};

\draw(8,3.5)--(9,4);
\draw(8,3.5)--(9, 3);
\draw[fill] (9, 3.5) circle [radius=.9pt];

\draw(10,0)--(10,7)-- (17,7)-- (17,0)--(10,0);
\draw[line width = 2pt](10,0) rectangle (11,5);
\draw[line width = 2pt](11,0) rectangle (14,5);
\draw(10,0)--(14,0);
\draw(10,5)--(14,5);
\draw(17,1)--(15,1);
\draw(10,0)--(10,5);
\draw(13,7)--(13,5);
\draw(11,5)--(11,0);
\draw(14,7)--(14,0);
\draw(15,7)--(15,0);
\draw(14,0)--(14,5);
\node(1) at (12.5,6.5) {$1$};
\node(2) at (13.5,5.6) {$2$};
\node(3) at (10.5,4.4) {$3$};
\node(4) at (11.5,3.5) {$4$};
\node(5) at (14.5,2.5) {$5$};
\node(6) at (15.5,1.5) {$6$};
\node(7) at (16.5,.5) {$7$};

\node(perm) at (13.5, -1) {$\gamma({\bf34}12576)$};

\draw(18,3.5)--(19,4);
\draw(18,3.5)--(19, 3);
\draw[fill] (19, 3.5) circle [radius=.9pt];

\draw(20,0)--(20,7)-- (27,7)-- (27,0)--(20,0);
\draw[line width = 2pt](23, 5) rectangle (24,7);
\draw[line width = 2pt](24,0) rectangle (25, 7);
\draw(20,5)--(24,5);
\draw(27,1)--(25,1);
\draw(20,4)--(24,4);
\draw(23,7)--(23,5);
\draw(24,7)--(24,0);
\draw(25,7)--(25,0);
\node(1) at (22.5,6.5) {$1$};
\node(2) at (23.5,5.6) {$2$};
\node(3) at (20.5,4.5) {$3$};
\node(4) at (21.5,3.5) {$4$};
\node(5) at (24.5,2.5) {$5$};
\node(6) at (25.5,1.5) {$6$};
\node(7) at (26.5,.5) {$7$};

\node(perm) at (23.5, -1) {$\gamma(431{\bf25}76)$};

\draw(28,3.5)--(29,4);
\draw(28,3.5)--(29, 3);
\draw[fill] (29, 3.5) circle [radius=.9pt];

\draw(30,0)--(30,7)-- (37,7)-- (37,0)--(30,0);
\draw(30,3)--(35,3);
\draw(37,6)--(35,6);
\draw(30,2)--(34,2);
\draw(33,7)--(33,3);
\draw(34,3)--(34,0);
\draw(35,7)--(35,0);
\node(1) at (32.5,4.5) {$1$};
\node(2) at (33.5,3.5) {$2$};
\node(3) at (30.5,2.5) {$3$};
\node(4) at (31.5,1.5) {$4$};
\node(5) at (34.5,.5) {$5$};
\node(6) at (35.5,6.5) {$6$};
\node(7) at (36.5,5.5) {$7$};

\node(perm) at (33.5, -1) {$\gamma(4315276)$};
\end{tikzpicture}
\caption{A sequence of cover relations in $gRec_7$.}
\label{fig:cover seq}
\end{figure}
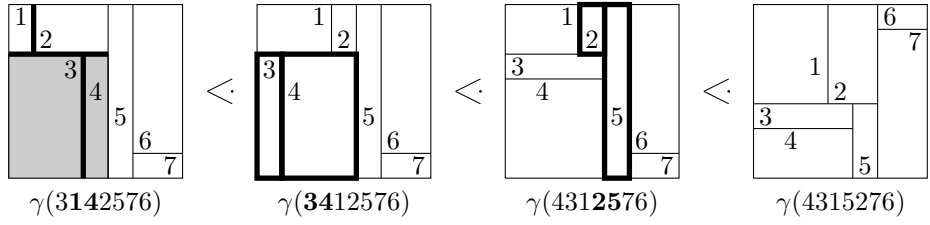



Having described the lattice $gRec_n$, we use this lattice to describe $\bullet_{gR}$, the product operation in the Hopf algebra $gRec$.
Given generic rectangulations $R_1$ and~$R_2$, let $R_1R_2'$ denote the \emph{horizontal concatenation} of $R_1$ and $R_2$.
This is a generic rectangulation obtained by first placing $R_1$ adjacent to~$R_2$ so that the right side of $R_1$ coincides with the left side of $R_2$.
The resulting figure is rescaled so that the outer boundary of $R_1 \cup R_2$ is a square and wall slides are performed on the shared wall so that all edges extending left from the shared wall are below all edges extending right from the shared wall. 
Let $R_2'R_1$ denote the \emph{vertical concatenation} of $R_1$ and $R_2$ which is obtained by placing $R_1$ adjacent to $R_2$ so that the top of~$R_2$ coincides with the bottom of $R_1$, rescaling, and then performing wall slides along the shared wall so that all edges extending down from the wall are left of all edges extending up from the wall.
 Examples of a horizontal and a vertical concatenation are shown in Figure~\ref{fig:concat}.
 The numbering of the rectangles in the figure again comes from the map~$\gamma$, defined in Section~\ref{sect:gamma}.
 We denote the horizontal and vertical concatenations by $R_1R_2'$ and $R_2'R_1$ respectively because this notation mimics the notation used in Section \ref{sect:clumped} to discuss related permutations. 
 
 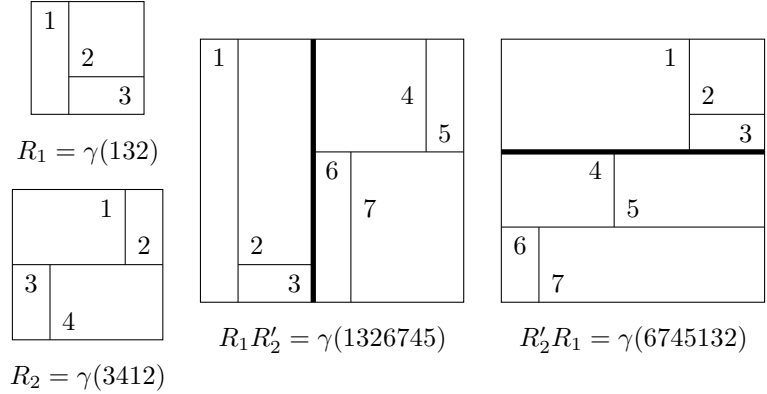
\begin{figure}
 \begin{tikzpicture}[scale=.5]
 \draw(0,0) rectangle (4,4);
 \draw(0,2)--(4,2);
 \draw(1,0)--(1,2);
 \draw(3,2)--(3,4);
 \node at (2,-1) {$R_2=\gamma(3412)$};
 \node at (.5,1.5) {$3$};
  \node at (1.5,.5) {$4$};
 \node at (2.5, 3.5) {$1$};
 \node at (3.5,2.5) {$2$};
 
 \draw(.5,6) rectangle (3.5,9);
 \draw(1.5,6)--(1.5, 9);
 \draw(1.5, 7)--(3.5, 7);
 \node at (2,5) {$R_1=\gamma(132)$}; 
 \node at (1, 8.5) {$1$};
 \node at (2, 7.5) {$2$};
 \node at (3, 6.5) {$3$};
 
 \draw(5,1) rectangle (12, 8);
 \draw(6,1)--(6,8);
 \draw[line width = 2pt](8,1)--(8,8);
 \draw(9,1)--(9,5);
 \draw(6,2)--(8,2);
 \draw(8,5)--(12,5);
 \draw(11,5)--(11,8);
 \node at (8.5, 0) {$R_1R_2'=\gamma(1326745)$};
 \node at (7.5,1.5) {$3$};
 \node at (6.5, 2.5) {$2$};
 \node at (5.5, 7.5) {$1$};
 \node at (8.5, 4.5) {$6$};
 \node at (9.5, 3.5) {$7$};
 \node at (10.5, 6.5) {$4$};
 \node at (11.5, 5.5) {$5$};
 
 \draw (13, 1) rectangle (20, 8);
 \draw(13, 3) -- (20,3);
 \draw(14, 1)--(14, 3);
 \draw[line width = 2pt] (13, 5)--(20, 5);
 \draw (16, 3)--(16, 5);
 \draw (18, 5)--(18, 8);
 \draw (18, 6)--(20,6);
 \node at (16.5, 0) {$R_2'R_1=\gamma(6745132)$};
 \node at (13.5, 2.5) {$6$}; 
 \node at (14.5, 1.5) {$7$};
 \node at (15.5, 4.5) {$4$};
 \node at (16.5, 3.5) {$5$};
 \node at (17.5, 7.5) {$1$};
 \node at (18.5, 6.5) {$2$};
 \node at (19.5, 5.5) {$3$};
  \end{tikzpicture}
  \caption{The horizontal concatenation $R_1R_2'$ and the vertical concatenation~$R_2'R_1$ of generic rectangulations $R_1$ and $R_2$.  In each concatenation, the wall shared by $R_1$ and $R_2'$ is bolded.} 
  \label{fig:concat}
 \end{figure}

    
Our next main result is the following theorem.

\begin{theorem}\label{thm:product}
Let $R_1$ and $R_2$ be generic rectangulations of size $p$ and $q$ respectively such that $p+q=n$.  Then 
\begin{center}
$R_1\bullet_{gR} R_2=\sum [R_1R_2',R_2'R_1]$
\end{center}
where the summation denotes the sum of all elements of $gRec_n$ in the interval $[R_1R_2',R_2'R_1]$.
\end{theorem}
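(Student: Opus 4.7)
The plan is to transport the known product formula in the Malvenuto--Reutenauer (MR) Hopf algebra to $gRec$ through the isomorphism $\gamma$, using the fact that the fibres of $\gamma$ form a lattice congruence of the right weak order.

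Recall the classical description of the MR product: for $u \in S_p$ and $v \in S_q$,
\[
u \bullet_{MR} v \;=\; \sum_{w \in [u\bar v,\,\bar v u]} w,
\]
where $\bar v$ denotes $v$ with each entry increased by $p$ and the interval is taken in the right weak order on $S_{p+q}$. I would choose 2-clumped preimages $u \in \gamma^{-1}(R_1)$ and $v \in \gamma^{-1}(R_2)$. Since $Cl^2$ is a Hopf subalgebra spanned by the 2-clumped permutations, the product $u \bullet_{Cl^2} v$ is the sum of the 2-clumped permutations appearing in the MR product, and applying $\gamma$ term by term computes $R_1 \bullet_{gR} R_2$.

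Next I would convert this into a sum over an interval of $gRec_n$. Because $\gamma$ is the projection associated to a lattice congruence, it sends intervals in the weak order to intervals in $gRec_n$, giving $\gamma([u\bar v,\bar v u]) = [\gamma(u\bar v),\gamma(\bar v u)]$. Since the 2-clumped permutations are canonical representatives of the congruence classes, the restriction of $\gamma$ to the 2-clumped permutations in $[u\bar v,\bar v u]$ is a bijection onto $[\gamma(u\bar v),\gamma(\bar v u)]$; each rectangulation in the latter interval therefore appears with coefficient one in $R_1 \bullet_{gR} R_2$.

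The remaining---and, I expect, main---obstacle is to identify the two endpoints with the concatenations in the statement: $\gamma(u\bar v) = R_1R_2'$ and $\gamma(\bar v u) = R_2'R_1$. This requires descending into the construction of $\gamma$ from Section~\ref{sect:gamma} and tracking how the first block of entries produces one sub-rectangulation, the shifted second block produces the other, and the placement of the wall between them is forced. The intuition is that in $u\bar v$ all small entries precede all large ones, so $\gamma(u\bar v)$ places $R_1$ in a left region and $R_2$ in a right region separated by a single vertical wall, with every edge extending left from that wall sitting below every edge extending right---precisely the defining wall-slide condition of the horizontal concatenation $R_1R_2'$. The symmetric analysis of $\bar v u$ yields the vertical concatenation $R_2'R_1$. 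Once these two identifications are established, the theorem follows.
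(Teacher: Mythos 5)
Your proposal is correct and follows essentially the same route as the paper: the paper invokes the specialization of Law's product formula (Corollary~\ref{cor:prod}, $x \bullet_{Cl^2} y = \sum[xy',y'x]$, which is exactly your MR-to-$Cl^2$ step), applies $\gamma$ to transport the interval to $gRec_n$ via the lattice-congruence structure, and then identifies $\gamma(xy')=R_1R_2'$ and $\gamma(y'x)=R_2'R_1$. The endpoint identification that you flag as the main remaining obstacle is likewise dispatched in a single sentence in the paper (the concatenations are set up in the introduction precisely so that this follows from the construction of $\gamma$), so your outline matches the published proof step for step.
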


To describe $\Delta_{gR}$, the coproduct in $gRec$, we require several additional definitions.  
Let $R$ be a generic rectangulation and $\Path$ be a path from the top-left corner to the bottom-right corner of $R$, consisting of down and right steps which are edges of $R$.
We say that $\Path$ is a \emph{good path} if it meets the following two conditions: 
\begin{itemize}
\item The interior of no vertical segment of $\Path$ contains vertices $v$ and $v'$ of $R$ such that vertex $v$ is the upper-left vertex of a rectangle of $R$, vertex $v'$ is the lower-right vertex of a rectangle of~$R$ and~$v$ is below~$v'$.  (The left diagram of Figure \ref{fig:path} illustrates this configuration.)
\item The interior of no horizontal segment of $\Path$ contains vertices $h$ and $h'$ of $R$ such that vertex $h$ is the lower-right vertex of a rectangle of $R$, vertex $h'$ is the upper-left vertex of a rectangle of $R$ and~$h$ is left of $h'$.  (The right diagram of Figure \ref{fig:path} illustrates this configuration.)
\end{itemize}

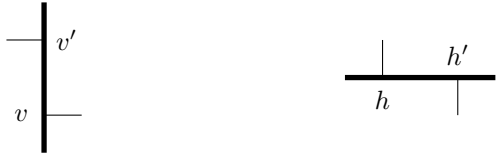
\begin{figure}
\begin{tikzpicture}
\draw[line width = 2pt](0,0)--(0,2);
\draw(0,.5)--(.5,.5);
\draw (0,1.5)--(-.5, 1.5);
\node at (-0.3,.5) {$v$};
\node at (0.3, 1.5) {$v'$};

\draw[line width = 2 pt] (4,1)--(6, 1);
\draw(4.5, 1)--(4.5, 1.5);
\draw (5.5, 1)--(5.5, .5);
\node at (4.5, .7) {$h$};
\node at (5.5, 1.3) {$h'$};
\end{tikzpicture}\caption{Configurations that good paths avoid.} \label{fig:path}
\end{figure}

An example of a good path in a generic rectangulation $R$ is shown as the darkened path in the upper-left diagram of Figure~\ref{fig:completions}.  
In this rectangulation, the path traveling from the upper-left corner of $S$ to the lower-right corner of $S$, passing above rectangles 1, 3, 4, 5, and 8, and below the remaining rectangles is \emph{not} a good path.  
The lower-right vertex of rectangle 4 and the upper-left vertex of rectangle~7, both lying on the interior of a single vertical segment of the path, violate the first condition in the definition of a good path.   

\begin{figure}
\begin{tikzpicture}[scale=.29]
\draw (0,0) rectangle (10,10);
\draw (0,6)--(5,6)--(5,0);
\node at (4.5,5.5) {\small{5}};
\draw (0,8)--(1,8)--(1,6);
\node at (.5,7.5) {\small{3}};
\draw (5,1)--(8,1)--(8,0);
\node at (7.5, .5) {\small{8}};
\draw (5,4)--(8,4)--(8,1);
\node at (6.5, 1.5) {\small{7}};
\draw (1,8)--(5,8)--(5,6);
\node at (1.5,6.6) {\small{4}};
\draw (3,10)--(3,8);
\node at (2.5, 9.4) {\small{1}};
\draw (5,10)--(5,8);
\node at (3.5, 8.6) {\small{2}};
\draw (8,2.9)--(10,2.9);
\node at (9.3, 2.4) {\small{10}};
\draw (8,4)--(10,4);
\node at (8.5, 3.4) {\small{9}};
\node at (5.5, 4.6) {\small{6}};
\node at (5,-1) {$R=\gamma(5387412\hspace{.15em} \scalebox{0.8}{\raisebox{.08em}{10}} \hspace{.2em}  96)$};
\node at (5.6, -2.2) {$=\gamma(538741\hspace{.15em} \scalebox{0.8}{\raisebox{.08em}{10}} \hspace{.2em}  926)$};
\node at (5.6, -3.4) {$=\gamma(587 \hspace{.15em} \scalebox{0.8}{\raisebox{.08em}{10}} \hspace{.2em} 934126)$};
\draw  (8.15,-.935) circle (.62cm);
\draw  (7.5,-2.17) circle (.62cm);
\draw  (5.7,-3.36) circle (.62cm);
\draw[line width = 2pt](0,10)--(3,10)--(3,8)--(5,8)--(5,4)--(10,4)--(10,0);

\draw (15,0) rectangle (25,10);
\draw (15,6)--(20,6); \draw(20,4)--(20,0);
\node at (19.5,5.5) {\small{5}};
\draw (15,8)--(16,8)--(16,6);
\node at (15.5,7.5) {\small{3}};
\draw (20,1)--(23,1)--(23,0);
\node at (22.5, .5) {\small{8}};
\draw (23,4)--(23,1);
\node at (21.5, 1.5) {\small{7}};
\draw (16,8)--(18,8);
\node at (16.5,6.6) {\small{4}};
\node at (17.5, 9.5) {\small{1}};
\draw (23,3)--(25,3);
\node at (24.3, 2.5) {\small{10}};
\node at (23.5, 3.5) {\small{9}};
\node at (20,-1) {$R_l(\Path)$};

\draw (30,0) rectangle (40,10);
\draw (35,10)--(35,8);
\node at (33.5, 8.5) {\small{2}};
\node at (35.5, 8.5) {\small{6}};
\node at (35,-1) {$R_u(\Path)$};

\draw (1,-13) rectangle (9,-5);
\draw (1,-10)--(4,-10)--(4,-13);
\node at (3.5, -10.5) {\small{5}};
\draw (1,-8)--(2,-8)--(2,-10);
\node at (1.5,-8.5) {\small{3}};
\draw(4,-12)--(7,-12)--(7,-13);
\node at (5.5, -12.5) {\small{8}};
\draw (4,-10)--(7,-10)--(7,-12);
\node at (4.5, -11.5) {\small{7}};
\draw (2,-8)--(7,-8)--(7,-10);
\node at (2.5, -9.5) {\small{4}};
\draw (7, -5)--(7,-8);
\node at (6.5, -6.5) {\small{1}};
\draw (7,-6)--(9,-6);
\node at (7.5, -5.5) {\small{9}};
\node at (8.3, -6.5) {\small{10}};
\filldraw  (7, -10) circle (.2cm);
\filldraw (7, -8) circle (.2cm);
\node at (5, -14) {$R_l(\Path)_|=\gamma(\st(538741 \hspace{.15em} \scalebox{0.8}{\raisebox{.08em}{10}} \hspace{.2em} 9))$};
\draw (9.85, -13.85) circle (.62cm);

\draw (16, -13) rectangle (24, -5);
\draw (16, -8)--(18, -8)--(18,-13);
\node at (17.5, -8.5) {\small{5}};
\draw(18, -12)--(20, -12)--(20, -13);
\node at (19.5, -12.5) {\small{8}};
\draw (18, -8)--(20, -8)--(20, -12);
\node at (18.5, -11.5) {\small{7}};
\draw (20, -10)--(24, -10);
\node at (21.5, -10.5) {\small{10}};
\draw (20, -8)--(24,-8);
\node at (20.5, -9.5) {\small{9}};
\draw(16, -6)--(23, -6)--(23, -8);
\node at (22.5, -6.5) {\small{3}};
\draw (23, -6)--(24, -6);
\node at (23.5, -7.5) {\small{4}};
\node at (16.5, -5.5) {\small{1}};
\filldraw  (18, -8) circle (.2cm);
\filldraw(20, -8) circle (.2cm);
\node at (20, -14) {$R_l(\Path)_-=\gamma(\st(587 \hspace{.15em} \scalebox{0.8}{\raisebox{.08em}{10}} \hspace{.2em}  9341))$};
\draw (23.25, -13.92) circle (.62cm);

\draw (34,-10) rectangle (36, -8);
\draw (35, -8)--(35, -10);
\node at (34.5,-8.5) {\small{2}};
\node at (35.5, -9.5){\small{6}};
\node at (35, -11) {$R_u(\Path)_|=R_u(\Path)_-$};
\node at (37.2, -12.3) {$=\gamma(\st(26))$};

\end{tikzpicture}
\caption{Given a good path $\Path$ is a generic rectangulation, we construct the vertical and horizontal completions of $R_l(\Path)$ and $R_u(\Path)$.
}
\label{fig:completions}
\end{figure}
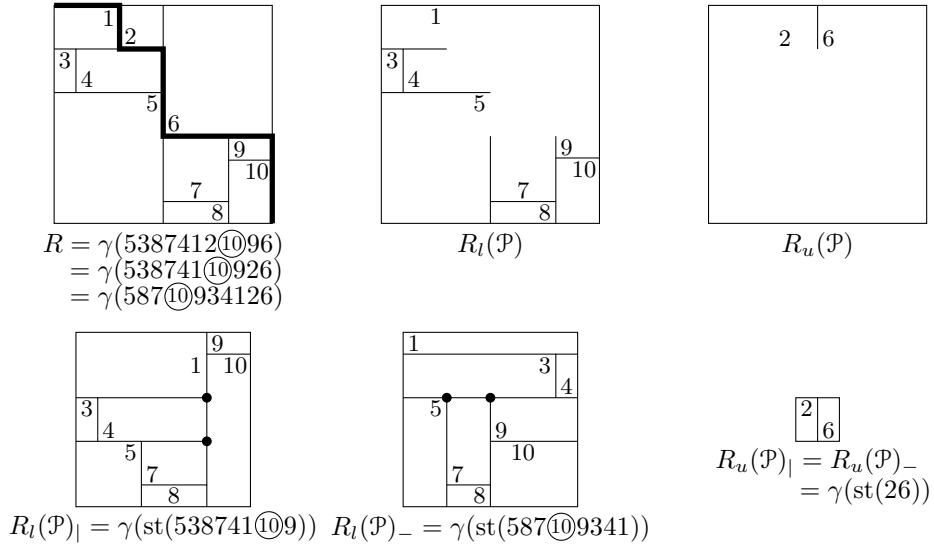

Let $p$ denote the number of rectangles below a good path $\Path$ and $q$ the number of rectangles above $\Path$.
Let $R_l(\Path)$ consist of the edges of $S$ together with the edges of $R$ strictly below~$\Path$ and~$R_u(\Path)$ consist of the edges of $S$ together with the edges of~$R$ strictly above $\Path$, as shown in the example in Figure \ref{fig:completions}.  
We will construct, from $R_l(\Path)$, two generic rectangulations, $R_l(\Path)_|$ and $R_l(\Path)_-$, elements of~$gRec_p$, respectively called the \emph{vertical} and \emph{horizontal completions} of~$R_l(\Path)$.
Similarly, from~$R_u(\Path)$, we will construct  the vertical completion $R_u(\Path)_|$ and horizontal completion $R_u(\Path)_-$, both elements of $gRec_q$.  

The vertical completion~${R_l(\Path)}_|$ is constructed using the following four steps:
\begin{itemize}
\item Each open horizontal edge of $R_l(\Path)$ (i.e. each horizontal edge of $R_l(\Path)$ whose right endpoint lies on $\Path$ in $R$) is extended to the right by $\epsilon$.
\item Each open vertical segment of $R_l(\Path)$ is extended upwards until it meets one of the horizontal edges extended in the previous step or the upper edge of~$S$.  
\item Every horizontal edge extended in the first step is further extended to the right until the extension meets the interior of some vertical edge or the right side of $S$.  Call each new vertex constructed in this step a \emph{constructed vertex}.
\item Along each vertical wall $W$, 
 wall slides changing the order of a vertex of~$R_l(\Path)$ and a constructed vertex are performed until the resulting order meets one of the following conditions: the set of constructed vertices is immediately above the uppermost vertex that is the right endpoint of an edge in $R_l(\Path)$, or if no vertex of $R_l(\Path)$ meets this condition, then wall slides are performed until the constructed vertices are below all other vertices on~$W$. 
\end{itemize}
In the example shown in the lower left diagram of Figure \ref{fig:completions}, the extension of all open horizontal edges of $R_l(\Path)$ in the first step of the construction of $R_l(\Path)_|$ prevents the extension of the left edge of rectangle 7 above the bottom edge of rectangle 4.
In the final step of the construction, wall slides are performed to place the edge separating rectangles 9 and 10 above the constructed vertices (which are enlarged for emphasis). 
After these wall slides, the constructed vertices are immediately above the right endpoint of the edge between rectangles 7 and 8. 
 
The vertical completion $R_u(\Path)_|$ is similarly constructed, extending horizontal edges to the left rather than to the right, vertical edges down rather than up, and performing slides along each vertical wall $W$ containing constructed vertices so that constructed vertices are immediately below the lowermost vertex that is the left endpoint of an edge in $R_u(\Path)$ or, if no such vertex exists, so that the constructed vertices are above all other vertices on $W$.

The constructions of the horizontal completions are similar.
To construct ${R_l(\Path)}_-$:
\begin{itemize} 
\item Extend upwards by~$\epsilon$ every open vertical edge of~$R_l(\Path)$.
\item Extend to the right each open horizontal edge of~$R_l(\Path)$ until the edge meets a vertical edge.
\item Further extend each vertical edge extended in the first step until the extension meets the interior of some horizontal edge or the top of $S$.  
Call the new vertices constructed in this step constructed vertices.
\item Perform wall slides along each horizontal wall $W$ containing the constructed vertices, changing the order of a constructed vertex and a vertex of $R_l(\Path)$ in each wall slide, until all constructed vertices are immediately to the right of the rightmost vertex that is the upper endpoint of an edge in $R_l(\Path)$, or if no vertex of $R_l(\Path)$ meets this condition, until the constructed vertices are to the left of all other vertices on $W$.  
\end{itemize}

An example of $R_l(\Path)_-$ is shown in the middle diagram of the lower row of Figure~\ref{fig:completions}.
Notice that in this diagram, unlike in $R_l(\Path)_|$, the horizontal wall between rectangles 3 and 5 and the horizontal wall between rectangles 3 and 1 are extended until they reach the right side of~$S$.
Since no edges of $R_l(\Path)$ extend down from the horizontal wall $W$ between rectangles 3 and 5, in the final step of the construction, wall slides are performed until the constructed vertices (again enlarged for emphasis) are to the left of the other vertex on~$W$.
 
We construct ${R_u(\Path)}_-$ by extending vertical segments downward, horizontal edges to the left, and performing wall slides along horizontal walls containing constructed vertices so that all constructed vertices are immediately to the left of the leftmost vertex that is the lower endpoint of an edge in $R_u(\Path)$ or, if no such vertex exists, so that the constructed vertices are right of all other vertices on $W$.  

\begin{theorem}
\label{thm:coproduct}
Let $R\in gRec_n$, 
\begin{center} $I_\Path = \sum [{R_l(\Path)}_|, {R_l(\Path)}_-] \text{ and }
J_\Path = \sum[{R_u(\Path)}_|, {R_u(\Path)}_-]. $ \end{center}
where the summations respectively denote the sum of all elements of $gRec_p$ in the interval $[{R_l(\Path)}_|, {R_l(\Path)}_-]$ and the sum of all elements of $gRec_q$ in the interval $[{R_u(\Path)}_|, {R_u(\Path)}_-]$.
Then 
$$\Delta_{gR}(R)=\sum_{\Path \text{ is good}} I_{\Path}\otimes J_{\Path}.$$
\end{theorem}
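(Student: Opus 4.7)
My plan is to transfer the question to the Hopf algebra $Cl^2$ via the isomorphism $\gamma$ and to use the fact that $Cl^2$ is a sub-Hopf-algebra of Malvenuto--Reutenauer. For the unique 2-clumped representative $w^* \in \gamma^{-1}(R) \cap Cl_n^2$, the inherited coproduct is
\[
\Delta_{Cl^2}(w^*) = \sum_{i=0}^{n} \st(w^*_1\cdots w^*_i) \otimes \st(w^*_{i+1}\cdots w^*_n),
\]
and applying $\gamma \otimes \gamma$ produces $\Delta_{gR}(R)$. Because the fibres of $\gamma$ form a lattice congruence on the right weak order, the resulting tensor is independent of the representative chosen; this flexibility is what will allow the terms to be regrouped by good paths.

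The first combinatorial step is to associate a good path of $R$ to each admissible split. Given a representative $w \in \gamma^{-1}(R)$ and a position $i$, the split $w = uv$ partitions the values $[n]$ into $U = \{w_1,\dots,w_i\}$ and $V = \{w_{i+1},\dots,w_n\}$, which label complementary unions of rectangles in $R$. I would show that these unions are separated by a unique staircase path $\Path(w,i)$ from the top-left to the bottom-right corner of $S$, and that the 2-clumped conditions on $\st(u)$ and $\st(v)$ translate, under $\gamma$, into precisely the two forbidden configurations of Figure \ref{fig:path}. Thus admissible splits correspond to good paths of $R$, and every good path arises from some such split.

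Fixing a good path $\Path$, the main step is to identify the multiset of tensors $\gamma(\st(u)) \otimes \gamma(\st(v))$ arising from splits $(w,i)$ with $\Path(w,i) = \Path$ with the product interval $[{R_l(\Path)}_|, {R_l(\Path)}_-] \otimes [{R_u(\Path)}_|, {R_u(\Path)}_-]$. Since $\gamma$ is a lattice quotient, the image in $gRec_p \otimes gRec_q$ of the set of $(u,v)$ realizing $\Path$ is a product of two lattice intervals. I would verify that the minimum on the left is achieved by pushing new edges of $\Path$ upward and then performing the wall-slide normalization prescribed in the construction of ${R_l(\Path)}_|$, while the maximum is achieved by the dual horizontal-extension construction giving ${R_l(\Path)}_-$; the analogous identifications on the right tensor factor follow by the left--right symmetric constructions.

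The main obstacle is the detailed verification that the four-step completion constructions reproduce the lattice extrema of the relevant $\gamma$-image intervals. This requires tracing how each of the $\epsilon$-extension, meeting-extension, and wall-slide normalization steps interacts with the lattice congruence, and in particular that the final placement of constructed vertices reproduces the unique minimal (respectively maximal) congruence-class representative. Once this correspondence is pinned down, regrouping the Malvenuto--Reutenauer sum from the first step by good path and collecting interval contributions yields $\sum_{\Path} I_\Path \otimes J_\Path$, completing the proof.
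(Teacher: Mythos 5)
Your overall route is the same as the paper's---push the computation into $Cl^2$, match admissible splits with good paths, and identify the endpoints of the resulting intervals with the four completion constructions---but two genuine gaps remain. First, your displayed formula for $\Delta_{Cl^2}(w^*)$ is not the coproduct of the relevant basis element: the basis element of the sub-Hopf algebra indexed by $w^*$ is the sum of Malvenuto--Reutenauer fundamental elements over the \emph{entire} fiber $\gamma^{-1}(R)$, so its coproduct is a double sum over all representatives $w$ in the fiber and all split positions, not the splits of $w^*$ alone. Different representatives realize different prefix-sets, so the result is emphatically not ``independent of the representative chosen''; good sets $T$ that occur only as prefix-sets of non-minimal fiber elements would be lost if you split only $w^*$. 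The paper sidesteps re-deriving this regrouping by quoting Law's theorem (Theorem \ref{thm:Shcoprod}), which already packages the coproduct as $\sum_T I_T\otimes J_T$ over good sets $T$ with $I_T=[\st(x_{\min}|_T),\pi_{\downarrow}^2(\st(x_{\max}|_T))]$; if you do not invoke it, you must prove it, and your appeal to ``the image of a lattice quotient is a product of intervals'' does not by itself give the interval endpoints or the correct multiplicities.

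Second, what you call ``the main obstacle'' is in fact the bulk of the proof, and your sketch of it restates the claim rather than arguing it. The paper's verification that $\gamma(\st(x_{\min}|_T))={R_l(\Path)}_|$ runs through the good set poset $P$ of $R$ (Lemma \ref{lemma:P}), the compatibility of the good set poset of ${R_l(\Path)}_|$ with $P$ (Lemma \ref{lemma:compat}), and---the step with no counterpart in your outline---the fact that every $\widetilde{R}\lessdot{R_l(\Path)}_|$ in $gRec_{|T|}$ has a good set poset \emph{incompatible} with $P$ (Lemma \ref{lemma:incompat}); only the combination of these pins down ${R_l(\Path)}_|$ as the minimum of the relevant interval (Lemma \ref{lemma:coprod}). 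Without the incompatibility direction, minimality of the completion is unestablished. The remaining three identities are then deduced from the first via the diagonal reflections $\rf,\rfu$ and the antiautomorphisms $\rp,\rv$ (Lemmas \ref{lemma:rvrfu}--\ref{lemma:coprod2}); your ``left--right symmetric constructions'' should be replaced by these specific symmetries, since for instance ${R_l(\Path)}_-$ is obtained as $\rfu$ applied to a vertical completion of $\rfu(R)$, not by a left--right analogue of the same construction.
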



\section{The Hopf algebra of 2-clumped permutations}
\label{sect:clumped}
In \cite{grec}, Reading proves that generic rectangulations are in bijection with 2-clumped permutations.
To define $k$-clumped permutations, and in particular the 2-clumped permutations needed in this paper, we first define pattern avoidance. 
Let $p=p_1 \cdots p_l \in S_l$ and $\tilde{p}$ be obtained by inserting a single dash between some adjacent entries of $p$.
We say that a permutation $y\in S_n$ \emph{contains the pattern} $\tilde{p}$ if there exists some subsequence $y_{i_1} \cdots y_{i_l}$ of $y$ with the following two properties.
First, the relative order of the terms in the subsequence matches the relative order of the entries of $p$, i.e. for all $j,k\in [l]$, we have that $y_{i_j}<y_{i_k}$ if and only if $p_j<p_k$.
Secondly, if~$p_j$ and $p_{j+1}$ are not separated by a dash in $\tilde{p}$, then $i_j=i_{j+1}-1$, i.e.~$y_{i_j}$ and~$y_{i_{j+1}}$ are adjacent in $y$.
If $y$ does not contain the pattern $p$, we say that $y$ \emph{avoids}~$p$.
For example, consider $y=546312 \in S_6$.
The subsequence $5612$ is an occurrence of the pattern $3$-$4$-$1$-$2$ in~$y$, but is not an occurrence of the pattern $3$-$41$-$2$ since the~$6$ and $1$ are non-adjacent in $y$.

A pair $y_{i}, y_{i+1}$ of some $y\in S_n$ is a \emph{descent} of $y$ if $y_i>y_{i+1}$. 
For every descent of $y$, we define a \emph{clump} to be a maximal set of consecutive values $a, a+1, ..., b$ with $y_{i+1}<a<b<y_i$ such that in $y$ either all elements of $\{a, a+1, ...,b\}$ occur to the left of the descent or all elements of $\{a, a+1, ..., b\}$ occur to the right of the descent.
The pair $92$ is a descent of the permutation $167439285$.
Four clumps are associated with this descent, $\{3,4\}, \{5\}, \{6,7\},$ and $\{8\}$.
A permutation~$y$ is a \emph{$k$-clumped} permutation if every descent of $y$ has at most $k$ associated clumps.
The permutation~$167439285$ is $k$-clumped for any $k\geq 4$ because four clumps are associated with the descent $92$ and fewer clumps are associated with any other descent of the permutation.

Permutations that avoid the patterns $\{2$-$31, 31$-$2\}$ are $0$-clumped permutations. 
Every descent $y_iy_{i+1}$ in a $0$-clumped permutation satisfies $y_i-y_{i+1}=1$.
There is a bijection between $0$-clumped permutations in $S_n$ and compositions of $n$.
To find the composition of $n$ that corresponds to the $0$-clumped permutation $\sigma=\sigma_1 \cdots \sigma_n$, use~$\sigma$ to record a sequence of pluses and commas.  
Specifically, if $\sigma_i>\sigma_{i+1}$, then the $i^{\text{th}}$ entry of the sequence is a plus.  
Otherwise, the $i^{\text{th}}$ entry of the sequence is a comma.
For example, the permutation $217654398$ corresponds to the sequence $+,++++,+$.
Inserting a 1 between each pair of consecutive entries of this sequence, we obtain $1+1,1+1+1+1+1,1+1$ or the composition $2,5,2$.
In~\cite{drec}, twisted Baxter permutations, permutations that avoid the patterns \mbox{$\{2$-$41$-$3,$} \mbox{$3$-$41$-$2\}$,} are shown to be in bijection with diagonal rectangulations, defined in Section \ref{sect:gamma}.
The twisted Baxter permutations are exactly the 1-clumped permutations. 
The permutations considered in this paper avoid the set of patterns $\theta= \mbox{\{2\text{-}4\text{-}51\text{-}3}, \mbox{4\text{-}2\text{-}51\text{-}3}, 3$-$51$-$2$-$4$, \mbox{$3$-$51$-$4$-$2\}$} and are called 2-clumped permutations.  
For $m,n \in \mathbb{Z}_{\geq 0}$, let $Cl_n^m$ denote the subset of $S_n$ containing all $m$-clumped permutations.
Define $V$ to be the set of all even natural numbers strictly between 1 and $m+3$ and $V^C$ to be the set of all odd natural numbers strictly between 1 and $m+3$. 
We say that $x\in S_n$ avoids $V$-$(m+3)1$-$V^C$ if and only if $x$ avoids $v_1$-$\cdots$-$v_i$-$(m+3) 1$-$v_1'$-$\cdots$-$v_j'$ where $v_1\cdots v_i$ is any permutation of the elements of $V$ and $v_1' \cdots v_j'$ is any permutation of the elements of~$V^C$.
Then $x\in Cl^{m}_n$ if and only if $x\in S_n$ that avoids the patterns $\{V$-$(m+3)1$-$V^C, V^C$-$(m+3)1$-$V\}$.
The union of the elements of $Cl^m_n$ for all $n\in \mathbb{N}$ forms a basis for a Hopf algebra that we call the Hopf algebra of $m$-clumped permutations \cite[Corollary 1.4, Theorem 9.4]{Reading}.

In $y\in S_n$, let $y_{i_1}\cdots y_{i_{m+3}}$ be an occurrence of the pattern $V$-$(m+3)1$-$V^C$ where $y_{i_j}$ and~$y_{i_{j+1}}$ are respectively the $``m+3"$ and $``1"$ of the pattern.
If the permutation $x$ is obtained by switching~$y_{i_j}$ and $y_{i_{j+1}}$ in $y$, then we say that $x$ is obtained from $y$ by a ($V(m+3)1V^C\to V1(m+3)V^C$) \emph{move}.
We similarly define a ($V1(m+3)V^C \to V(m+3)1V^C$) \emph{move}, a ($V^C(m+3)1V \to V^C1(m+3)V$) \emph{move}, and a ($V^C1(m+3)V \to V^C(m+3)1V$) \emph{move}. 
Define $\pi_{\downarrow}^m: S_n \to Cl_n^m$ by $\pi_{\downarrow}^m(y)=x$ if and only if $x$ is the minimal element with respect to the right weak order on $S_n$ that can be obtained from $y$ using a sequence of ($V(m+3)1V^C\to V1(m+3)V^C$) moves and ($V^C(m+3)1V \to V^C1(m+3)V$) moves. 
Such a unique minimal element exists because the map $\pi_{\downarrow}^m$ defines a lattice congruence on the right weak order in which $\pi_{\downarrow}^m(x)\neq x$ if and only if $x$ contains an occurrence of $V(m+3)1V^C$ or $V^C(m+3)1V$ \cite[Theorem 9.3]{Reading}.
 Every congruence class of a lattice congruence on the right weak order is an interval.

Having described the basis elements in the Hopf algebra of $m$-clumped permutations, we now focus on the Hopf algebra of $2$-clumped permutations and describe the operations $\bullet_{Cl^2}$ and $\Delta_{Cl^2}$.
To describe $\bullet_{Cl^2}$, we provide an additional definition:
Let $x\in Cl_p^2$ and $y=y_1 \cdots y_q\in Cl_q^2$. 
Define $y'$ to be the sequence $y_1'\cdots y_q'$ where $y_i'=y_i+p$ for all $i\in [q]$. 
The concatenation of $x$ and $y'$ is denoted by $xy'$.

Specializing \cite[Equation 6]{Shirley} to the Hopf algebra of 2-clumped permutations, we obtain:
\begin{equation*} x \bullet_{Cl^2} y =\sum [xy',\pi_{\downarrow}^2(y'x)] \end{equation*} where the summation denotes the sum of all elements of the right weak order restricted to~$Cl_{p+q}^2$.  
We observe that $y'x \in Cl_{p+q}^2$ so $\pi_{\downarrow}^2(y'x)=y'x$.
We will use the following corollary to prove Theorem \ref{thm:product} in Section \ref{sect:product}.

\begin{corollary}\label{cor:prod}
Let $x \in Cl^2_p$ and $y\in Cl^2_q$.
Then \begin{equation*}\label{eq:prod}x \bullet_{Cl^2} y =\sum [xy',y'x]. \end{equation*} 
\end{corollary}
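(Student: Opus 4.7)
The plan is to derive the corollary by direct substitution into the preceding formula
\[x \bullet_{Cl^2} y = \sum [xy', \pi_{\downarrow}^2(y'x)]\]
using the text's observation that $\pi_{\downarrow}^2(y'x) = y'x$. Since the text records that $\pi_{\downarrow}^2(z) \neq z$ precisely when $z$ contains one of the forbidden patterns $V(m{+}3)1V^C$ or $V^C(m{+}3)1V$, it suffices to verify that $y'x$ lies in $Cl^2_{p+q}$, after which the corollary is immediate.

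To verify $y'x \in Cl^2_{p+q}$, I would show that it avoids each of the four patterns in $\theta = \{2\text{-}4\text{-}51\text{-}3,\, 4\text{-}2\text{-}51\text{-}3,\, 3\text{-}51\text{-}2\text{-}4,\, 3\text{-}51\text{-}4\text{-}2\}$. The key structural observation is that in $y'x$, the entries of $y'$ occupy the first $q$ positions and all have values exceeding $p$, while the entries of $x$ occupy the last $p$ positions and have values at most $p$; thus every entry coming from $y'$ lies to the left of, and strictly exceeds in value, every entry coming from $x$.

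Suppose toward contradiction that $z_1,\ldots,z_5$ is an occurrence in $y'x$ of some pattern in $\theta$, and let $k$ be the number of $z_i$ lying in $y'$. Then $z_1,\ldots,z_k$ lie in $y'$ and $z_{k+1},\ldots,z_5$ lie in $x$, and because $y'$-values dominate $x$-values, the ranks of $z_1,\ldots,z_k$ in $\{1,\ldots,5\}$ must be precisely the top $k$ ranks. The extreme cases $k = 5$ and $k = 0$ force the pattern to appear in $y$ or in $x$, contradicting their 2-clumpedness. For each intermediate $k \in \{1,2,3,4\}$ and each of the four patterns, one checks directly that the rank set of the first $k$ pattern positions is not the top $k$ ranks. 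For instance, in $2\text{-}4\text{-}51\text{-}3$ the first three pattern entries have rank set $\{2,4,5\} \neq \{3,4,5\}$, and analogous mismatches dispose of the remaining fifteen cases.

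The case analysis is essentially automatic because every pattern in $\theta$ is constructed so that both the maximum rank $5$ and the minimum rank $1$ occur at interior positions, preventing any proper prefix from realizing the top ranks; the only obstacle is organizing the $4 \times 4$ bookkeeping cleanly. Once $y'x \in Cl^2_{p+q}$ is established, the identity $\pi_{\downarrow}^2(y'x) = y'x$ substitutes into the Shirley formula to give the corollary.
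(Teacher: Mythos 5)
Your proposal is correct and takes essentially the same route as the paper: the corollary is obtained by substituting $\pi_{\downarrow}^2(y'x)=y'x$ into the specialization of Law's product formula, which is exactly what the paper does. The paper merely asserts the observation that $y'x\in Cl^2_{p+q}$, whereas you supply the (correct) pattern-avoidance verification that every straddling occurrence would force a proper prefix of the pattern to carry the top ranks, which none of the four patterns in $\theta$ permits.
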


We now define terms necessary to describe $\Delta_{Cl^2}$.
Given a sequence $a=a_1 \cdots a_n$, of distinct integers, we define the standardization of $a$, denoted by $\st(a)$, to be the unique permutation $x=x_1\cdots x_n\in S_n$ that respects the ordering of the entries of~$a$.
That is, $x_i<x_j$ if and only if $a_i<a_j$.

Let $x\in Cl_n^2$. 
 We say that a subset $T \subseteq [n]$ is \emph{good with respect to x} if there exists some permutation $x'=x_1' \cdots x_n'\in S_n$ such that $\pi_{\downarrow}^2(x')=x$ and~$T=\{x_1',...,x_{|T|}'\}$.
 Given a good set $T$ such that $|T|=p$ and $q=n-p$, let $x_{\min}$ be the minimal element of the right weak order on $S_n$ such that $\pi_\downarrow^2(x_{\min})=x$ and the first $p$ entries of $x_{\min}$ are the elements of $T$.
 Notice that $x_{\min}$ depends on both $x$ and the selected set $T$ which is good with respect to $x$.
 Define $x_{\min}|_T$ to be the ordering of the elements of~$T$ as they appear in $x_{\min}$.
 Let $x_{\max}$ be the maximal element of the right weak order on $S_n$ such that $\pi_{\downarrow}^2(x_{\max})=x$ and the first~$p$ entries of $x_{\max}$ are the elements of $T$.
 The ordering of the elements of $T$ as they appear in $x_{max}$ is denoted by $x_{\max}|_T$.
 Letting $T^C=[n]-T$, we similarly define $x_{\min}|_{T^C}$ and $x_{\max}|_{T^C}$.
 The following theorem, which will be used to prove Theorem \ref{thm:coproduct} in Section \ref{sect:product}, is a specialization of \cite[Theorem 1.3]{Shirley}.

\begin{theorem}\label{thm:Shcoprod}
Given $x\in Cl_n^2$,
\begin{equation*}\label{eqn:coprod}
\Delta_{Cl^2}(x)=\sum_{T \text{ is good}} I_T \otimes J_T
\end{equation*}
where $I_T$ is the sum of the elements in the interval $[\st(x_{\min}|_T), \pi_{\downarrow}^2 (\st(x_{\max}|_T))]$ of the right weak order on $S_p$ restricted to $Cl_p^2$ and $J_T$ is the sum of elements in the interval $[\st(x_{\min}|_{T^C}), \pi_{\downarrow}^2 (\st(x_{\max}|_{T^C}))]$ of the right weak order on $S_q$ restricted to~$Cl_q^2$. 
 \end{theorem}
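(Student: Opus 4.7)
The plan is to derive the stated formula by specializing Shirley's general coproduct formula \cite[Theorem 1.3]{Shirley}, which applies to any Hopf subalgebra of the Malvenuto--Reutenauer Hopf algebra arising from a lattice congruence on the right weak order whose congruence classes are intervals. The hypotheses for this specialization have already been collected in the preceding exposition: by \cite[Theorem 9.3]{Reading} the map $\pi_{\downarrow}^2$ is a lattice congruence on the right weak order with interval congruence classes, and by \cite[Corollary 1.4, Theorem 9.4]{Reading} the 2-clumped permutations span a Hopf subalgebra. Thus the theorem reduces to invoking Shirley's result and translating its notation into the language of good sets and $x_{\min}$, $x_{\max}$ used here.

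First I would recall the Malvenuto--Reutenauer coproduct $\Delta(w) = \sum_{k=0}^n \st(w_1 \cdots w_k) \otimes \st(w_{k+1} \cdots w_n)$, and write the coproduct on $Cl^2$ as the image of this coproduct under the quotient map defined by $\pi_{\downarrow}^2$. For a given $x \in Cl_n^2$, I would sum the Malvenuto--Reutenauer coproduct over all $x' \in (\pi_{\downarrow}^2)^{-1}(x)$ and regroup the terms by the set $T=\{x_1', \ldots, x_p'\}$ appearing on the left of the tensor. The terms contributing nontrivially correspond exactly to those $T$ that are good with respect to $x$, which matches the outer sum in the statement.

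The key combinatorial step is to identify, for each good $T$, the collection $\{x' \in (\pi_{\downarrow}^2)^{-1}(x) : \{x_1', \ldots, x_p'\} = T\}$ as an interval in the right weak order. This follows because the congruence class $(\pi_{\downarrow}^2)^{-1}(x)$ is itself an interval, and the cover relations that remain within the subset with prescribed first-$p$-entry-set $T$ are exactly those that swap two adjacent entries lying both in $T$ or both in $T^C$; the minimum and maximum of this subinterval are precisely $x_{\min}$ and $x_{\max}$ as defined. Standardizing the first $p$ and last $q$ positions of each $x'$ in this subinterval and then applying $\pi_{\downarrow}^2$ in each tensor factor produces the product of intervals in $Cl_p^2 \times Cl_q^2$ described in the theorem, because standardization and $\pi_{\downarrow}^2$ are order-preserving and carry covers to covers or equalities.

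The main obstacle will be confirming that the quotient map and the projections $w \mapsto \st(w|_T)$, $w \mapsto \st(w|_{T^C})$ interact correctly, namely that the image of the interval $[x_{\min}, x_{\max}]$ under these projections, followed by $\pi_{\downarrow}^2$, is exactly the product interval $[\st(x_{\min}|_T), \pi_{\downarrow}^2(\st(x_{\max}|_T))] \times [\st(x_{\min}|_{T^C}), \pi_{\downarrow}^2(\st(x_{\max}|_{T^C}))]$, with each element in the product appearing the correct number of times so that the sum is as claimed. This compatibility is precisely the content of Shirley's theorem in its general form; the remaining work is to check that the notation in \cite[Theorem 1.3]{Shirley} matches the definitions of good set, $x_{\min}|_T$, and $x_{\max}|_T$ given above.
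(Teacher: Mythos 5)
Your proposal takes the same route as the paper, which offers no independent proof but simply states the theorem as a specialization of \cite[Theorem 1.3]{Shirley}, relying on the facts (already recorded in Section \ref{sect:clumped}) that $\pi_{\downarrow}^2$ defines a lattice congruence with interval classes and that the 2-clumped permutations span a Hopf subalgebra. Your additional sketch of how the Malvenuto--Reutenauer coproduct restricts to the sum over a fiber, regroups by good sets $T$, and projects onto the product of intervals is a reasonable outline of what Shirley's general argument does, and it is consistent with the definitions of $x_{\min}$, $x_{\max}$, and good sets used here.
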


\section{The map from permutations to generic rectangulations}
\label{sect:gamma}
Having defined 2-clumped permutations, we now describe the map $\gamma$ from permutations to generic rectangulations which restricts to a bijection between 2-clumped permutations and generic rectangulations.
The map $\gamma: S_n \to gRec_n$ is described in \cite[Section 3]{grec} in two parts: we first describe a map $\rho$ from $S_n$ to the set $dRec_n$ of diagonal rectangulations of size~$n$, and then we perform wall slides to obtain an element of $gRec_n$.
As with generic rectangulations, we consider diagonal rectangulations up to combinatorial equivalence.
A \emph{diagonal rectangulation}~$D\in dRec_n$ is a tiling of the square $S$ with $n$ rectangles such that, for some representative of the rectangulation, the interior of each rectangle of the tiling intersects the diagonal of~$S$ extending from the upper-left corner to the lower-right corner of $S$.
Diagonal rectangulations of size $n$ form a subset of generic rectangulations of size $n$.
 Specifically, a generic rectangulation $R$ is a diagonal rectangulation if and only if there exists some generic rectangulation $R'$ combinatorially equivalent to $R$ such that each rectangle of~$R'$ intersects the upper/left to lower/right diagonal of $R'$.  

Let $x=x_1 \cdots x_n \in S_n$.  
To construct the diagonal rectangulation $\rho(x)$, write~1 through~$n$ along the diagonal of $S$ starting with 1 in the upper-left corner of $S$ and finishing with~$n$ in the lower-right corner.
Place dots along the diagonal between every pair of adjacent numbers.
Each of these numbers will be contained in the interior of some rectangle of $\rho(x)$ (and of $\gamma(x)$), with exactly one number in each rectangle.
We refer to the rectangle containing $i$ as \emph{rectangle $i$}. 
Reading~$x$ from left to right, we construct~$\rho(x)$ recursively as illustrated by the construction of $\rho(53417286)$ in Figure~\ref{fig:rho}.
Let $T_{i-1}$ denote the union of the rectangles labeled by elements of $\{x_1,...,x_{i-1}\}$ and the bottom and left side of $S$.  
Then~$T_{i}$ is constructed using~$T_{i-1}$ and $x_{i}$.
Let $p$ be the dot directly below and right of label $x_i$.
If $p$ is contained in~$T_{i-1}$, then the lower-right vertex of rectangle $x_i$ is the rightmost point of $T_{i-1}$ directly right of $p$.  
If $p$ is not contained in a segment of $T_{i-1}$, then the lower-right vertex of rectangle~$x_i$ is the uppermost point of $T_{i-1}$ directly below $p$.
Let $p'$ be the dot directly above and left of label $x_i$. 
If $p'$ is contained in $T_{i-1}$, then the upper-left vertex of rectangle $x_i$ is the uppermost point of $T_{i-1}$ directly above $p'$.  
If~$p'$ is not contained in a segment of $T_{i-1}$, then the upper-left vertex of rectangle~$x_i$ is the rightmost point of $T_{i-1}$ directly to the left of $p'$.  

In the example shown in Figure \ref{fig:rho}, since the dot $p$ directly below and right of $x_2=3$ is \emph{not} contained in $T_1$, the lower-right vertex of rectangle 3 is the uppermost point of $T_1$ directly below $p$.
Since the dot $p'$ directly above and left of $x_3=4$ is contained in $T_2$, the upper-left vertex of rectangle 4 is the uppermost point of $T_2$ directly above $p'$.

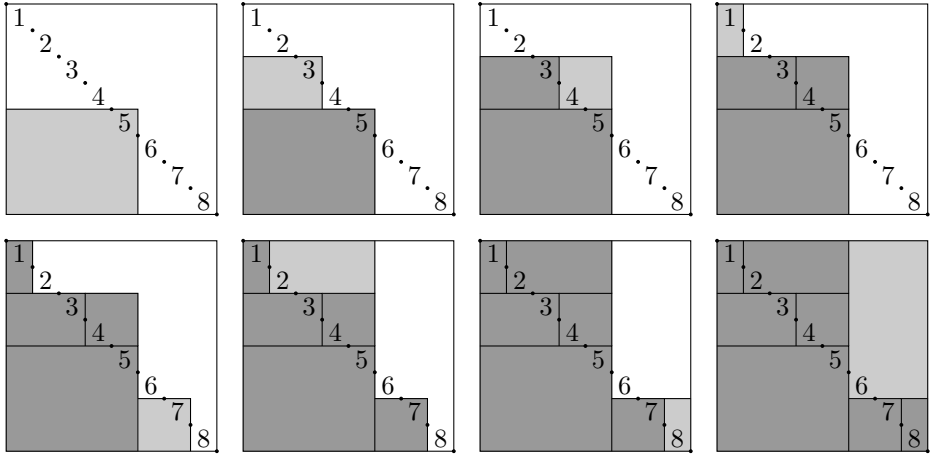
\begin{figure}
\begin{tikzpicture}[scale=.35]
\filldraw[light-gray] (0,0) rectangle (5,4);
\draw(0,0) rectangle (8,8);
\filldraw(0,8) circle [radius=1.5pt] ;
\filldraw(1,7) circle [radius=1.5pt] ;
\filldraw(2,6) circle [radius=1.5pt] ;
\filldraw(3,5) circle [radius=1.5pt] ;
\filldraw(4,4) circle [radius=1.5pt] ;
\filldraw(5,3) circle [radius=1.5pt] ;
\filldraw(6,2) circle [radius=1.5pt] ;
\filldraw(7,1) circle [radius=1.5pt] ;
\filldraw(8,0) circle [radius=1.5pt] ;
\node at (.5,7.5) {$1$};
\node at (1.5,6.5) {$2$};
\node at (2.5,5.5) {$3$};
\node at (3.5,4.5) {$4$};
\node at (4.5,3.5) {$5$};
\node at (5.5,2.5) {$6$};
\node at (6.5,1.5) {$7$};
\node at (7.5,.5) {$8$};
\draw(0,4)--(5,4)--(5,0);

\filldraw[mgray] (9,0) rectangle (14,4);
\filldraw[light-gray] (9, 4) rectangle (12, 6);
\draw(9,0) rectangle (17,8);
\filldraw(9,8) circle [radius=1.5pt] ;
\filldraw(10,7) circle [radius=1.5pt] ;
\filldraw(11,6) circle [radius=1.5pt] ;
\filldraw(12,5) circle [radius=1.5pt] ;
\filldraw(13,4) circle [radius=1.5pt] ;
\filldraw(14,3) circle [radius=1.5pt] ;
\filldraw(15,2) circle [radius=1.5pt] ;
\filldraw(16,1) circle [radius=1.5pt] ;
\filldraw(17,0) circle [radius=1.5pt] ;
\node at (9.5,7.5) {$1$};
\node at (10.5,6.5) {$2$};
\node at (11.5,5.5) {$3$};
\node at (12.5,4.5) {$4$};
\node at (13.5,3.5) {$5$};
\node at (14.5,2.5) {$6$};
\node at (15.5,1.5) {$7$};
\node at (16.5,.5) {$8$};
\draw(9,4)--(14,4)--(14,0);
\draw(9,6)--(12, 6)--(12, 4);

\filldraw[mgray] (18,0) rectangle (23,4);
\filldraw[mgray] (18, 4) rectangle (21, 6);
\filldraw[light-gray] (21,4) rectangle (23,6);
\draw(18,0) rectangle (26,8);
\filldraw(18,8) circle [radius=1.5pt] ;
\filldraw(19,7) circle [radius=1.5pt] ;
\filldraw(20,6) circle [radius=1.5pt] ;
\filldraw(21,5) circle [radius=1.5pt] ;
\filldraw(22,4) circle [radius=1.5pt] ;
\filldraw(23,3) circle [radius=1.5pt] ;
\filldraw(24,2) circle [radius=1.5pt] ;
\filldraw(25,1) circle [radius=1.5pt] ;
\filldraw(26,0) circle [radius=1.5pt] ;
\node at (18.5,7.5) {$1$};
\node at (19.5,6.5) {$2$};
\node at (20.5,5.5) {$3$};
\node at (21.5,4.5) {$4$};
\node at (22.5,3.5) {$5$};
\node at (23.5,2.5) {$6$};
\node at (24.5,1.5) {$7$};
\node at (25.5,.5) {$8$};
\draw(18,4)--(23,4)--(23,0);
\draw(18,6)--(21, 6)--(21, 4);
\draw(21, 6)--(23, 6)--(23, 4);

\filldraw[mgray] (27,0) rectangle (32,4);
\filldraw[mgray] (27, 4) rectangle (30, 6);
\filldraw[mgray] (30,4) rectangle (32, 6);
\filldraw[light-gray] (27, 6) rectangle (28, 8);
\draw(27,0) rectangle (35,8);
\filldraw(27,8) circle [radius=1.5pt] ;
\filldraw(28,7) circle [radius=1.5pt] ;
\filldraw(29,6) circle [radius=1.5pt] ;
\filldraw(30,5) circle [radius=1.5pt] ;
\filldraw(31,4) circle [radius=1.5pt] ;
\filldraw(32,3) circle [radius=1.5pt] ;
\filldraw(33,2) circle [radius=1.5pt] ;
\filldraw(34,1) circle [radius=1.5pt] ;
\filldraw(35,0) circle [radius=1.5pt] ;
\node at (27.5,7.5) {$1$};
\node at (28.5,6.5) {$2$};
\node at (29.5,5.5) {$3$};
\node at (30.5,4.5) {$4$};
\node at (31.5,3.5) {$5$};
\node at (32.5,2.5) {$6$};
\node at (33.5,1.5) {$7$};
\node at (34.5,.5) {$8$};
\draw(27,4)--(32,4)--(32,0);
\draw(27,6)--(30, 6)--(30, 4);
\draw(30, 6)--(32, 6)--(32, 4);
\draw(28,8)--(28,6);
\filldraw[mgray] (0,-9) rectangle (5,-5);
\filldraw[mgray] (0, -5) rectangle (3, -3);
\filldraw[light-gray] (5,-9) rectangle (7, -7);
\filldraw[mgray] (3, -5) rectangle (5,-3);
\filldraw[mgray] (0,-3) rectangle (1,-1);
\draw(0,-9) rectangle (8,-1);
\filldraw(0,-1) circle [radius=1.5pt] ;
\filldraw(1,-2) circle [radius=1.5pt] ;
\filldraw(2,-3) circle [radius=1.5pt] ;
\filldraw(3,-4) circle [radius=1.5pt] ;
\filldraw(4,-5) circle [radius=1.5pt] ;
\filldraw(5,-6) circle [radius=1.5pt] ;
\filldraw(6,-7) circle [radius=1.5pt] ;
\filldraw(7,-8) circle [radius=1.5pt] ;
\filldraw(8,-9) circle [radius=1.5pt] ;
\node at (.5,-1.5) {$1$};
\node at (1.5,-2.5) {$2$};
\node at (2.5,-3.5) {$3$};
\node at (3.5,-4.5) {$4$};
\node at (4.5,-5.5) {$5$};
\node at (5.5,-6.5) {$6$};
\node at (6.5,-7.5) {$7$};
\node at (7.5,-8.5) {$8$};
\draw(0,-5)--(5,-5)--(5,-9);
\draw(0,-3)--(3, -3)--(3, -5);
\draw(5, -7)--(7, -7)--(7, -9);
\draw(3,-3)--(5, -3)--(5, -5);
\draw (1,-3)--(1,-1);
\filldraw[mgray] (9,-9) rectangle (14,-5);
\filldraw[mgray] (9, -5) rectangle (12, -3);
\filldraw[mgray] (14,-9) rectangle (16, -7);
\filldraw[mgray] (12, -5) rectangle (14,-3);
\filldraw[mgray] (9,-3) rectangle (10,-1);
\filldraw[light-gray] (10, -3) rectangle (14, -1);
\draw(9,-9) rectangle (17,-1);
\filldraw(9,-1) circle [radius=1.5pt] ;
\filldraw(10,-2) circle [radius=1.5pt] ;
\filldraw(11,-3) circle [radius=1.5pt] ;
\filldraw(12,-4) circle [radius=1.5pt] ;
\filldraw(13,-5) circle [radius=1.5pt] ;
\filldraw(14,-6) circle [radius=1.5pt] ;
\filldraw(15,-7) circle [radius=1.5pt] ;
\filldraw(16,-8) circle [radius=1.5pt] ;
\filldraw(17,-9) circle [radius=1.5pt] ;
\node at (9.5,-1.5) {$1$};
\node at (10.5,-2.5) {$2$};
\node at (11.5,-3.5) {$3$};
\node at (12.5,-4.5) {$4$};
\node at (13.5,-5.5) {$5$};
\node at (14.5,-6.5) {$6$};
\node at (15.5,-7.5) {$7$};
\node at (16.5,-8.5) {$8$};
\draw(9,-5)--(14,-5)--(14,-9);
\draw(9,-3)--(12, -3)--(12, -5);
\draw(14, -7)--(16, -7)--(16, -9);
\draw(12,-3)--(14, -3)--(14, -5);
\draw (10,-3)--(10,-1);
\draw(14, -3) --(14, -1);
\filldraw[mgray] (18,-9) rectangle (23,-5);
\filldraw[mgray] (18, -5) rectangle (21, -3);
\filldraw[mgray] (23,-9) rectangle (25, -7);
\filldraw[mgray] (21, -5) rectangle (23,-3);
\filldraw[mgray] (18,-3) rectangle (19,-1);
\filldraw[mgray] (19, -3) rectangle (23, -1);
\filldraw[light-gray] (25, -9) rectangle (26, -7);
\draw(18,-9) rectangle (26,-1);
\filldraw(18,-1) circle [radius=1.5pt] ;
\filldraw(19,-2) circle [radius=1.5pt] ;
\filldraw(20,-3) circle [radius=1.5pt] ;
\filldraw(21,-4) circle [radius=1.5pt] ;
\filldraw(22,-5) circle [radius=1.5pt] ;
\filldraw(23,-6) circle [radius=1.5pt] ;
\filldraw(24,-7) circle [radius=1.5pt] ;
\filldraw(25,-8) circle [radius=1.5pt] ;
\filldraw(26,-9) circle [radius=1.5pt] ;
\node at (18.5,-1.5) {$1$};
\node at (19.5,-2.5) {$2$};
\node at (20.5,-3.5) {$3$};
\node at (21.5,-4.5) {$4$};
\node at (22.5,-5.5) {$5$};
\node at (23.5,-6.5) {$6$};
\node at (24.5,-7.5) {$7$};
\node at (25.5,-8.5) {$8$};
\draw(18,-5)--(23,-5)--(23,-9);
\draw(18,-3)--(21, -3)--(21, -5);
\draw(23, -7)--(25, -7)--(25, -9);
\draw(21,-3)--(23, -3)--(23, -5);
\draw (19,-3)--(19,-1);
\draw(23, -3) --(23, -1);
\draw(25, -7)--(26, -7);
\filldraw[mgray] (27,-9) rectangle (32,-5);
\filldraw[mgray] (27, -5) rectangle (30, -3);
\filldraw[mgray] (32,-9) rectangle (34, -7);
\filldraw[mgray] (30, -5) rectangle (32,-3);
\filldraw[mgray] (27,-3) rectangle (28,-1);
\filldraw[mgray] (28, -3) rectangle (32, -1);
\filldraw[mgray] (34, -9) rectangle (35, -7);
\filldraw[light-gray](32, -7) rectangle (35, -1);
\draw(27,-9) rectangle (35,-1);
\filldraw(27,-1) circle [radius=1.5pt] ;
\filldraw(28,-2) circle [radius=1.5pt] ;
\filldraw(29,-3) circle [radius=1.5pt] ;
\filldraw(30,-4) circle [radius=1.5pt] ;
\filldraw(31,-5) circle [radius=1.5pt] ;
\filldraw(32,-6) circle [radius=1.5pt] ;
\filldraw(33,-7) circle [radius=1.5pt] ;
\filldraw(34,-8) circle [radius=1.5pt] ;
\filldraw(35,-9) circle [radius=1.5pt] ;
\node at (27.5,-1.5) {$1$};
\node at (28.5,-2.5) {$2$};
\node at (29.5,-3.5) {$3$};
\node at (30.5,-4.5) {$4$};
\node at (31.5,-5.5) {$5$};
\node at (32.5,-6.5) {$6$};
\node at (33.5,-7.5) {$7$};
\node at (34.5,-8.5) {$8$};
\draw(27,-5)--(32,-5)--(32,-9);
\draw(27,-3)--(30, -3)--(30, -5);
\draw(32, -7)--(34, -7)--(34, -9);
\draw(30,-3)--(32, -3)--(32, -5);
\draw (28,-3)--(28,-1);
\draw(32, -3) --(32, -1);
\draw(34, -7)--(35, -7);
\end{tikzpicture}
\caption{The steps in the construction of $\rho(53417286)$.}
\label{fig:rho}
\end{figure}

The generic rectangulation $\gamma(x)$ is obtained from $\rho(x)$ by performing wall slides along the interior walls of $\rho(x)$.
An example is shown in Figure \ref{fig:gamma}.
For each interior wall $W$ of~$\rho(x)$, record a subsequence~$\sigma_W$ of $x$ consisting of the labels of rectangles adjacent to $W$.
We call~$\sigma_W$ the \emph{wall shuffle of $W$}.
For each wall $W$ of~$\rho(x)$, we temporarily label the vertices on $W$ using the rectangles adjacent to $W$ (as described below), and then use $\sigma_W$ and the labeling to determine which wall slides should be performed to obtain~$\gamma(x)$.
Every vertex on an interior wall $W$ is either the lower-right vertex or the upper-left vertex of some rectangle.
Note that no vertex of a diagonal rectagnulation is both the lower-right vertex of a rectangle and the upper-left vertex of a rectangle.
Thus the labeling described below will result in a total ordering of the entries of $\sigma_W$.
If the vertex is the lower-right vertex of some rectangle~$x_i$, then label the vertex with $x_i$.
Otherwise, the vertex is the upper-left vertex of some rectangle~$x_j$ and we label the vertex with $x_j$.
If~$W$ is a vertical wall, we perform wall slides so that the bottom to top order of the labeled vertices on~$W$ coincides with $\sigma_W$.
Since each vertical wall slide switches the order of a wall that extends to the \emph{left} of $W$ and a wall that extends to the \emph{right} of $W$, we explain why it is always possible to perform a sequence of wall slides so that the bottom to top order of the vertices agrees with $\sigma_W$.
Each vertex on $W$ that is the lower-right vertex of some rectangle is the endpoint of an edge extending to the left of $W$ and each vertex of~$W$ that is the upper-left vertex of some rectangle is the endpoint of an edge extending right of~$W$.
By the construction of~$\rho(x)$, the subsequence of~$\sigma_W$ consisting of the lower-right corner vertices on $W$ and the subsequence of~$\sigma_W$ consisting of upper-left corner vertices on~$W$ both agree with the bottom to top ordering of these vertices along~$W$, so it is possible to perform a sequence of wall slides to obtain the desired vertex order.
If~$W$ is a horizontal wall, we perform wall slides so that the left to right order of the labeled vertices on $W$ coincides with $\sigma_W.$
A similar argument shows that the desired vertex order can be obtained by some sequence of horizontal wall slides. 
Our definition of a wall shuffle differs from the definition given in \cite{grec}.  
The wall shuffles defined there can be obtained by deleting the first and last entries of the wall shuffles defined here.
This difference does not affect the definition of $\gamma$.

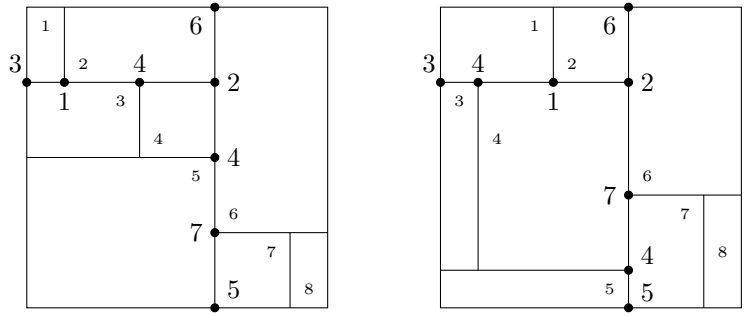
\begin{figure}
\begin{tikzpicture}[scale=.5]
\draw(18,-9) rectangle (26,-1);
\node at (18.5,-1.5) {\tiny{$1$}};
\node at (19.5,-2.5) {\tiny$2$};
\node at (20.5,-3.5) {\tiny$3$};
\node at (21.5,-4.5) {\tiny$4$};
\node at (22.5,-5.5) {\tiny$5$};
\node at (23.5,-6.5) {\tiny$6$};
\node at (24.5,-7.5) {\tiny$7$};
\node at (25.5,-8.5) {\tiny$8$};
\draw(18,-5)--(23,-5)--(23,-9);
\draw(18,-3)--(21, -3)--(21, -5);
\draw(23, -7)--(25, -7)--(25, -9);
\draw(21,-3)--(23, -3)--(23, -5);
\draw (19,-3)--(19,-1);
\draw(23, -3) --(23, -1);
\draw(25, -7)--(26, -7);
\filldraw(23, -9) circle [radius=3pt] ;
\filldraw(23, -7) circle [radius=3pt] ;
\filldraw(23, -5) circle [radius=3pt] ;
\filldraw(23, -3) circle [radius=3pt] ;
\filldraw(23, -1) circle [radius=3pt] ;
\node at (23.5, -8.5) {5};
\node at (22.5, -7) {$7$};
\node at (23.5, -5) {$4$};
\node at (23.5, -3) {$2$};
\node at (22.5, -1.5) {$6$};
\filldraw(18, -3) circle [radius=3pt];
\filldraw(19, -3) circle [radius=3pt];
\filldraw(21, -3) circle [radius=3pt];
\node at (17.7, -2.5) {$3$};
\node at (19, -3.5) {$1$};
\node at (21, -2.5) {$4$};

\draw(29,-9) rectangle (37,-1);
\node at (31.5,-1.5) {\tiny$1$};
\node at (32.5,-2.5) {\tiny$2$};
\node at (29.5,-3.5) {\tiny$3$};
\node at (30.5,-4.5) {\tiny$4$};
\node at (33.5,-8.5) {\tiny$5$};
\node at (34.5,-5.5) {\tiny$6$};
\node at (35.5,-6.5) {\tiny$7$};
\node at (36.5,-7.5) {\tiny$8$};
\draw(29,-8)--(34,-8)--(34,-9);
\draw(32, -3)--(32, -1);
\draw(29,-3)--(32, -3);
\draw(34, -6)--(36, -6)--(36, -9);
\draw(32,-3)--(34, -3);
\draw (30,-8)--(30,-3);
\draw(34, -8) --(34, -1);
\draw(36, -6)--(37, -6);
\filldraw(34, -9) circle [radius=3pt] ;
\filldraw(34, -8) circle [radius=3pt] ;
\filldraw(34, -6) circle [radius=3pt] ;
\filldraw(34, -3) circle [radius=3pt] ;
\filldraw(34, -1) circle [radius=3pt] ;
\node at (34.5, -8.6) {$5$};
\node at (34.5, -7.6) {$4$};
\node at (33.5, -6) {$7$};
\node at (34.5, -3) {$2$};
\node at (33.5, -1.5) {$6$};
\filldraw(29, -3) circle [radius=3pt];
\filldraw(30, -3) circle [radius=3pt];
\filldraw(32, -3) circle [radius=3pt];
\node at (28.7, -2.5) {$3$};
\node at (30, -2.5) {$4$};
\node at (32, -3.5) {$1$};
\end{tikzpicture}
\caption{The left diagram shows $\rho(53417286)$ (from Figure \ref{fig:rho}).  The right diagram shows $\gamma(53417286)$.}
\label{fig:gamma}
\end{figure}

To find $\gamma(x)$ from $\rho(x)$ in the example shown in Figure \ref{fig:gamma}, we consider the wall shuffle corresponding to every interior wall of $\rho(x).$
Since a wall slide cannot be performed along any wall with fewer than two rectangles adjacent to each side, we only need to examine the walls with at least two rectangles adjacent to each side.
There are two such walls in~$\rho(x)$.
First consider the vertical wall $W$ between rectangle 5 and rectangle 7.
For this wall, ${\sigma_{W}}=54726$.
We label the vertices along~$W$ as illustrated in the left diagram of Figure~\ref{fig:gamma}.  
In both diagrams of this figure, the rectangle labels have been reduced in size in order to emphasize the vertex labels.
To make the ordering of the labels along $W$ in $\gamma(x)$ agree with $\sigma_W$, the wall slide switching the order of vertices labeled 7 and 4 is performed.
Next consider the horizontal wall $W'$ of~$\rho(x)$ between rectangle 1 and rectangle 3.
For this wall, $\sigma_{W'}=3412$.
We again label the vertices along~$W'$ as illustrated in the left diagram of Figure \ref{fig:gamma}.  
Since the left to right order of these vertices in $\rho(x)$ is $3142$, we perform a wall side switching the order of the vertices labeled 1 and 4 along~$W'$ to obtain~$\gamma(x)$.

Additional examples of the map $\gamma$ are shown in Figures  \ref{fig:cover seq} and \ref{fig:concat}.
The generic rectangulations in both figures are labeled with $\gamma(x)$ where $x$ is the unique \mbox{2-clumped} permutation such that $\gamma(x)$ is the desired rectangulation.
In Figure \ref{fig:cover seq}, the bold entries in each permutation are inverted to find the next permutation in the sequence.  
Examining the permutation~$3142576$ associated with the leftmost generic rectangulation, we see that a generic pivot cannot be performed on the edge separating the shaded rectangles because the $3$ and $4$ are non-adjacent and there exists no permutation $x\in S_7$ such that the~$3$ and $4$ are adjacent in~$x$ and $\gamma(x)=\gamma(3142576)$. 

The theorem below is a rephrasing of a more general result from \cite[Section 2]{Reading}.\begin{theorem}\label{fiber}
Given a generic rectangulation $R$, the fiber $\gamma^{-1}(R)$ forms an interval in the right weak order.  
\end{theorem}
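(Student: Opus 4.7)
The plan is to recognize this theorem as an immediate consequence of two facts already stated in the excerpt. The discussion just after Theorem~\ref{thm: covers} notes that the fibers of $\gamma$ define a lattice congruence on the right weak order on $S_n$, and Section~\ref{sect:clumped} notes that every congruence class of a lattice congruence on the right weak order is an interval. Together these yield that each fiber $\gamma^{-1}(R)$ is an interval in the right weak order.

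To make this concrete, let $x$ be the unique $2$-clumped permutation with $\gamma(x) = R$; existence and uniqueness come from the fact that $\gamma$ restricts to a bijection $Cl_n^2 \to gRec_n$. I would identify $\gamma^{-1}(R)$ with the $\pi_{\downarrow}^2$-fiber $(\pi_{\downarrow}^2)^{-1}(x)$, which is an interval by \cite[Theorem 9.3]{Reading}. The containment $(\pi_{\downarrow}^2)^{-1}(x) \subseteq \gamma^{-1}(R)$ requires checking that $\gamma$ is unchanged by each elementary move used in the definition of $\pi_{\downarrow}^2$, namely the $(V(5)1V^C \to V1(5)V^C)$ and $(V^C(5)1V \to V^C1(5)V)$ moves (with $m+3 = 5$). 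This is a direct check against the two-step construction of $\gamma$: swapping the ``$5$'' and the ``$1$'' in such a pattern does not change which diagonal rectangulation $\rho(\cdot)$ is produced, and it does not change any wall shuffle, so the same wall slides are performed in passing from $\rho$ to $\gamma$. The reverse containment $\gamma^{-1}(R)\subseteq(\pi_{\downarrow}^2)^{-1}(x)$ is automatic because $x$ is the \emph{unique} $2$-clumped element of $\gamma^{-1}(R)$, so every $y \in \gamma^{-1}(R)$ must reduce under the $\pi_{\downarrow}^2$-moves to $x$.

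The main obstacle is the per-pattern invariance check for the four patterns defining $\theta$. These patterns were chosen precisely to characterize when two permutations yield the same generic rectangulation under $\gamma$, so this verification is essentially the content of the more general classification in \cite[Section 2]{Reading}. Once that verification is in hand, the argument is short: apply the two already-cited facts, or equivalently invoke \cite[Theorem 9.3]{Reading} after the identification of fibers, and the theorem follows.
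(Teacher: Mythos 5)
Your argument is correct, but there is nothing in the paper to compare it against directly: the paper offers no proof of Theorem~\ref{fiber} at all, introducing it only as ``a rephrasing of a more general result from \cite[Section 2]{Reading}.'' What you have done is unpack that citation into a self-contained derivation from ingredients the paper does quote elsewhere: the fibers of $\gamma$ form a lattice congruence on the right weak order, and congruence classes of a lattice congruence are intervals. Your first paragraph already suffices on those two facts alone. The second paragraph, identifying $\gamma^{-1}(R)$ with $(\pi_{\downarrow}^2)^{-1}(x)$ for the unique $x\in Cl_n^2$ with $\gamma(x)=R$, is also sound: the forward containment needs exactly the statement that the four elementary moves preserve $\gamma$, which in this paper is Theorem~\ref{thm:grecmove} (quoted from \cite[Proposition 4.3]{grec}) rather than \cite[Section 2]{Reading} as you attribute it --- with $m=2$ one has $V=\{2,4\}$ and $V^C=\{3\}$, so the moves are precisely the $(35124\to 31524)$, $(35142\to 31542)$, $(24513\to 24153)$, and $(42513\to 42153)$ moves of that theorem. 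Your reverse containment is fine once the forward invariance is in hand, since $\gamma(\pi_{\downarrow}^2(y))=\gamma(y)=R$ forces $\pi_{\downarrow}^2(y)=x$ by the bijectivity of $\gamma$ on $Cl_n^2$. In short, your route is a legitimate and slightly more explicit way to obtain the theorem than the paper's bare citation; just be aware that everything you use (the congruence property, the interval property, the move-invariance) is itself imported from \cite{Reading} and \cite{grec}, so the argument is a reassembly of cited results rather than an independent proof.
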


%
%

Using the construction in the proof of \cite[Proposition 4.2]{grec}, we define $\psi$, the inverse of the restriction of $\gamma$ to the set of 2-clumped permutations.
To demonstrate that~$\gamma$ is a surjective map, that proof begins with an arbitrary generic rectangulation~$R$ and the associated diagonal rectangulation $D$. 
A permutation $x$ is constructed, entry by entry, so that $\rho(x)=D$ and each wall shuffle of $R$ is a subsequence of~$x$.
Let $T_{i-1}$ be the partial diagonal rectangulation obtained after completing the first $i-1$ steps in the construction of~$\rho(x)$.
In the proof of \cite[Proposition 4.2]{grec}, the requirement that $\rho(x)=D$ is translated into the requirement that (in $D$) the left side and bottom of rectangle $x_{i}$ are contained in~$T_{i-1}$ for all $i\in[n]$. 
We say that $x_1 \cdots x_i$ \emph{respects the wall shuffles of $R$} if there exists no $x_j\in [n]-\{x_1,...,x_i\}$ such that $x_j$ precedes some element of $\{x_1,...,x_i\}$ in a wall shuffle of $R$.
The requirement that each wall shuffle of $R$ is a subsequence of $x$ is equivalent to the requirement that $x_1\cdots x_i$ respects the wall shuffles of $R$ for all $i \in [n]$. 
Using these equivalences, to show that $\gamma$ is surjective, the proof of \cite[Proposition 4.2]{grec} demonstrates that for all $i\in [n]$ there exists some $x_i \notin \{x_1,...,x_{i-1}\}$ such that the left side and bottom of rectangle $x_i$ are contained in $T_{i-1}$ and $x_1\cdots x_i$ respects the wall shuffles of $R$.
In this construction, each time an entry of $x$ is selected, there may be a choice.
We define $\psi(R)$ be the permutation obtained by choosing the minimum possible entry at each step.
We will prove the following proposition.

\begin{proposition}\label{prop:psi}
The map $\psi: gRec_n \to Cl^2_n$ is the inverse of the restriction of $\gamma$ to 2-clumped permutations.
\end{proposition}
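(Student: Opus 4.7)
The plan is to prove two things: first, that $\gamma(\psi(R)) = R$, which is immediate from the construction in \cite[Proposition 4.2]{grec} (which the definition of $\psi$ just specializes by always choosing the minimum available entry); and second, that $\psi(R) \in Cl^2_n$. Since $\gamma$ restricts to a bijection from $Cl^2_n$ to $gRec_n$, these two facts together force $\psi(R)$ to be the unique 2-clumped permutation in the fiber $\gamma^{-1}(R)$, which gives $\psi = (\gamma|_{Cl^2_n})^{-1}$.

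For the substantive second claim, I would recast being 2-clumped in terms of the right weak order. By \cite[Theorem 9.3]{Reading}, $\pi_{\downarrow}^2(y) = y$ iff $y$ is the minimum of its congruence class in the right weak order, and the congruence classes are exactly the fibers of $\gamma$. Thus it suffices to show that $\psi(R)$ is the minimum of $\gamma^{-1}(R)$, since the class is an interval by Theorem \ref{fiber} and the minimum of this interval is the unique 2-clumped representative.

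To show minimality, I argue by contradiction on adjacent descents. Suppose $\psi(R)_i > \psi(R)_{i+1}$, and let $x'$ be obtained from $\psi(R)$ by swapping these two entries, so $x' \lessdot \psi(R)$ in the right weak order. If $x' \in \gamma^{-1}(R)$, then by the characterization of $\gamma^{-1}(R)$ used in the construction of $\psi$, the length-$i$ prefix $\psi(R)_1 \cdots \psi(R)_{i-1}\,\psi(R)_{i+1}$ of $x'$ satisfies both validity conditions: the left side and bottom of rectangle $\psi(R)_{i+1}$ lie in the partial region $T_{i-1}$, and the prefix respects the wall shuffles of $R$. The key observation here is that $T_{i-1}$ and the wall-shuffle condition depend only on the \emph{set} $\{\psi(R)_1, \ldots, \psi(R)_{i-1}\}$, which is unchanged by the swap. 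Therefore $\psi(R)_{i+1}$ was an available choice for the $i$-th entry when $\psi(R)$ was being built, and since $\psi(R)_{i+1} < \psi(R)_i$, the greedy rule defining $\psi$ would have selected $\psi(R)_{i+1}$ instead of $\psi(R)_i$. This contradiction shows that no descent swap keeps us in the fiber, so $\psi(R)$ is the minimum of $\gamma^{-1}(R)$.

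The step most likely to need care is the ``set-dependence'' observation: one should verify that the partial region $T_{i-1}$ built in the construction of $\rho$ depends only on which rectangles have been placed (not on the order in which they were placed), and similarly for the respect-the-wall-shuffles condition, which merely asks that the set of entries already placed forms a prefix of every wall shuffle. Both of these follow directly from the definitions recalled in Section \ref{sect:gamma}, so the remaining argument is a short combinatorial chase rather than a serious obstacle.
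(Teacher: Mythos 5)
Your proposal is correct and follows essentially the same route as the paper: reduce to showing $\psi(R)$ is the minimal element of the fiber $\gamma^{-1}(R)$ (equivalently, 2-clumped, via Proposition \ref{prop:min2clumped}), then rule out any cover $x \lessdot \psi(R)$ lying in the fiber by noting that the swapped entry $\psi(R)_{i+1} < \psi(R)_i$ would have been a valid choice at step $i$, contradicting the greedy rule; the interval property of fibers (Theorem \ref{fiber}) then gives minimality. Your explicit check that the validity conditions depend only on the set of already-placed entries is a point the paper leaves implicit, but it is the same argument.
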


To prove Proposition \ref{prop:psi}, we will use the following proposition, which appears as part of \cite[Proposition 2.2]{grec}.
\begin{proposition}
\label{prop:min2clumped}
A permutation $y$ is the minimal element of the right weak order such that $\gamma(y)=R$ if and only if $y$ is a 2-clumped permutation.
\end{proposition}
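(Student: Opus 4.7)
The plan is to prove the proposition by identifying the lattice congruence $\Theta_\gamma$ on the right weak order whose classes are the fibers of $\gamma$ with the lattice congruence $\Theta_2$ whose equivalence classes are the fibers of $\pi_{\downarrow}^2$. Once this identification is established, the proposition follows immediately: by the characterization recalled in Section \ref{sect:clumped} (citing \cite[Theorem 9.3]{Reading}), a permutation $y$ satisfies $\pi_{\downarrow}^2(y)=y$ (equivalently, $y$ is a minimum of its $\Theta_2$-class) if and only if $y$ avoids the patterns $V(m{+}3)1V^C$ and $V^C(m{+}3)1V$ with $m=2$, which is exactly the definition of a 2-clumped permutation; coincidence of the congruences then transfers this characterization to the $\gamma$-fibers.

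To establish $\Theta_\gamma=\Theta_2$, I would use the fact that a lattice congruence on the right weak order is determined by its contracted cover relations. By Theorem \ref{fiber}, each fiber of $\gamma$ is already known to be an interval, so $\Theta_\gamma$ is indeed a lattice congruence, and it suffices to prove the following key lemma: a cover relation $y'\lessdot y$ in $S_n$, obtained by swapping adjacent entries $y_i'<y_{i+1}'$ of $y'$, is contracted by $\gamma$ (i.e., $\gamma(y')=\gamma(y)$) if and only if the entries $y_{i+1}=y_i'$ and $y_i=y_{i+1}'$ play the roles of ``$5$'' and ``$1$'' (in the standardization sense) of an occurrence of $V\,51\,V^C$ or $V^C\,51\,V$ in $y$, where $V=\{2,4\}$ and $V^C=\{3\}$.

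The proof of the key lemma proceeds by a direct analysis of the two-step construction $\gamma = (\text{wall slides})\circ \rho$ described in Section \ref{sect:gamma}. For the ``if'' direction, given such a pattern in $y$, I would track the effect of the $51\to 15$ swap on the recursive construction of $\rho$: the swap can change $\rho(y)$ to a different diagonal rectangulation, but the pattern condition forces the three auxiliary values (from $V$ and $V^C$) to surround the ``$1$'' and ``$5$'' rectangles in such a way that they lie on a common wall of the resulting generic rectangulation, and the wall shuffles of $\gamma(y)$ and $\gamma(y')$ coincide; applying the wall slides in the second stage of $\gamma$ therefore produces the same element of $gRec_n$. For the ``only if'' direction, I would argue that for any other kind of adjacent ascent swap, one can exhibit a wall shuffle, or a local wall configuration, which is altered by the swap in a way that cannot be undone by wall slides, giving $\gamma(y)\ne \gamma(y')$.

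The main obstacle is the case analysis inside the key lemma, particularly verifying the ``only if'' direction: there are several qualitatively different local configurations for an adjacent swap (depending on which of the two rectangles lies ``above'' the other in $\rho$ and which walls they share), and each must be checked to confirm either that the swap is contracted and realizes a $V51V^C$ or $V^C51V$ pattern, or that the swap is not contracted. Once the lemma is in hand, the two directions of the proposition are immediate: if $y\in Cl_n^2$, then no cover relation below $y$ is contracted by $\gamma$, so $y$ is minimal in $\gamma^{-1}(\gamma(y))$; conversely, if $y$ is minimal in $\gamma^{-1}(R)$ but $y$ contains a forbidden pattern, the corresponding adjacent swap produces $y'\lessdot y$ with $\gamma(y')=\gamma(y)$, contradicting minimality.
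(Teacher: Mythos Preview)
The paper does not prove this proposition at all: it is quoted verbatim as part of \cite[Proposition~2.2]{grec} and then used as a black box. So there is no proof in the paper to compare your proposal against.

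That said, your strategy is sound, and the paper already contains the ingredient that makes it short. Your ``key lemma'' is exactly Theorem~\ref{thm:grecmove} (cited there as \cite[Proposition~4.3]{grec}): a cover $x\lessdot y$ in the weak order is contracted by $\gamma$ if and only if it is one of the four moves $(35124\!\to\!31524)$, $(35142\!\to\!31542)$, $(24513\!\to\!24153)$, $(42513\!\to\!42153)$, and these four patterns are precisely the 2-clumped avoidance set $\theta$ from Section~\ref{sect:clumped}. So you need not carry out the local case analysis of $\rho$ and wall shuffles yourself; once you invoke Theorem~\ref{thm:grecmove}, your final paragraph finishes the argument directly: if $y\in Cl_n^2$ then no cover below $y$ is a $\theta$-move, hence none is $\gamma$-contracted, so $y$ is minimal in its fiber; conversely, if $y\notin Cl_n^2$ then $y$ contains some pattern in $\theta$, the adjacent ``$51$'' of that pattern gives a cover $y'\lessdot y$ which by Theorem~\ref{thm:grecmove} satisfies $\gamma(y')=\gamma(y)$, so $y$ is not minimal.

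One small correction: the inference ``each fiber is an interval, so $\Theta_\gamma$ is a lattice congruence'' is not valid in general (interval fibers are necessary but not sufficient). This does not damage your argument, since the direct route in your last paragraph bypasses the need to identify $\Theta_\gamma$ with $\Theta_2$ as congruences; you only need Theorem~\ref{thm:grecmove} and Theorem~\ref{fiber}.
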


\begin{proof}[Proof of Proposition \ref{prop:psi}]
Let $R\in gRec_n$.
To prove the proposition, it suffices to demonstrate that $\psi(R)\in Cl^2_n$, or equivalently, by Proposition \ref{prop:min2clumped}, that $\psi(R)$ is the minimal element of the right weak order mapping to $R$ under $\gamma$.
Let $\psi(R)=p=p_1\cdots p_n$ and $x\in S_n$ such that $x \lessdot p$ in the right weak order.
Then there exists some $i\in [n-1]$ such that $x=p_1\cdots p_{i-1}p_{i+1} p_i p_{i+2}\cdots p_n$ and $p_{i+1}<p_i$.
Since $x_j=p_j$ for all $j\in [i-1]$, and $p_i$ is the smallest entry of any permutation starting with $p_1\cdots p_{i-1}$ and mapping to $R$ under $\gamma$, we have that $\gamma(x)\neq R$.
By Theorem~\ref{fiber}, the permutation $p$ is the minimal element of the right weak order such that~$\gamma(p)=R$. 
\end{proof}

%
%

\section{The lattice of generic rectangulations}
\label{weak order}

In this section, we prove Theorem \ref{thm: covers}.  
To do so, we rely on results about diagonal rectangulations from \cite{drec} and results about generic rectangulations from~\cite{grec}.
Recall that we call each element of $Cl_n^1$ a twisted Baxter permutation.
The map $\rho: S_n \to dRec_n$ restricts to a bijection between $Cl^1_n$  and $dRec_n$~\cite[Theorem 6.1]{drec}.
The right weak order on $S_n$ modulo the fibers of $\rho$ is a lattice on the set of twisted Baxter permutations.
Applying $\rho$ to the elements of this lattice results in a lattice of diagonal rectangulations of size $n$ which, reusing notation, we call $dRec_n$.

To describe the cover relations of $dRec_n$, we define diagonal pivots.  
Let $D$ and~$D'$ be diagonal rectangulations.
Diagonal pivots and generic pivots are closely related.
Specifically,~$D$ and~$D'$ are related by a \emph{diagonal pivot} if and only if they are related by a local change shown in one of the left three diagrams of  Figure \ref{fig:moves}, where the dotted segment of each diagram is ignored.
In this paper, we call each of these local moves diagonal pivots to emphasize that they are performed on diagonal rectangulations rather than generic rectangulations.
The reader should note that this differs from the definition of a diagonal pivot given in \cite{drec}, where the move illustrated in the leftmost diagram of Figure \ref{fig:moves} is called a diagonal pivot and the other two local moves which we also call diagonal pivots are instead called vertex pivots.
The cover relations in $dRec_n$ are described in \cite[Theorem 7.1]{drec}:

\begin{theorem}
\label{thm:coverindrec}
Two diagonal rectangulations $D$ and $D'$ of size $n$ have $D \lessdot D'$ in~$dRec_n$ if and only if they are related by a diagonal pivot such that the pivoted edge is vertical in $D$.
\end{theorem}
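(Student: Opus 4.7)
The plan is to derive Theorem \ref{thm:coverindrec} from the fact that $dRec_n$ is, by construction, the right weak order on $S_n$ quotiented by the fibers of $\rho$, with the twisted Baxter permutations $Cl^1_n$ serving as the minimal elements of these fibers (the analogue for $\rho$ of Proposition \ref{prop:min2clumped}). Under this identification, a cover $D \lessdot D'$ in $dRec_n$ arises from a cover $x \lessdot y$ in the right weak order on $S_n$ where $x \in Cl^1_n$ with $\rho(x) = D$, and where $\rho(y)$ lies in a fiber distinct from $D$; in that case $D' = \rho(y)$. Concretely, $y$ differs from $x$ by swapping an ascent $x_i < x_{i+1}$, and the task reduces to classifying when such a swap changes $\rho$ and describing the resulting local change in the diagonal rectangulation.

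First I would fix such a swap and analyze it through the recursive construction of $\rho$. After the first $i-1$ steps the partial diagonal rectangulation $T_{i-1}$ is identical for $x$ and $y$. At steps $i$ and $i+1$ the two permutations place rectangles $x_i$ and $x_{i+1}$ in opposite orders. Because $x_i < x_{i+1}$, the label $x_i$ sits at a dot strictly above the dot for $x_{i+1}$ on the diagonal. Examining the recursive rule that determines the upper-left and lower-right vertices of a newly placed rectangle in terms of $T_{i-1}$, one checks that the two orderings produce different rectangulations exactly when rectangles $x_i$ and $x_{i+1}$ end up sharing an edge; otherwise the two placements are independent. A case split on whether the dots adjacent to each label meet the boundary of $T_{i-1}$ shows that precisely one of three local configurations can arise around the shared edge, and that interchanging the placement order transforms the configuration by exactly one of the three local moves in the left three diagrams of Figure \ref{fig:moves}, with the dashed segments ignored. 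Furthermore, since $x_i$ is placed first and sits upper-left of $x_{i+1}$ on the diagonal, the shared edge is necessarily vertical in $D = \rho(x)$ and horizontal in $D'$.

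Next I would confirm the two directions of the equivalence in the theorem. For the forward direction, the general theory of lattice congruences (used in \cite{drec}) guarantees that, because the fibers of $\rho$ form the congruence classes of a lattice congruence on the right weak order, $\rho$ sends each cover $x \lessdot y$ with $x$ the minimum of its class and $\rho(y) \neq \rho(x)$ to a cover in $dRec_n$, and every cover of $dRec_n$ is obtained in this way. Combined with the previous step, this shows every cover in $dRec_n$ is a diagonal pivot with pivoted edge vertical in $D$. For the reverse direction, given a diagonal pivot from $D$ to $D'$ whose pivoted edge is vertical in $D$, I would use the procedure analogous to $\psi$ (built from the proof of \cite[Proposition 4.2]{grec} but specialized to diagonal rectangulations, namely the restriction of $\rho^{-1}$ to $Cl^1_n$) to produce a twisted Baxter $x$ with $\rho(x) = D$ in which the two rectangles incident to the pivoted edge occur as adjacent ascending entries $x_i < x_{i+1}$; swapping them gives $y$ with $\rho(y) = D'$, witnessing the cover $D \lessdot D'$.

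The main obstacle is the case analysis in the first step: identifying exactly which local configurations around a shared edge can be produced by an ascending adjacency in a twisted Baxter permutation, and verifying that the pattern-avoidance conditions defining $Cl^1_n$ (avoiding $2$-$41$-$3$ and $3$-$41$-$2$) are precisely what rule out the wall-slide-type configurations that appear in Theorem \ref{thm: covers} for generic rectangulations but must be absent here. Once this bookkeeping is aligned with the recursive placement rule of $\rho$, the three diagonal-pivot diagrams emerge as the complete and exclusive list of local transformations realizing cover relations in $dRec_n$.
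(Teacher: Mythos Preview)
This theorem is not proved in the present paper at all: it is quoted verbatim as \cite[Theorem 7.1]{drec} and used as a black box. So there is no ``paper's own proof'' to compare against. Your outline is, in fact, a reasonable sketch of how the result is established in \cite{drec}, but since the paper under review simply cites it, any proof you supply is extra.

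That said, there is one concrete slip in your setup. You take the \emph{lower} element $x$ of the weak-order cover to be the twisted Baxter permutation. The version of the quotient lemma used in this paper (Proposition~\ref{covers}, and its $dRec$ analogue) puts the pattern-avoiding permutation at the \emph{top}: one takes $y\in Cl^1_n$ with $\rho(y)=D'$ and finds $x\lessdot y$ with $\rho(x)=D$. In general it is not true that the bottom of a congruence class has an upper cover landing in every class that covers it, so your formulation as stated does not obviously capture all covers of $dRec_n$. The fix is immediate---swap the roles of $x$ and $y$---but it changes the bookkeeping in your case analysis (you now start from a \emph{descent} $y_i>y_{i+1}$ in a twisted Baxter $y$, exactly as in the proof of Proposition~\ref{prop:part1}).

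A smaller point: in your final paragraph you suggest that the $Cl^1_n$ pattern-avoidance is what excludes wall-slide moves. That is not quite the mechanism. Wall slides are absent from $dRec_n$ because they do not change $\rho$ at all (Theorem~\ref{thm:drecmove}); they live entirely inside $\rho$-fibers. The role of the twisted Baxter condition on $y$ is rather to guarantee that $\rho(x)\neq\rho(y)$, so that the swap really produces a cover in the quotient.
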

 
The following is a restatement of \cite[Theorem 4.5, part (3)]{grec}:

\begin{theorem}
\label{thm:drecmove}
Assume that $x\lessdot y$ in the right weak order.  Then $\rho(x)=\rho(y)$ if and only if~$x$ is obtained from $y$ by either a $(3412 \to 3142)$ move or a $(2413 \to 2143)$ move.
\end{theorem}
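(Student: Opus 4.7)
The plan is to derive this statement by matching the fibers of $\rho$ with the congruence classes of $\pi_\downarrow^1$ and then reading off the conclusion from the general description of $\pi_\downarrow^m$ in Section~\ref{sect:clumped}.

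First I would observe that the fibers of $\rho$ on $S_n$ coincide with the congruence classes of $\pi_\downarrow^1$. Two ingredients recalled in the paper make this automatic: the restriction of $\rho$ to $Cl^1_n$ is a bijection onto $dRec_n$, and the analogue of Proposition~\ref{prop:min2clumped} for diagonal rectangulations (namely \cite[Proposition 2.2]{grec} applied with $m=1$) says that the minimum in the right weak order of each $\rho$-fiber is a twisted Baxter permutation. Together these identify the $\pi_\downarrow^1$-class of $x$ with $\rho^{-1}(\rho(x))$.

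Next, assume $x \lessdot y$ with $\rho(x) = \rho(y)$, so that $y = x_1 \cdots x_{i-1} \, a \, b \, x_{i+2} \cdots x_n$ with $a > b$ and $x$ is obtained by swapping $a$ and $b$. Since $x$ and $y$ lie in the same $\pi_\downarrow^1$-class but are joined by a single cover in the right weak order, the description of $\pi_\downarrow^m$ with $m=1$ (so $V=\{2\}$, $V^C=\{3\}$) forces this cover to realize one of the canonical moves $(V\,4\,1\,V^C \to V\,1\,4\,V^C)$ or $(V^C\,4\,1\,V \to V^C\,1\,4\,V)$, i.e.\ $(2413 \to 2143)$ or $(3412 \to 3142)$. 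Concretely this produces indices $j < i$ and $k > i+1$ such that $y_j, a, b, y_k$ form an occurrence of $2413$ or $3412$ in $y$. The converse is immediate: each such move is an adjacent transposition that steps down in the right weak order and is identified by $\pi_\downarrow^1$, hence identified by $\rho$.

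The hard part will be justifying the inference that a single cover inside one $\pi_\downarrow^1$-class must be realized by a single canonical move, rather than, say, being forced only by a longer chain of such moves combined with other covers. This is a general feature of lattice congruences generated by pattern-based local moves: the identified covers generate the congruence, so equivalent elements adjacent in the weak order must be identified by a single generator. I would cite this directly from the results of \cite{Reading} invoked in Section~\ref{sect:clumped}, rather than re-derive it.
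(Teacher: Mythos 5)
First, a point of comparison: the paper does not prove this statement at all.  It is introduced with ``The following is a restatement of \cite[Theorem 4.5, part (3)]{grec}'' and used as a black box, so there is no internal proof to match your derivation against.  Your instinct to derive it from the $m=1$ case of the clumped-permutation machinery is reasonable, and your bookkeeping is right: for $m=1$ one gets $V=\{2\}$, $V^C=\{3\}$, the patterns $2$-$41$-$3$ and $3$-$41$-$2$, and the moves $(2413\to 2143)$ and $(3412\to 3142)$, matching the statement.

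The genuine gap is in your final paragraph, and it is not merely a citation you deferred --- the justification you offer for the key step is false as stated.  You write that ``the identified covers generate the congruence, so equivalent elements adjacent in the weak order must be identified by a single generator.''  For a lattice congruence \emph{generated} by a set of edges, the set of contracted edges is in general strictly larger than the generating set; that is exactly what the forcing relation among edges measures.  So ``generated by the move-edges'' does not imply ``every contracted cover is a move-edge.''  The content of the theorem is precisely the nontrivial assertion that for these pattern congruences the set of contracted covers coincides exactly with the set of covers realized by a single $(2413\to2143)$ or $(3412\to3142)$ move --- i.e., that the generating set is already closed under forcing.  That is a theorem of \cite{grec} (and of \cite{Reading}, \cite{drec}), not a formal consequence of generation, so your argument is circular unless you cite that exact statement, at which point you have simply re-cited the theorem.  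A secondary, smaller gap: identifying the fibers of $\rho$ with the classes of $\pi_{\downarrow}^1$ does not follow just from both partitions having twisted Baxter permutations as class minima (two distinct congruences could share a set of bottom elements a priori); this identification also needs the results of \cite{drec} relating $\rho$ to the twisted Baxter congruence.  In short, your outline organizes the right ingredients, but the ``hard part'' you flag is the entire theorem, and the general principle you invoke to dispatch it does not hold.
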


The analogous result also holds for generic rectangulations \cite[Proposition 4.3]{grec}.

\begin{theorem}\label{thm:grecmove}
Assume that $x\lessdot y$ in the right weak order.  Then $\gamma(x)=\gamma(y)$ if and only if~$x$ is obtained from $y$ by a $(35124 \to 31524)$ move, a $(35142 \to 31542)$ move, a $(24513\to 24153)$ move, or a $(42513 \to 42153)$ move.
\end{theorem}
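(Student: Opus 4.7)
My plan is to prove both directions of the equivalence using Theorem~\ref{thm:drecmove} as a scaffold and analyzing the wall-slide step of the construction of $\gamma$ from $\rho$.

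For the $(\Leftarrow)$ direction, I would verify that each of the four listed moves preserves $\gamma$. Observe first that each 5-character pattern $V$-$51$-$V^C$ or $V^C$-$51$-$V$ contains a 4-character subpattern $V$-$41$-$V^C$ or $V^C$-$41$-$V$ obtained by dropping the extra entry; Theorem~\ref{thm:drecmove} then gives $\rho(x)=\rho(y)$ immediately. It remains to track the wall slides: on the common diagonal rectangulation $\rho(y)$, the wall shuffles computed from $y$ versus from $x$ must yield the same generic rectangulation. I would inspect the wall shuffle of each wall incident to both rectangles $y_i$ and $y_{i+1}$ (the ``$5$'' and ``$1$'' of the pattern), arguing that the extra ``$V^C$'' or ``$V$'' witness provided by the 5-character pattern interleaves these two entries in the relevant wall shuffle so that the final vertex order along the wall, after all slides, is insensitive to the swap. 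By the $V\leftrightarrow V^C$ symmetry, only two of the four cases would need a full verification.

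For the $(\Rightarrow)$ direction, assume $x\lessdot y$ and $\gamma(x)=\gamma(y)$. By Proposition~\ref{prop:min2clumped}, $y$ is not the minimum of its fiber, so $y$ is not $2$-clumped and hence contains at least one pattern of $\theta$. The crux is to align such a pattern occurrence so that its ``$51$'' sits exactly at the swap positions $i,i+1$. I would prove this alignment by showing that $\gamma(x)=\gamma(y)$ forces the wall shuffle of every wall incident to both $y_i$ and $y_{i+1}$ to produce the same post-slide vertex ordering, and then combine this wall-shuffle constraint with the 4-character pattern supplied by Theorem~\ref{thm:drecmove} in the subcase $\rho(x)=\rho(y)$ to pinpoint the extra ``$V^C$'' or ``$V$'' witness at the right position.

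The main obstacle will be the geometric bookkeeping: tracing how the swap propagates through the wall shuffles on the various walls it touches, and showing that insensitivity of the wall-slide output forces a fifth witness entry to appear at the correct location. A possible shortcut is to observe that both the $\gamma$-congruence and the $\pi_\downarrow^2$-congruence have $Cl_n^2$ as their set of minimum elements (by Proposition~\ref{prop:min2clumped} and the characterization of $\pi_\downarrow^2$-fixed points from~\cite[Theorem 9.3]{Reading}); invoking general principles of lattice congruences on the right weak order to identify the two congruences would reduce Theorem~\ref{thm:grecmove} to the definition of $\pi_\downarrow^2$ via its four generating moves.
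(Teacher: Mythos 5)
First, a point of reference: the paper does not prove Theorem \ref{thm:grecmove} at all --- it is imported verbatim from \cite[Proposition 4.3]{grec} --- so there is no in-paper argument to compare yours against. Judged on its own terms, your outline has a reasonable skeleton (reduce to $\rho$ via Theorem \ref{thm:drecmove}, then control the wall shuffles), but two steps as written would not go through. In the $(\Leftarrow)$ direction, your mechanism --- that the fifth witness makes the post-slide vertex order along a shared wall ``insensitive to the swap'' --- is not how wall shuffles work: every rectangle adjacent to a wall $W$ contributes its own distinct vertex on $W$, and the bottom-to-top (or left-to-right) order of those vertices \emph{is} $\sigma_W$. Hence if $y_i$ and $y_{i+1}$ both appear in some $\sigma_W$, swapping them always changes the resulting generic rectangulation; this is exactly the dichotomy exploited in the first paragraph of the proof of Proposition \ref{prop:part1}. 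What you must actually show is that the fifth pattern witness forces rectangles $y_i$ and $y_{i+1}$ to be adjacent to \emph{no} common wall, so that no wall shuffle changes at all; that geometric argument is the real content of this direction and is missing. In the $(\Rightarrow)$ direction, Proposition \ref{prop:min2clumped} only tells you that $y$ contains \emph{some} occurrence of a pattern in $\theta$, with no control over its position; the ``alignment'' step you defer is precisely the hard part, and your plan offers no mechanism for producing the fifth witness at the swap positions from the hypothesis $\gamma(x)=\gamma(y)$ beyond restating the goal.

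Your proposed shortcut is the most promising route and should be the main argument rather than an afterthought. A lattice congruence on a finite lattice is determined by the set of minima of its classes (for any class minimum $m\le x$ one has $m=\pi_\downarrow(m)\le\pi_\downarrow(x)$, so $\pi_\downarrow(x)$ is the unique maximal class minimum below $x$); since the $\gamma$-fibers and the $\pi_\downarrow^2$-fibers are both lattice congruences on the weak order whose class minima are exactly $Cl_n^2$ (Proposition \ref{prop:min2clumped} and \cite[Theorem 9.3]{Reading}), the two congruences coincide. This yields $(\Leftarrow)$ cleanly: if $x$ is obtained from $y$ by one of the four moves, then $\pi_\downarrow^2(y)\le\pi_\downarrow^2(x)\le x$, so $x$ lies in the interval forming the congruence class of $y$ and hence $\gamma(x)=\gamma(y)$. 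But for $(\Rightarrow)$ you still need to know that the $\pi_\downarrow^2$-congruence contracts a cover \emph{only} when the transposed pair is the ``$51$'' of a $\theta$-pattern occurrence; that statement is not contained in what this paper quotes from \cite{Reading} (which characterizes only the class minima and defines $\pi_\downarrow^2$ by move sequences), so you would have to either cite it explicitly in that stronger form or supply the alignment argument yourself.
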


The following is a specialization of a more general result \cite[Proposition 2.2]{Reading} to the case of 2-clumped permutations and generic rectangulations.

\begin{proposition}\label{covers}
Let $y\in Cl_n^2$.  
Then $R\in gRec_n$ is covered by $\gamma(y)$ in the lattice of generic rectangulations of size $n$ if and only if there exists some permutation $x\in S_n$ with $\gamma(x)=R$ such that $x\lessdot y$ in the right weak order on $S_n$.
\end{proposition}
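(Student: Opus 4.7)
The plan is to derive Proposition \ref{covers} as a direct specialization of the general lattice-congruence result \cite[Proposition 2.2]{Reading}. The two ingredients I need are already established: the fibres of $\gamma$ form a lattice congruence on the right weak order on $S_n$ (recalled in the introduction), and Proposition \ref{prop:min2clumped} identifies each 2-clumped permutation as the minimum element of its $\gamma$-fibre. So for $y\in Cl_n^2$, the task is to translate the abstract description of covers of $\gamma(y)$ in the quotient $gRec_n$ into the concrete statement of the proposition.

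For the ``if'' direction, I would take $x \lessdot y$ in the right weak order with $\gamma(x) = R$. Since $y$ is the minimum of its fibre, the strict inequality $x < y$ places $x$ in a different fibre from $y$, so $R = \gamma(x) \neq \gamma(y)$. Because $\gamma$ is order-preserving, $R \leq \gamma(y)$, and the standard fact that cover relations in a finite lattice congruence either collapse or project to cover relations in the quotient upgrades this to $R \lessdot \gamma(y)$ in $gRec_n$.

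For the ``only if'' direction, assume $R \lessdot \gamma(y)$ in $gRec_n$. I first produce some $z \leq y$ in $S_n$ with $\gamma(z) = R$: letting $R_{\max}$ denote the maximum of the interval $\gamma^{-1}(R)$ and setting $z = R_{\max} \wedge y$, the fact that $\gamma$ is a lattice homomorphism gives $\gamma(z) = R \wedge \gamma(y) = R$. Now I would choose any maximal chain from $z$ to $y$ in the right weak order and let $x$ be its penultimate element, so that $x \lessdot y$. Then $R = \gamma(z) \leq \gamma(x) \leq \gamma(y)$, and minimality of $y$ in its fibre forces $\gamma(x) \neq \gamma(y)$, so the cover relation $R \lessdot \gamma(y)$ in the quotient forces $\gamma(x) = R$.

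The main obstacle is lifting the cover relation from $gRec_n$ back to the right weak order while landing in the correct fibre; the meet construction $R_{\max} \wedge y$ handles this cleanly using that $\gamma$ is a lattice homomorphism. Beyond that, the only properties invoked are that covers in a finite lattice congruence either collapse or descend to covers in the quotient, which is precisely the content of the general result from \cite{Reading} being specialized here.
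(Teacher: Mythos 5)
Your argument is correct, but it is worth noting that the paper does not actually prove Proposition \ref{covers} at all: it is stated as a direct specialization of \cite[Proposition 2.2]{Reading}, with no argument given. What you have written is essentially a reconstruction of the proof of that general lattice-congruence fact in this particular instance. Your ``if'' direction is exactly the content of Proposition \ref{prop:quotient} (itself cited from \cite[Prop 9-5.4]{new}) combined with Proposition \ref{prop:min2clumped}, which guarantees $\gamma(x)\neq\gamma(y)$ because $y$ is the bottom of its fibre. Your ``only if'' direction is the substantive part, and the meet construction $z=R_{\max}\wedge y$ is the right idea: it uses that fibres are intervals (Theorem \ref{fiber}) to produce $R_{\max}$, and that the quotient map of a lattice congruence is a lattice homomorphism to conclude $\gamma(z)=R\wedge\gamma(y)=R$; walking up a maximal chain from $z$ to $y$ and using minimality of $y$ in its fibre together with the cover relation $R\lessdot\gamma(y)$ then pins down $\gamma(x)=R$ for the penultimate element $x$. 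All the background facts you invoke (classes are intervals, the quotient map is a lattice homomorphism, uncollapsed covers descend to covers) are standard for lattice congruences on finite lattices and are the same facts the paper imports from \cite{Reading} and \cite{new}, so your derivation is sound and self-contained relative to what the paper already assumes; it simply makes explicit what the paper delegates to the citation.
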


The following proposition is a specialization of \cite[Prop 9-5.4]{new}:

\begin{proposition} \label{prop:quotient}

Given distinct $R_1, R_2 \in gRec_n$, we have that $R_1 \lessdot R_2$ in $gRec_n$ if and only if there exist $x_1, x_2 \in S_n$ such that $\gamma(x_1)=R_1$ and $\gamma(x_2)=R_2$ with $x_1 \lessdot x_2$ in the right weak order on $S_n$. 
\end{proposition}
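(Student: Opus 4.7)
My plan is to derive both implications from the fact, already recorded in the introduction, that the fibres of $\gamma$ form a lattice congruence on the right weak order, so that $\gamma$ descends to a lattice isomorphism onto $gRec_n$. Both directions are then instances of the general principle that cover relations in a lattice quotient lift to cover relations in the original lattice, specialised here using Proposition \ref{covers} and Theorem \ref{fiber}.

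For the $(\Rightarrow)$ direction I would reduce cleanly to Proposition \ref{covers}. If $R_1 \lessdot R_2$ in $gRec_n$, set $y_2 = \psi(R_2)$; by Propositions \ref{prop:psi} and \ref{prop:min2clumped}, $y_2$ is the 2-clumped permutation realising the minimum of the fibre $\gamma^{-1}(R_2)$. Proposition \ref{covers} applied to $y_2 \in Cl_n^2$ then produces some $x_1 \in S_n$ with $\gamma(x_1) = R_1$ and $x_1 \lessdot y_2$ in the right weak order, so the pair $(x_1, y_2)$ supplies the required witnesses.

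For the $(\Leftarrow)$ direction, suppose $x_1 \lessdot x_2$ with $\gamma(x_1) = R_1 \neq R_2 = \gamma(x_2)$. Order-preservation gives $R_1 \leq R_2$ in $gRec_n$, so I must exclude any strict intermediate. Write $\gamma^{-1}(R_2) = [y_2, y_2^{\mathrm{top}}]$ via Theorem \ref{fiber}, where $y_2 = \psi(R_2)$. If $y_2 \leq x_1$ held, then $x_1 \in [y_2, x_2] \subseteq \gamma^{-1}(R_2)$ would contradict $R_1 \neq R_2$. Hence $y_2 \not\leq x_1$, which together with $y_2 \leq x_2$ and the cover $x_1 \lessdot x_2$ forces $x_1 \vee y_2 = x_2$.

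Now let $R'$ be any element with $R_1 \leq R' \leq R_2$ and let $\bar{c}$ be the minimum of $\gamma^{-1}(R')$; since the quotient order is controlled by minima of fibres, $\pi_{\downarrow}^2(x_1) \leq \bar{c} \leq y_2$. Then $x_1 \vee \bar{c} \leq x_1 \vee y_2 = x_2$, and $x_1 \lessdot x_2$ forces $x_1 \vee \bar{c} \in \{x_1, x_2\}$. If $x_1 \vee \bar{c} = x_1$ then $\bar{c} \leq x_1$, and sandwiching $\pi_{\downarrow}^2(x_1) \leq \bar{c} \leq x_1$ inside the interval fibre $\gamma^{-1}(R_1)$ yields $R' = R_1$; if instead $x_1 \vee \bar{c} = x_2$, applying the lattice homomorphism $\gamma$ and using $R_1 \leq R'$ gives $R_2 = R_1 \vee R' = R'$. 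The main obstacle I anticipate is the lattice-theoretic bookkeeping of this final paragraph — that $\gamma$ genuinely preserves joins and that the quotient order is governed by fibre-minima — but these are precisely the pieces packaged into the assertion that the fibres of $\gamma$ form a lattice congruence, which is the content of the cited general result \cite[Prop 9-5.4]{new}.
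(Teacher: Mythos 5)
Your argument is correct, but it is worth noting that the paper does not actually prove this proposition: it is stated as a direct specialization of a general fact about lattice quotients, \cite[Prop 9-5.4]{new}, with no argument given. What you have written is essentially a self-contained derivation of that special case from the ingredients the paper does make available — that the fibres of $\gamma$ form a lattice congruence whose classes are intervals (Theorem \ref{fiber}), that the quotient order is governed by the class minima $\pi_{\downarrow}^2$, and Proposition \ref{covers} for the forward direction. Your forward direction is an immediate reading-off of Proposition \ref{covers} with $x_2=\psi(R_2)$, and your backward direction is sound: the reduction to $x_1\vee y_2=x_2$ via $y_2\not\leq x_1$, the dichotomy $x_1\vee\bar c\in\{x_1,x_2\}$ forced by the cover $x_1\lessdot x_2$, and the two cases (interval fibres in the first, join-preservation of the quotient map in the second) together show the interval $[R_1,R_2]$ has no strict intermediate element. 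The facts you lean on — order- and join-preservation of the quotient map and the characterization of the quotient order by fibre minima — are exactly the content of the congruence hypothesis, so there is no gap; your version buys a reader a transparent proof at the cost of a few lines, whereas the paper's citation buys brevity at the cost of sending the reader to \cite{new}. One could streamline your backward direction slightly by quoting the standard fact that in a finite lattice quotient every cover of $L/\Theta$ is realized by a cover of $L$ between adjacent classes, but your explicit case analysis is a perfectly good substitute.
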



%

In light of Proposition \ref{covers}, the next proposition is one direction of Theorem \ref{thm: covers}.


\begin{proposition}\label{prop:part1}
Let $x\in S_n$ and $y\in Cl_n^2$ such that $x\lessdot y$ in the right weak order.  Then $\gamma(x)=R_1$ and $\gamma(y)=R_2$ are related by a generic pivot or wall slide shown in Figure \ref{fig:moves} with the bottom diagram corresponding to $R_1$ and the top diagram corresponding to $R_2$.
\end{proposition}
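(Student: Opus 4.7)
The plan is to perform a case analysis based on whether $\rho(x) = \rho(y)$, reducing each case to an existing classification of cover relations. A preliminary observation: since $y \in Cl_n^2$, Proposition~\ref{prop:min2clumped} makes $y$ the minimum element of $\gamma^{-1}(R_2)$, so the hypothesis $x \lessdot y$ forces $\gamma(x) \neq \gamma(y)$ and hence $R_1 \neq R_2$. By Proposition~\ref{prop:quotient}, this already confirms $R_1 \lessdot R_2$ in $gRec_n$, so the remaining task is only to identify the shape of the local move.

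Suppose first that $\rho(x) = \rho(y)$. By Theorem~\ref{thm:drecmove}, $x$ is obtained from $y$ by either a $(3412 \to 3142)$ move or a $(2413 \to 2143)$ move. Writing $D := \rho(x) = \rho(y)$, both $R_1$ and $R_2$ are produced from $D$ by wall slides dictated by the respective wall shuffles, which (being subsequences of $x$ and $y$) agree except at walls whose shuffle contains both of the swapped entries $y_i$ and $y_{i+1}$. The plan is to verify, by direct inspection of how the four rectangles $y_j, y_i, y_{i+1}, y_k$ of the pattern ``$3412$'' (respectively ``$2413$'') sit inside $D$, that exactly one wall $W$ of $D$ has both swapped entries in its shuffle, and that reversing them along $W$ produces precisely the local change depicted in one of the two rightmost diagrams of Figure~\ref{fig:moves}. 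The roles ``3'' and ``2'' in the pattern pin down the sides from which the two sliding walls extend from $W$, thereby selecting between the horizontal and vertical slide.

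Next, suppose $\rho(x) \neq \rho(y)$. Since the fibres of $\rho$ form a lattice congruence on the right weak order, the cover $x \lessdot y$ together with $\rho(x) \neq \rho(y)$ forces $\rho(x) \lessdot \rho(y)$ in $dRec_n$, and Theorem~\ref{thm:coverindrec} then supplies a diagonal pivot on a vertical edge relating the two. The plan is to lift this to a generic pivot between $R_1$ and $R_2$: away from the neighbourhood of the pivoted edge, the wall shuffles of $x$ and $y$ agree, so the wall slides producing $R_1$ from $\rho(x)$ and $R_2$ from $\rho(y)$ have identical effect there; inside the neighbourhood, the change is precisely one of the three diagonal pivots shown in the first three diagrams of Figure~\ref{fig:moves}. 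The remaining point is to check that no edge of $R_1$ or $R_2$ has an endpoint in the interior of a dashed segment. The plan for this step is a contradiction argument: an offending edge would witness a subsequence of $y$ matching $V$-$51$-$V^C$ or $V^C$-$51$-$V$ with $V = \{2,4\}$ and $V^C = \{3\}$, contradicting $y \in Cl_n^2$.

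The main obstacle will be the dashed-segment check in Case B: one must enumerate, for each of the three pivot shapes and for each dashed segment in it, the possible edges that could land there after the wall slides of $\gamma$, and show that each such configuration forces a forbidden 2-clumped pattern in $y$. The bookkeeping in Case A is comparatively mild but still requires correctly identifying which wall receives the slide and in which orientation, by unpacking how the pattern ``$3412$'' or ``$2413$'' is realized in the diagonal rectangulation $D$.
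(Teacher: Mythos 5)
Your overall architecture matches the paper's: split on whether $\rho(x)=\rho(y)$, use Theorem~\ref{thm:drecmove} and genericity to isolate a single wall slide in the first case, and use Theorem~\ref{thm:coverindrec} to get a diagonal pivot in the second case, then upgrade it to a generic pivot by comparing wall shuffles. The side remark that Proposition~\ref{prop:quotient} gives $R_1\lessdot R_2$ is correct but not what the proposition asks for.

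The genuine gap is in the step you yourself flag as the main obstacle: the dashed-segment check in the case $\rho(x)\neq\rho(y)$. You propose to derive a contradiction from an occurrence of $V$-$51$-$V^C$ or $V^C$-$51$-$V$ (with $V=\{2,4\}$, $V^C=\{3\}$) in $y$, i.e.\ from $y$ failing to be 2-clumped. That implication is not established and is not the right mechanism. Every dashed segment in the three pivot diagrams lies on a wall $W$ adjacent to \emph{both} rectangles $y_i$ and $y_{i+1}$, so an edge with an endpoint in the interior of a dashed segment produces a third label $z$ strictly between $y_i$ and $y_{i+1}$ in $\sigma_W$. Since $\sigma_W$ is by construction a subsequence of the permutation ($y$ for $R_2$, $x$ for $R_1$) and the entries $y_i,y_{i+1}$ occupy adjacent positions in both, there is no room for $z$: the contradiction is immediate and uses only adjacency, not pattern avoidance. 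Indeed, 2-clumpedness of $y$ enters the paper's proof only through Proposition~\ref{prop:min2clumped} to guarantee $\gamma(x)\neq\gamma(y)$; the dashed-segment condition holds for any cover $x\lessdot y$ once the pivot case is reached. Conversely, an offending edge does not manufacture the extra non-adjacent entries that the forbidden patterns require, so your proposed contradiction would stall. A secondary under-justified point: in the case $\rho(x)=\rho(y)$, the claim that exactly one wall shuffle changes is not read off the pattern occurrence; it needs the argument that the two rectangles cannot lie on the same side of a common wall (else $\rho$ would change) and cannot lie on opposite sides of both a vertical and a horizontal wall (else four rectangles would share a vertex, violating genericity). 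You would also still need to track, wall by wall, where the new vertices created by the pivot land relative to the existing ones, which is the remaining content of the paper's case analysis.
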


\begin{proof}
Let $y=y_1\cdots y_n$.
Since $x\lessdot y$ in the right weak order on $S_n$, we have that $x=y_1\cdots y_{i-1}y_{i+1}y_{i}y_{i+2} \cdots y_n$ with~$y_{i+1}<y_{i}$.  
By Proposition \ref{prop:min2clumped}, $\gamma(x)\neq \gamma(y)$.
We consider two cases: $\rho(x)=\rho(y)$ and $\rho(x)\neq \rho(y)$.  

First assume that $\rho(x)=\rho(y)$.  
Since $\gamma(x)\neq \gamma(y)$, rectangulations $R_1$ and $R_2$ differ by wall slides.
Every wall shuffle of $R_1$ is a subsequence of $x$, so interchanging two elements of $x$ to obtain~$y$ changes the order of at most two elements of any wall shuffle.  
Suppose first that more than one wall shuffle of $R_1$ differs from 
the corresponding wall shuffle of $R_2$.
Specifically, assume that the adjacent pair $y_{i+1}y_i$ appears in two or more wall shuffles of $R_1$, so rectangles~$y_{i+1}$ and $y_i$ are adjacent to at least two shared walls.
Since $\rho(x)=\rho(y)$, the corresponding wall shuffles of $R_2$ contain adjacent pair $y_iy_{i+1}$, implying that rectangles $y_i$ and $y_{i+1}$ are on opposite sides of those walls.
Two rectangles can be adjacent to opposite sides of at most one vertical wall and at most one horizontal wall.
If these simultaneously occur, then the rectangles are part of a group of four rectangles that share a single vertex, contradicting the assumption that~$R_1$ is a generic rectangulation.  
Thus rectangles~$y_i$ and $y_{i+1}$ share a single wall, implying that exactly one wall shuffle of $R_1$ differs from the corresponding wall shuffle of $R_2$.
If the shared wall $W$ is horizontal, then since $y_{i+1}<y_i$ and the label of each rectangle above $W$ is smaller than the label of each rectangle below $W$, rectangle $y_{i+1}$ is above $W$ and rectangle~$y_i$ is below~$W$.  
Switching their order in $x$ to obtain $y$ results in the horizontal wall slide shown in the far right diagram of Figure \ref{fig:moves}.
Similarly, if the shared wall $W$ is vertical, since the label of each rectangle to the left of $W$ is smaller than the label of each rectangle to the right of $W$, rectangle $y_{i+1}$ is left of $W$ and rectangle $y_i$ is right of $W$.  
Switching their order results in the vertical wall slide illustrated in Figure \ref{fig:moves}.

Now assume that $\rho(x)\neq\rho(y)$.  
Theorem \ref{thm:coverindrec} implies that $\rho(x)$ and $\rho(y)$ are related by a diagonal pivot such that the pivoted edge is vertical in $\rho(x)$.
If there exist $a,b$ with $y_{i+1}<a,b<y_i$ such that $a$ occurs to the left of position $i$ in $y$ and $b$ occurs to the right of position $i+1$ in $y$, 
then~$x$ is obtained from $y$ by a $(2413\to 2143)$-move or a $(3412\to 3142)$-move.
 By Theorem \ref{thm:drecmove}, this implies that $\rho(x)=\rho(y)$, contradicting our initial assumption, so this cannot occur.
We now consider three remaining cases.  
In this proof, it will be convenient to use the correspondences established in the proof of \cite[Theorem 7.1]{drec} between each case and a specific diagonal pivot.\\
Case 1: $y_i = y_{i+1}+1$.  
In this case, $\rho(x)$ and~$\rho(y)$ are related by the \emph{diagonal} pivot shown in the leftmost diagram of Figure \ref{fig:moves}.
We will show that this implies that $R_1$ and $R_2$ are related by the \emph{generic} pivot shown in the leftmost diagram of Figure~\ref{fig:moves}.
Let $W_1$ denote a wall of~$\rho(x)$ (or equivalently a wall of  $R_1$) that is adjacent to neither rectangle $y_i$ nor rectangle~$y_{i+1}$ and~$W_2$ the corresponding wall of $\rho(y)$ (or  equivalently $R_2$).
Since $W_2$ is also not adjacent to either rectangle, $\sigma_{W_1}=\sigma_{W_2}$.  
Thus the wall shuffles of $R_1$ and $R_2$ differ only on walls adjacent to the union of rectangles $y_i$ and $y_{i+1}$. 
We consider each of the wall shuffles of $R_1$ containing $y_i$ or~$y_{i+1}$.    
In $R_1$, the wall shuffle associated to the pivoted edge is~$y_{i+1}y_i$ and in~$R_2$ it is~$y_iy_{i+1}$.
Now examine $\sigma_{W_{1b}}, \sigma_{W_{1a}}, \sigma_{W_{1l}},$ and $\sigma_{W_{1r}}$,  the wall shuffles of the walls below, above, to the left, and to the right of the union of rectangles~$y_i$ and $y_{i+1}$ in $R_1$.  
We compare these wall shuffles with $\sigma_{W_{2b}},\sigma_{W_{2a}}, \sigma_{W_{2l}},$ and~$\sigma_{W_{2r}}$, the corresponding wall shuffles in $R_2$, to demonstrate that they differ exactly as shown in Figure~\ref{fig:moves}.
Since $\sigma_{W_{1b}}$ and $\sigma_{W_{1a}}$ contain both $y_i$ and $y_{i+1}$, and since~$y_i$ and $y_{i+1}$ are adjacent in~$x$, they are also adjacent in~$\sigma_{W_{1b}}$ and $\sigma_{W_{1a}}$.
The wall shuffle of a horizontal wall records the ordering of the left edges of rectangles below the wall and the right edges of rectangles above the wall so the adjacency of $y_{i+1}$ and $y_i$ in these wall shuffles implies that no edge of $R_1$ is adjacent to the interior of the bottom of rectangle~$y_i$ or the top of rectangle~$y_{i+1}$.
Similarly, in $R_2$ no edge is adjacent to the interior of the left side of rectangle~$y_{i+1}$ or the right side of rectangle~$y_i$.
Since rectangle $y_{i+1}$ is not adjacent to~$W_{2b}$ and only $y_i$ and~$y_{i+1}$ are switched in~$y$, wall shuffle~$\sigma_{W_{2b}}$ is obtained by removing $y_{i+1}$ from~$\sigma_{W_{2b}}$.
Using the same argument, we see that: wall shuffle~$\sigma_{W_{1a}}$ is obtained by removing $y_i$ from~$\sigma_{W_{1a}}$, wall shuffle $\sigma_{W_{2l}}$ is obtained by inserting~$y_i$ immediately before $y_{i+1}$ in $\sigma_{W_{1l}}$, and wall shuffle~$\sigma_{W_{2r}}$ is obtained by inserting~$y_{i+1}$ immediately after~$y_i$ in $\sigma_{W_{1r}}$.
Thus the wall shuffles of $R_1$ and $R_2$ differ exactly as shown in the leftmost diagram of Figure \ref{fig:moves} and no walls are adjacent to the interior of any dashed segment.\\
Case 2:  $y_i>y_{i+1}+1$ and every $a$ with $y_{i+1}<a<y_i$ occurs to the right of position $i+1$ in~$y$.
In this case, $\rho(x)$ and $\rho(y)$ are related by the \emph{diagonal} pivot shown in the second diagram of Figure \ref{fig:moves}.
Now consider $R_1$ and $R_2$.
As in Case~1, only wall shuffles containing $y_i$ or~$y_{i+1}$ are effected by interchanging $y_i$ and $y_{i+1}$ in $x$ to obtain $y$.  
Again, 
examining each wall shuffle of~$R_1$ and relating it to the corresponding wall shuffle of $R_2$, we see that $R_1$ and $R_2$ are related as shown in the second diagram of Figure \ref{fig:moves}.\\
Case 3:  $y_i>y_{i+1}+1$ and every $a$ with $y_{i+1}<a<y_i$ occurs to the left of position~$i$ in $y$.
In this case, $\rho(x)$ and $\rho(y)$ are related by the \emph{diagonal} pivot shown in the third diagram of Figure \ref{fig:moves}, and this case is handled like Case 2.
\end{proof}

The other direction of Theorem \ref{thm: covers} follows from the following sequence of propositions.

\begin{proposition}\label{prop:pattern}
Let $y\in S_n$ and $y_iy_jy_ky_ly_m$ be an occurrence of the pattern \mbox{$3$-$5$-$1$-$4$-$2$} in $y$.  
If every $y_p$ satisfying $y_m<y_p<y_l$ occurs before $y_j$ in $y$, then $y$ contains an occurrence of the pattern $3$-$51$-$4$-$2.$
\end{proposition}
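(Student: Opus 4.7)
The plan is to find the desired $3$-$51$-$4$-$2$ occurrence by reusing $y_i$, $y_l$, and $y_m$ in the roles of the ``$3$,'' ``$4$,'' and ``$2$'' respectively, and by locating an adjacent pair of positions between $j$ and $k$ to play the role of the ``$51$'' block. The key observation is that as we walk from position $j$ to position $k$, the value at position $j$ exceeds $y_l$ while the value at position $k$ lies below $y_m$, so somewhere along the way an adjacent pair must cross from above $y_l$ to below $y_l$.

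Concretely, I would let $j'$ be the largest index in $\{j, j+1, \ldots, k-1\}$ with $y_{j'} > y_l$ (this set is nonempty since it contains $j$, as $y_j$ plays the role of ``$5$''), and set $k' := j' + 1$. By maximality of $j'$, we have $y_{k'} \le y_l$, and since the entries of $y$ are distinct, in fact $y_{k'} < y_l$.

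The heart of the argument is showing that $y_{k'} < y_m$. Suppose instead that $y_{k'} > y_m$. Then $y_m < y_{k'} < y_l$, so by hypothesis $y_{k'}$ must occur before $y_j$ in $y$, i.e.\ at a position strictly less than $j$. But $k' = j'+1 \geq j+1 > j$, a contradiction. Hence $y_{k'} < y_m$.

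With this in hand, it remains to verify that $y_i\,y_{j'}\,y_{k'}\,y_l\,y_m$ is an occurrence of $3$-$51$-$4$-$2$: the positions satisfy $i < j < j' < k' \le k < l < m$ with $k'=j'+1$, and the value chain $y_{j'} > y_l > y_i > y_m > y_{k'}$ matches the required relative order of the pattern. The only subtle step is the contradiction argument that forces $y_{k'}$ below $y_m$; everything else is bookkeeping about strict inequalities and the adjacency of $j'$ and $k'$.
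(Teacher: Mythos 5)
Your proof is correct and follows essentially the same route as the paper: both arguments locate an adjacent pair between positions $j$ and $k$ whose values straddle the interval $(y_m,y_l)$ (which the hypothesis forbids in that range), and reuse $y_i, y_l, y_m$ for the ``3,'' ``4,'' ``2.'' The paper takes the \emph{first} entry dropping below $y_m$ (after a two-case split) where you take the \emph{last} entry exceeding $y_l$, but the underlying idea is identical.
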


\begin{proof}
Since every $y_p$ such that $y_m<y_p<y_l$ occurs before $y_j$ in $y$,
each entry of~$y$ between $y_j$ and $y_k$ is either greater than $y_l$ or less than $y_m$.  
If every entry of~$y$ between $y_j$ and $y_k$ is greater than $y_l$, then the subsequence $y_iy_{k-1}y_ky_ly_m$ is an occurrence of the pattern $3$-$51$-$4$-$2$ in $y$.
Otherwise, let $y_q$ denote the first entry of $y$ between $y_j$ and $y_k$ such that $y_q<y_m$.
In this case, $y_iy_{q-1}y_qy_ly_m$ is an occurrence of the pattern $3$-$51$-$4$-$2$.
\end{proof}

\begin{proposition} \label{prop:gpiv1}
Let $R_1, R_2\in gRec_n$  such that $R_1$ and $R_2$ are related by a single generic pivot as shown in the leftmost diagram of Figure \ref{fig:moves} 
 with the lower illustration corresponding to $R_1$ and the upper illustration corresponding to $R_2$.
Then $R_1 \lessdot R_2$ in~$gRec_n$.
\end{proposition}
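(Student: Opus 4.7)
The plan is to exhibit permutations $x_1 \lessdot x_2$ in the right weak order on $S_n$ with $\gamma(x_1)=R_1$ and $\gamma(x_2)=R_2$, and then invoke Proposition \ref{prop:quotient} to conclude $R_1 \lessdot R_2$ in $gRec_n$.

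Let $k$ be the $\gamma$-label of the rectangle on the left in $R_1$ (equivalently, on top in $R_2$), and let $\ell$ be the label of the rectangle on the right in $R_1$ (equivalently, on the bottom in $R_2$). The first step is to argue that $\ell = k+1$. The dashed top of the left rectangle and the dashed bottom of the right rectangle in $R_1$ prevent any other rectangle from sitting along the diagonal between the two pivoted rectangles, forcing them to be consecutive along the diagonal of $S$ in any diagonal rectangulation underlying $R_1$; since the $\gamma$-labeling is determined by the diagonal order, the labels are consecutive.

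The main step is to construct $x_2 \in \gamma^{-1}(R_2)$ in which $k+1$ is immediately followed by $k$. The wall shuffle $\sigma_W$ of the horizontal wall $W$ between the two pivoted rectangles in $R_2$ has, locally, its left endpoint at the upper-left vertex of the lower rectangle (labeled $k+1$) and its right endpoint at the lower-right vertex of the upper rectangle (labeled $k$), so $k+1$ and $k$ appear as a consecutive pair within $\sigma_W$. Starting with $\psi(R_2) \in Cl_n^2$ and using the fiber-preserving moves of Theorem \ref{thm:grecmove} to transport intervening entries out of the slot between $k+1$ and $k$ within the fiber interval (which exists by Theorem \ref{fiber}), one obtains a permutation $x_2 \in \gamma^{-1}(R_2)$ where $k+1$ and $k$ occupy adjacent positions. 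This is the step I expect to be the main obstacle: it requires combining the local wall-shuffle data with the $2$-clumped pattern-avoidance conditions to verify that every intervening entry can in fact be pushed aside within the fiber.

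Once $x_2$ is in hand, let $x_1$ be obtained from $x_2$ by swapping these two adjacent entries, so that $x_1 \lessdot x_2$. Because this swap exchanges two consecutive integer values, it cannot match any of the fiber-preserving moves in Theorem \ref{thm:grecmove}: those moves swap the extreme (smallest and largest) entries of a five-entry pattern, and the three pattern values strictly between them must take distinct intermediate integer values---impossible for consecutive integers. Hence $\gamma(x_1) \neq R_2$, and Proposition \ref{prop:quotient} gives $\gamma(x_1) \lessdot R_2$ in $gRec_n$. Applying Proposition \ref{prop:part1} to the cover $x_1 \lessdot x_2$---specifically Case 1 of its proof, which handles the swap of consecutive integer values---identifies the cover as the leftmost generic pivot on the rectangles labeled $k$ and $k+1$. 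Since this pivot uniquely determines the rectangulation covered by $R_2$, we conclude $\gamma(x_1) = R_1$, completing the proof.
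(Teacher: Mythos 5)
Your overall strategy coincides with the paper's: exhibit a weak-order cover $x_1\lessdot x_2$ with $\gamma(x_2)=R_2$ and $\gamma(x_1)=R_1$, then pass to the quotient (the paper uses Proposition \ref{covers} with $y=\psi(R_2)$ where you use Proposition \ref{prop:quotient}). The peripheral steps are essentially fine: the two pivoted rectangles do carry consecutive labels $k$ and $k+1$, and your observation that a swap of adjacent \emph{consecutive values} can never be one of the four moves of Theorem \ref{thm:grecmove} (each needs three values strictly between the swapped pair) correctly gives $\gamma(x_1)\neq R_2$. The problem is that the step you yourself flag as ``the main obstacle'' --- producing some $x_2\in\gamma^{-1}(R_2)$ in which $k+1$ is immediately followed by $k$ --- is precisely the substance of the proposition, and you do not prove it. Since the fiber $\gamma^{-1}(R_2)$ is an interval whose internal covers are exactly the four moves of Theorem \ref{thm:grecmove}, ``transporting intervening entries out of the slot'' would require showing that, at every stage, each intervening entry sits inside one of those very restrictive five-letter patterns; there is no a priori reason this is possible, and no argument is offered.

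The paper closes this gap by working with the specific permutation $y=\psi(R_2)$ and proving adjacency holds there already: if $y_i$ labels the rectangle below the pivoted edge $E$ and $y_j$ the rectangle above, then $j=i+1$. That argument has real content: one shows the bottom and left side of rectangle $y_j$ lie in $T_i$ (using that the wall containing $E$ is $E$ itself), checks that appending $y_j$ immediately after $y_i$ respects all four surrounding wall shuffles $\sigma_{W_l},\sigma_{W_r},\sigma_{W_b},\sigma_{W_a}$ --- the $W_a$ case requiring Proposition \ref{prop:pattern} to manufacture a forbidden $3$-$51$-$4$-$2$ pattern --- and finally verifies via the greedy definition of $\psi$ that $y_j$ is the numerically smallest eligible next entry. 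You would need to supply an argument of comparable substance to make your construction of $x_2$ go through. Two smaller issues: Proposition \ref{prop:part1} is stated for $x_2\in Cl_n^2$, which your $x_2$ need not be after applying moves to $\psi(R_2)$; and the assertion that the generic pivot output is unique (so that $\gamma(x_1)$ must equal the given $R_1$), while true because the dashed-segment conditions pin down where the new vertices are placed, is asserted rather than justified.
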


\begin{proof}  Let $R_1$ and $R_2$ be generic rectangulations as described in the proposition and $E$ be the horizontal edge of $R_2$ that is pivoted to form $R_1$. 
Let $\psi(R_2)=y=y_1 \cdots y_n$, the unique element of $Cl_n^2$ such that $\gamma(y)=R_2$.
Label the rectangles directly below and above $E$ rectangle $y_i$ and rectangle $y_j$ respectively.  
 
As defined in the description of $\gamma$, let $T_i$ be the partial diagonal rectangulation obtained after the first $i$ steps in the construction of $\rho(y)=D_2$.
By the definition of $\psi(R_2)$, entry $y_{i+1}$ is the smallest element of~$\{y_{i+1},...,y_n\}$ such that the left side and bottom of rectangle $y_{i+1}$ are contained in $T_i$ and $y_1 \cdots y_{i+1}$ respects the wall shuffles of~$R_2$.
To show that~$R_1 \lessdot R_2$ in~$gRec_n$, we first demonstrate that $i+1=j$. 

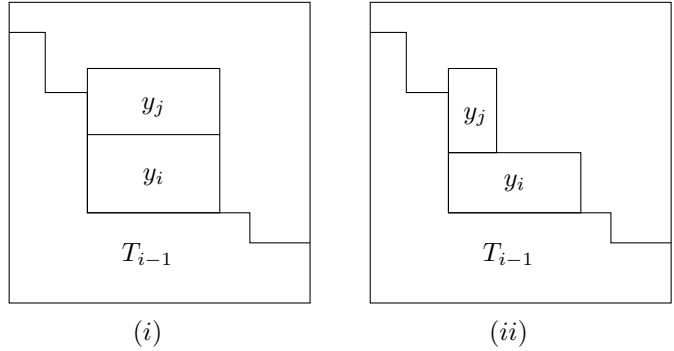
\begin{figure}
\begin{tikzpicture}[scale=.8]
\coordinate (A) at (0,0);
\coordinate (C) at (5,5);
\coordinate (B) at (1.3, 1.5);
\coordinate (D) at (3.5, 3.9);
\draw (A) rectangle (C);
\draw (0,4.5)--(.6,4.5)--(.6,3.5)--(1.3,3.5)--(1.3,1.5)--(4,1.5)--(4,1)--(5,1);
\draw (B) rectangle (D);
\draw (1.3, 2.8)--(3.5, 2.8);
\node at (2.4, 2.1) {$y_i$};
\node at (2.4, 3.25) {$y_j$};
\node at (2.3, .8) {$T_{i-1}$};

\node at (2.3, -.5) { $(i)$};

\coordinate (A) at (6,0);
\coordinate (C) at (11,5);
\coordinate (B) at (7.3, 1.5);
\coordinate (D) at (9.5, 3.9);
\draw (A) rectangle (C);
\draw (6,4.5)--(6.6,4.5)--(6.6,3.5)--(7.3,3.5)--(7.3,1.5)--(10,1.5)--(10,1)--(11,1);
\draw (7.3, 1.5) rectangle (9.5, 2.5); 
\draw (7.3, 2.5) rectangle (8.1, 3.9);
\node at (8.4, 2) {$y_i$};
\node at (7.75, 3.1) {$y_j$};
\node at (8.3, .8) {$T_{i-1}$};

\node at (8.3, -.5) {$(ii)$};

%

\end{tikzpicture}
\caption{Illustrations for the proofs of Propositions \ref{prop:gpiv1}-\ref{prop:ii}.}
\label{fig:D_2}
\end{figure}

To reach that goal, we will begin by showing that the bottom and left side of rectangle $y_j$ are contained in~$T_i$.  
Diagram (i) of Figure \ref{fig:D_2} illustrates a possible configuration of rectangles~$y_i$ and $y_j$ with respect to $T_{i-1}$ in $D_2$.
Since $R_2$ is a \emph{generic} rectangulation, the wall containing~$E$ is~$E$ itself.  
Rectangulations $R_2$ and~$D_2$ differ only by a sequence of wall slides and no wall slides can be performed along~$E$ so the top of rectangle $y_i$ and the bottom of rectangle $y_j$ coincide in~$D_2$.
Thus the bottom of rectangle $y_j$ is contained in $T_i$. 
To demonstrate that the left edge of rectangle $y_j$ is contained in $T_i$, assume for a contradiction that this is not the case (as illustrated in Diagram (i) of Figure \ref{fig:D_2}).  
Then there exists some rectangle~$y_p$ not contained in $T_i$, such that the right side of rectangle $y_p$ is adjacent to the left side of rectangle $y_j$ and the bottom of rectangle $y_p$ is contained in $T_i$.  
In~$y$, the entry~$y_p$ occurs after $y_i$ but before $y_j$.  
However, after wall slides are performed to obtain~$\gamma(y)=R_2$ from~$D_2$, this implies that the lower-right corner of rectangle~$y_p$ is contained in the interior of the left side of rectangle $y_j$, contradicting the assumption that~$E$ is a pivotable edge in~$R_2$.  

Now we show that adding rectangle $y_j$ to the partial rectangulation immediately after $y_i$ respects the wall shuffles of $R_2$.  
Let $W_l, W_r, W_b$ and $W_a$ be the walls respectively to the left of, to the right of, below, and above rectangle $y_j$ in Diagram~(i) of Figure \ref{fig:D_2}. 
Since only rectangles~$y_i$ and~$y_j$ border $W_b$, following $y_i$ immediately by~$y_j$ in $y$ respects this wall shuffle.
If there is some~$y_p$ between $y_i$ and $y_j$ in ${\sigma_{W_l}}$, then rectangle $y_p$ is on the left side of $W_l$ and in~$R_2$ the bottom right vertex of rectangle~$y_p$ is contained in the interior of the left side of rectangle~$y_j$, contradicting the assumption that  $E$ is pivotable.
The analogous argument shows that $y_i$ and~$y_j$ are adjacent in $\sigma_{W_r}$.  
Now consider $W_a$.
If rectangle $y_j$ is the lower leftmost rectangle on~$W_a$, then $y_j$ is the first entry of $\sigma_{W_a}$ so following $y_i$ immediately by~$y_j$ in~$y$ respects the wall shuffle of $W_a$.  
Otherwise, the upper-left vertex of rectangle~$y_j$ coincides with a vertex of $T_{i-1}$.
This case is illustrated in Figure~\ref{fig:W_a}.
Let rectangle $y_l$ be the rectangle contained in $T_{i-1}$ whose upper-right vertex coincides with the upper-left vertex of rectangle~$y_j$, let rectangle $y_p$ be the leftmost rectangle not contained in~$T_{i-1}$ such that the bottom of rectangle $y_p$ is contained in $W_a$, and let rectangle~$y_k$ be the rightmost rectangle such that the bottom of rectangle $y_k$ is contained in $W_a$.  
If following $y_i$ immediately by $y_j$ does not respect $\sigma_{W_a}$, then~$y_p$ precedes $y_j$ in $y$.
Since rectangle $y_k$ is the final rectangle above and adjacent to~$W_a$, entry $y_k$ follows~$y_j$ in~$y$.
Note that $y_p<y_k<y_l<y_j<y_i$ so $y_ly_iy_py_jy_k$ is an occurrence of the pattern $3$-$5$-$1$-$4$-$2$ in $y$.
Every rectangle $y_q$ with label satisfying $y_k<y_q<y_j$ is in $T_{i-1}$ because the label $y_q$ is on the diagonal of the square $S$ between labels $y_k$ and $y_j$.
Thus every such $y_q$ precedes $y_i$ in $y$.
By Proposition \ref{prop:pattern}, permutation $y$ contains a $3$-$51$-$4$-$2$ pattern, contradicting the assumption that $y=\psi(R_2)$.
 
\begin{figure}
\begin{tikzpicture}[scale=1.5]
\draw(0,6)--(1,6)--(1,5)--(3,5)--(3,4)--(5,4);
\node at (1.2,4.4) {$T_{i-1}$};
\draw(3,4.5)--(4.5,4.5)--(4.5,4);
\draw(3,5)--(4.5, 5)--(4.5, 4.5);
\node at (3.75, 4.25) {$y_i$};
\node at (3.75, 4.75) {$y_j$};
\draw(2.2, 5)--(2.2, 5.5);
\node at (2.4, 5.2) {$y_k$};
\draw(1,5.5)--(1.7, 5.5)--(1.7, 5);
\node at (1.35, 5.25) {$y_p$};
\node at (2.8, 4.8) {$y_l$};
\end{tikzpicture}
\caption{An illustration used in the proof of Proposition \ref{prop:gpiv1}}\label{fig:W_a}
\end{figure}
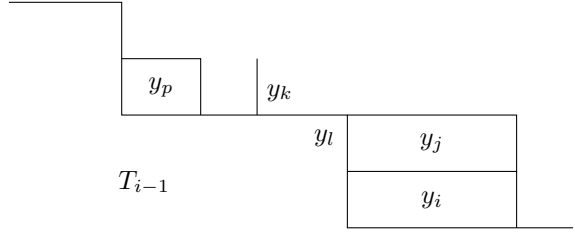
 
We have shown that the bottom and left side of rectangle $y_j$ are contained in~$T_i$ and adding rectangle $y_j$ to $T_i$ immediately after rectangle $y_i$ respects the wall shuffles of $R_2$. 
Next we demonstrate that $y_j$ is the smallest of $\{y_{i+1},...,y_n\}$ with these properties.  
Assume that there is some $y_p \in \{y_{i+1},...,y_n\}$ with these properties such that $y_p<y_j$ in numerical order.
As demonstrated in the previous paragraph, rectangle $y_p$ is not adjacent to $W_l$, the left wall of rectangle $y_j$ which is also the left wall of rectangle $y_i$.
Since $y_p<y_j$ in numerical order, rectangle $y_p$ contains a label above and to the left of the label for rectangle~$y_j$ so rectangle $y_p$ shares no walls with rectangle $y_i$.
Since the addition of rectangle $y_p$ to the partial rectangulation after rectangle $y_i$ respects the wall slides of $R_2$, this implies that the addition of rectangle~$y_p$ to the partial rectangulation immediately before rectangle~$y_i$ also respects the wall slides of $R_2$.
Because $y_p<y_j$ and the left and bottom sides of rectangle~$y_p$ are contained in $T_i$, the left and bottom sides of rectangle $y_p$ are also contained in $T_{i-1}$.
However, since $y_p<y_i$, this contradicts our choice of $y_i$ as the $i$th entry of $\psi(R_2)$, i.e. rectangle $y_p$ could have been added to~$T_{i-1}$ instead of rectangle~$y_i$.
Thus $j=i+1$.  
Observing that $\gamma(y_1 \cdots y_{j} y_i \cdots y_n)=R_1$ completes the proof. 
\end{proof}

\begin{proposition}\label{prop:ii}
Let $R_1, R_2\in gRec_n$ such that $R_1$ and $R_2$ are related by a single generic pivot as shown in the second diagram from the left in Figure \ref{fig:moves}
with the lower illustration corresponding to $R_1$ and the upper illustration corresponding to~$R_2$.
Then $R_1 \lessdot R_2$ in~$gRec_n$.
\end{proposition}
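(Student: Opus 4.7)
The plan is to mirror the proof of Proposition \ref{prop:gpiv1}. Set $y = \psi(R_2) \in Cl_n^2$, so $\gamma(y) = R_2$. Let $E$ be the horizontal edge of $R_2$ being pivoted, and label the wide rectangle below $E$ as rectangle $y_i$ and the small rectangle above $E$ as rectangle $y_j$, matching the second diagram of Figure \ref{fig:moves}. My goal is to prove $j = i + 1$, so that $x = y_1 \cdots y_{i-1} y_{i+1} y_i y_{i+2} \cdots y_n$ satisfies $x \lessdot y$ in the right weak order, and then to verify $\gamma(x) = R_1$ by re-running $\rho$ with the swap and checking the resulting diagram matches $R_1$. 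Proposition \ref{prop:quotient} then yields $R_1 \lessdot R_2$ in $gRec_n$. As in Proposition \ref{prop:gpiv1}, proving $j = i + 1$ reduces to verifying three properties in $D_2 = \rho(y)$: (a) the bottom and left side of $y_j$ are contained in $T_i$; (b) appending $y_j$ immediately after $y_i$ respects every wall shuffle of $R_2$; and (c) $y_j$ is the label-minimal element of $\{y_{i+1}, \ldots, y_n\}$ enjoying (a) and (b). The target configuration in $D_2$ is the one sketched in diagram (ii) of Figure \ref{fig:D_2}.

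For property (a), the left-side argument proceeds exactly as in Proposition \ref{prop:gpiv1}: if a rectangle $y_p \notin T_i$ had its right side against the left side of $y_j$ in $D_2$, then after wall slides the lower-right vertex of $y_p$ would land in the interior of the dashed left side of $y_j$ in $R_2$, contradicting the pivotability of $E$. The bottom-side claim is where the main geometric difference with the leftmost case appears. Unlike in Proposition \ref{prop:gpiv1}, the wall of $R_2$ containing $E$ is no longer just $E$ itself, since the top of $y_i$ extends to the right of $y_j$'s lower-right vertex; consequently, wall slides along this wall may occur in passing from $D_2$ to $R_2$, and the horizontal placement of $y_j$'s bottom could in principle differ between the two rectangulations. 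I would use the dashed constraint on the left side of $y_j$, combined with the $\rho$-placement rules, to pin down the lower-left corner of $y_j$ to sit at the upper-left corner of $y_i$ in $D_2$, so that $y_j$'s bottom still lies on the top of $y_i$ exactly as in diagram (ii).

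For property (b), I would check the four local walls bordering $y_i \cup y_j$ in turn, just as in the proof of Proposition \ref{prop:gpiv1}. The walls adjacent to $E$, the left side of $y_j$, and the bottom of $y_i$ respect the adjacency of $y_i$ and $y_j$ by a combination of the dashed-segment hypothesis and the sub-shuffle structure of $R_2$. The critical case -- and the main obstacle -- is the wall $W_a$ above $y_j$: if $y_j$ is not the leftmost entry of $\sigma_{W_a}$, I would construct a $3$-$5$-$1$-$4$-$2$ occurrence in $y$ along the same lines as in Proposition \ref{prop:gpiv1}, using the rectangle $y_l$ whose upper-right vertex coincides with the upper-left vertex of $y_j$, the leftmost rectangle $y_p$ above $W_a$ not in $T_{i-1}$, and the rightmost rectangle $y_k$ above $W_a$, and then invoke Proposition \ref{prop:pattern} to refine this to a $3$-$51$-$4$-$2$ occurrence, contradicting $y \in Cl_n^2$. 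Property (c) follows verbatim from the corresponding argument in Proposition \ref{prop:gpiv1}: any label-smaller candidate $y_p$ in $\{y_{i+1}, \ldots, y_n\}$ shares no wall with $y_i$, so it could have been placed before $y_i$, contradicting the minimality built into $\psi$. Combining (a)--(c) gives $j = i + 1$, completing the argument.
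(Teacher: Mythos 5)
Your proposal is correct and follows essentially the same route as the paper's proof: reduce to showing $j=i+1$ for $y=\psi(R_2)$ by verifying the containment, wall-shuffle, and minimality conditions, with the $W_a$ case handled via Proposition \ref{prop:pattern} exactly as in Proposition \ref{prop:gpiv1}. The one substantive divergence is in justifying that the bottom of rectangle $y_j$ lies in $T_i$: rather than pinning down the lower-left corner via the dashed constraint and the $\rho$-placement rules, the paper observes that diagonality of $D_2$ forces the lower-right vertex of rectangle $y_j$ to remain in the interior of the top of rectangle $y_i$ (switching it past the upper-right vertex of rectangle $y_i$ would destroy diagonality), which is the fact you actually need, since fixing the lower-left corner alone does not control where the lower-right corner of rectangle $y_j$ lands in $D_2$.
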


\begin{proof}
Let $D_2$ denote the diagonal rectangulation associated with $R_2$.
 As in the proof of Proposition \ref{prop:gpiv1}, let $E$ denote the horizontal edge of $R_2$ that is pivoted to form $R_1$ and let permutation $y=y_1 \cdots y_n=\psi(R_2)$.
Label the rectangle directly below $E$ with $y_i$ and the rectangle directly above $E$ with $y_j$.
Let $W_l, W_r, W_b,$ and $W_a$ refer to the walls respectively to the left of, to the right of, below, and above rectangle $y_j$ in $D_2$.
  As in the proof of Proposition \ref{prop:gpiv1}, we demonstrate that $i+1=j$. 

Diagram (ii) of Figure \ref{fig:D_2} shows a possible configuration of rectangles $y_i$ and $y_j$ with respect to $T_{i-1}$ in $D_2$.
In $D_2$, as in $R_2$, the upper-left vertex of rectangle $y_i$ and the lower left vertex of rectangle $y_j$ coincide.
Additionally, as in $R_2$, the lower-right vertex of rectangle~$y_j$ is contained in the interior of the top of rectangle $y_i$ in $D_2$.   
To see why the second statement is true, note that performing a wall slide to switch the relative locations of the lower-right vertex of rectangle $y_j$ and the upper-right vertex of rectangle $y_i$ results in a rectangulation which is not diagonal. 
Thus the bottom of rectangle $y_j$ is contained in $T_i$.
Arguments identical to those used in the proof of Proposition \ref{prop:gpiv1} show that the left edge of rectangle~$y_j$ is also contained in $T_i$ and that adding rectangle $y_j$ immediately following rectangle $y_i$ respects~$\sigma_{W_l}$ and $\sigma_{W_a}$. 
Since rectangle $y_j$ is the lowermost rectangle on the left side of $W_r$, the wall shuffle~$\sigma_{W_r}$ begins with $y_j$.
If $y_j$ does not immediately follow $y_i$ in $\sigma_{W_b}$, then rectangles $y_i$ and $y_j$ are not in the configuration shown in the second diagram of Figure \ref{fig:moves}.
Specifically, if~$y_j$ does not immediately follow $y_i$ in $\sigma_{W_b}$, then there exists some rectangle $y_p$ whose left side is adjacent to rectangle $y_i$ and whose top is contained in $W_b$.
Performing wall slides to obtain~$R_2$ from $D_2$, the lower-right corner of rectangle $y_j$ is not contained in the interior of the top of rectangle $y_i$.
Thus $y_1 \cdots y_i y_j$ respects the wall shuffles of $R_2$.
Again using the argument from the proof of Proposition \ref{prop:gpiv1}, we see that $y_j$ is the smallest element of $\{y_{i+1},...,y_n\}$ such that the walls of the corresponding rectangle are contained in $T_i$ and whose selection respects the wall shuffles of $R_2$ so $i+1=j$.
The proof is completed by observing that $\gamma(y_1\cdots y_{i+1} y_i \cdots y_n)=R_1$.
\end{proof}

We now describe four maps that will be used to complete the proofs of Theorem~\ref{thm: covers} and Theorem~\ref{thm:coproduct}.
Let $\rf$ be the automorphism of generic rectangulations of size $n$ that takes a generic rectangulation $R$ to the generic rectangulation $R'$ obtained by reflecting $R$ about the upper-left to lower-right diagonal of the square $S$.
Let $\rfu$ be the automorphism that takes a generic rectangulation $R$ to the generic rectangulation $R'$ obtained by reflecting $R$ about the lower-left to upper-right diagonal of $S$.
Let $\rp: S_n \to S_n$ denote the map on permutations that reverses the positions of entries in the one-line notation for a permutation.
Let $\rv: S_n \to S_n$ denote the map on permutations that reverses the values of the permutation, replacing each entry $x_i$ of the permutation $x$ with $n+1-x_i$.
For example, $\rp(34521)=12543$ and $\rv(34521)=32145$.

The maps $\rp$ and $\rv$ are antiautomorphisms of the right weak order on $S_n$.
As noted in \cite[Remark 6.5, Remark 6.10]{drec}, $\rf \circ \rho = \rho \circ \rp$ and $\rfu \circ \rho = \rho \circ \rv$.
Since applying $\rf$ to an arbitrary generic rectangulation reverses each wall shuffle, the wall shuffles of  rectangulation $\rf \circ \gamma $ agree with the wall shuffles of rectangulation~\mbox{$\gamma \circ \rp$}.
Thus $\rf \circ \gamma = \gamma \circ \rp$.
Additionally, given a generic rectangulation $R$ with wall shuffle $\sigma_W= x_{i_1} \cdots x_{i_p}$, the wall shuffle of the corresponding wall $W'$ in $\rfu(R)=R'$ is $\sigma_{W'}=(n+1-x_{i_1}) \cdots (n+1-x_{i_p})$.
Thus the wall shuffles of $\rfu \circ  \gamma$ agree with the wall shuffles of $\gamma \circ \rv$ so $\rfu \circ \gamma = \gamma \circ \rv$.

\begin{lemma}\label{lemma:rf}
The map $\rf$ is an antiautomorphism of the lattice of generic rectangulations.
\end{lemma}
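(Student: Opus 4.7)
The plan is to reduce the statement to known facts via the quotient structure. First I would note that $\rf$ is an involution on $gRec_n$ (reflecting about the upper-left to lower-right diagonal twice returns the original rectangulation), so in particular it is a bijection of the underlying set. What remains is to show that $\rf$ reverses the order, and since the partial order on the finite lattice $gRec_n$ is determined by its cover relations, it suffices to show that $R_1\lessdot R_2$ in $gRec_n$ if and only if $\rf(R_2)\lessdot \rf(R_1)$ in $gRec_n$.

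For the forward direction, I would invoke Proposition \ref{prop:quotient} to obtain $x_1,x_2\in S_n$ with $\gamma(x_1)=R_1$, $\gamma(x_2)=R_2$, and $x_1\lessdot x_2$ in the right weak order on $S_n$. Because $\rp$ was stated to be an antiautomorphism of the right weak order, we get $\rp(x_2)\lessdot \rp(x_1)$. Using the identity $\rf\circ\gamma=\gamma\circ\rp$ established immediately before the lemma, this becomes $\gamma(\rp(x_2))=\rf(R_2)$ and $\gamma(\rp(x_1))=\rf(R_1)$. Since $\rf$ is a bijection and $R_1\neq R_2$, the rectangulations $\rf(R_1)$ and $\rf(R_2)$ are distinct, so Proposition \ref{prop:quotient} applies in the other direction to yield $\rf(R_2)\lessdot \rf(R_1)$.

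For the reverse implication, I would simply apply the forward implication to $\rf(R_2)\lessdot \rf(R_1)$, obtaining $\rf(\rf(R_1))\lessdot \rf(\rf(R_2))$, and then use the involution property $\rf\circ\rf=\mathrm{id}$ to recover $R_1\lessdot R_2$. Together, these two implications show that $\rf$ is a bijection on $gRec_n$ that induces an order-reversing bijection on cover relations, hence is an antiautomorphism of the lattice.

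There is not really a hard step here: the entire argument is a bookkeeping transfer along the commutative square $\rf\circ\gamma=\gamma\circ\rp$, using Proposition \ref{prop:quotient} to translate between cover relations in the right weak order and cover relations in $gRec_n$. The only thing to be careful about is invoking Proposition \ref{prop:quotient} on \emph{distinct} rectangulations, which is why I explicitly verify that $\rf(R_1)\neq \rf(R_2)$ before applying it.
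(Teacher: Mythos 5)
Your proposal is correct and follows essentially the same route as the paper's proof: apply Proposition \ref{prop:quotient} to lift a cover $R_1\lessdot R_2$ to a cover $x_1\lessdot x_2$ in the right weak order, use that $\rp$ is an antiautomorphism together with $\rf\circ\gamma=\gamma\circ\rp$, check distinctness, and apply Proposition \ref{prop:quotient} again. Your handling of the converse via the involution property of $\rf$ is a clean way to phrase what the paper dispatches as ``an identical argument.''
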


\begin{proof}
Let $R_1, R_2\in gRec_n$ such that $R_1\lessdot R_2$ in $gRec_n$.  
By Proposition \ref{prop:quotient}, there exist $x_1, x_2\in S_n$ such that $\gamma(x_1)=R_1$,  $\gamma(x_2)=R_2$, and $x_1\lessdot x_2$ in the right weak order on~$S_n$.
Because $\rp$ is an antiautomorphism of the right weak order, we have that $\rp(x_1)\gtrdot \rp(x_2)$ in the right weak order.  
Since $\gamma(x_1)\neq\gamma(x_2)$, and $\rf\circ\gamma=\gamma\circ\rp$, we have that 
 \mbox{$\gamma(\rp(x_1))\neq \gamma(\rp(x_2))$.}
Again applying Proposition~\ref{prop:quotient}, we obtain $\gamma(\rp(x_1))\gtrdot \gamma(\rp(x_2))$ in $gRec_n$.
Since $\gamma\circ \rp=\rf\circ\gamma$, we conclude that $\rf(R_1)\gtrdot \rf(R_2)$ in $gRec_n$.
An identical argument shows that if $\rf(R_1)\gtrdot \rf(R_2)$ in $gRec_n$, then $R_1\lessdot  R_2$ in~$gRec_n$.
\end{proof}

\begin{proposition}\label{prop:iii}
Let $R_1, R_2\in gRec_n$ such that $R_1$ and $R_2$ are related by a single generic pivot as shown in the center diagram of Figure \ref{fig:moves}
with the lower illustration corresponding to~$R_1$ and the upper illustration corresponding to $R_2$. 
Then $R_1 \lessdot R_2$ in~$gRec_n$.
\end{proposition}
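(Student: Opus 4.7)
The strategy is to derive Proposition \ref{prop:iii} from Proposition \ref{prop:ii} via the reflection $\rf$, exploiting the antiautomorphism property established in Lemma \ref{lemma:rf}. The key observation is that $\rf$ carries the local configuration of the center diagram of Figure \ref{fig:moves} to the local configuration of the second diagram, while interchanging the lower and upper pictures.

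Concretely, suppose $R_1, R_2$ are as in the hypothesis. I would first verify, by a direct geometric computation, that $\rf(R_2)$ has the local configuration of the lower picture of the second diagram, while $\rf(R_1)$ has the local configuration of the upper picture. This amounts to tracking how $\rf$ acts on the five distinguished features of the center-diagram configuration: the tall right rectangle is sent to a wide bottom rectangle, the small top-left rectangle to itself, the implicit bottom-left rectangle to the position of the implicit top-right rectangle, the vertical pivoted edge to a horizontal pivoted edge, and the horizontal dashed segment to a vertical dashed segment (with the analogous check for the upper picture of the center diagram). Since $\rf$ carries edges of one rectangulation bijectively onto edges of the reflected rectangulation, the hypothesis that no edge meets the interior of a dashed segment transfers to the reflected configurations, so the hypotheses of Proposition \ref{prop:ii} are satisfied.

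Proposition \ref{prop:ii} applied to the pair $(\rf(R_2), \rf(R_1))$ then yields $\rf(R_2) \lessdot \rf(R_1)$ in $gRec_n$. Because $\rf$ is an involution and, by Lemma \ref{lemma:rf}, an antiautomorphism of $gRec_n$, this is equivalent to $R_1 \lessdot R_2$, as desired. The only real work in the proof is the case-by-case geometric bookkeeping in the second paragraph --- tracking each rectangle, the pivoted edge, and the dashed segment carefully in both the before- and after-pivot pictures --- but no additional ideas beyond those already available are needed.
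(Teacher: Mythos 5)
Your proposal is correct and follows essentially the same route as the paper: the paper likewise observes that $\rf(R_1)$ and $\rf(R_2)$ satisfy the hypotheses of Proposition \ref{prop:ii} with the roles of the upper and lower diagrams interchanged, concludes $\rf(R_2) \lessdot \rf(R_1)$, and then invokes Lemma \ref{lemma:rf} to obtain $R_1 \lessdot R_2$. The geometric bookkeeping you describe in your second paragraph is left implicit in the paper, but the underlying argument is identical.
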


\begin{proof}
Let $R_1$ and $R_2$ be generic rectangulations as described in the proposition.
Generic rectangulations $\rf(R_1)$ and $\rf(R_2)$ meet the conditions described in Proposition \ref{prop:ii} (with the lower diagram of Figure \ref{fig:moves} corresponding to $R_2$ and the upper diagram of Figure \ref{fig:moves} corresponding to $R_1$) so $\rf(R_1) \gtrdot \rf(R_2)$ in $gRec_n$.
Thus by Lemma \ref{lemma:rf}, $R_1 \lessdot R_2$ in~$gRec_n$.
\end{proof}

\begin{proposition}
\label{prop:wallslide}
Let $R_1, R_2\in gRec_n$ such that $R_1$ and $R_2$ are related by a single wall slide as shown in the fourth or fifth diagram of Figure~\ref{fig:moves}
with the lower illustration corresponding to $R_1$ and the upper illustration corresponding to $R_2$. 
Then $R_1 \lessdot R_2$ in~$gRec_n$.
\end{proposition}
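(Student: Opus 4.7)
The proof splits according to the two types of wall slides. The vertical case (fourth diagram of Figure \ref{fig:moves}) is handled directly, and the horizontal case (fifth diagram) follows by the antiautomorphism $\rf$ of Lemma \ref{lemma:rf}.

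For the vertical case, let $y = \psi(R_2) = y_1 \cdots y_n$ and let $W$ denote the vertical wall along which the slide occurs. Let $b \in [n]$ be the label of the rectangle on the left of $W$ whose lower-right corner lies at the upper vertex on $W$ in $R_2$, and let $c \in [n]$ be the label of the rectangle on the right of $W$ whose upper-left corner lies at the lower vertex on $W$ in $R_2$. Then $b < c$, and the wall shuffle $\sigma_W^{R_2}$ contains $c$ immediately before $b$, so $c$ appears before $b$ in $y$. The key claim is that $c$ and $b$ are consecutive entries of $y$: there exists $i$ with $y_i = c$ and $y_{i+1} = b$.

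The proof of this claim parallels that of Proposition \ref{prop:gpiv1}. Setting $y_i = c$, we verify that at step $i+1$ of the construction of $\psi(R_2)$: (a) the left and bottom of rectangle $b$ in $\rho(y)$ are contained in $T_i$; (b) the sequence $y_1 \cdots y_i b$ respects the wall shuffles of $R_2$; and (c) $b$ is the minimum entry of $\{y_{i+1}, \ldots, y_n\}$ satisfying (a) and (b). Conditions (a) and (b) follow from the adjacency of $b$ and $c$ in $\sigma_W^{R_2}$ together with the dashed-segment condition of the wall-slide configuration, which prevents any intervening obstruction near $W$ between the vertices labeled $c$ and $b$. Condition (c) is the principal technical obstacle; as in Proposition \ref{prop:gpiv1}, it is proved by contradiction: any smaller placeable candidate $y_p$ cannot share a wall with rectangle $c$ (by the local structure around $W$ imposed by the dashed-segment condition), and hence $y_p$ could have been selected at step $i$ in preference to $c$, contradicting the minimality of $y = \psi(R_2)$.

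Given the claim, define $x = y_1 \cdots y_{i-1}\, b\, c\, y_{i+2} \cdots y_n$, so that $x \lessdot y$ in the right weak order. Because $R_2$ is generic, rectangles $b$ and $c$ share only the wall $W$, so swapping the adjacent entries $c$ and $b$ in $y$ alters only the wall shuffle $\sigma_W$; the resulting wall shuffles are exactly those of $R_1$, whence $\gamma(x) = R_1$, and Proposition \ref{prop:quotient} yields $R_1 \lessdot R_2$ in $gRec_n$. For the horizontal case, let $R_1'$ and $R_2'$ be the fifth-diagram rectangulations. The reflection $\rf$ exchanges horizontal and vertical walls and sends up-going walls to left-going walls and down-going walls to right-going walls; a direct inspection confirms that $\rf(R_1')$ and $\rf(R_2')$ are respectively the upper and lower configurations of the fourth diagram. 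By the vertical case, $\rf(R_2') \lessdot \rf(R_1')$ in $gRec_n$, and by Lemma \ref{lemma:rf}, $R_1' \lessdot R_2'$ in $gRec_n$.
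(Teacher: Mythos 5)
Your overall architecture matches the paper's: identify the two rectangles $b=y_{w_{i+1}}$ and $c=y_{w_i}$ meeting at the sliding wall $W$, show they occupy consecutive positions of $y=\psi(R_2)$, swap them to obtain $x$ with $\gamma(x)=R_1$, and dispatch the horizontal case via $\rf$ and Lemma~\ref{lemma:rf}. The gap is in the central adjacency claim. You propose to replicate the argument of Proposition~\ref{prop:gpiv1}, but that argument leans on the fact that the two rectangles in a pivot configuration share an edge, so that (for instance) the bottom of the second rectangle is automatically contained in $T_i$ once the first is placed. In the wall-slide configuration, $b$ and $c$ lie on opposite sides of $W$ at different heights and do not touch: $b$ sits above and to the left, $c$ below and to the right. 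Your condition (a) --- that the left and bottom of rectangle $b$ lie in $T_i$ --- therefore amounts to the assertion that every rectangle below $b$'s bottom wall or left of $b$'s left wall precedes $c$ in $y$. That is a global statement about $y$, not a consequence of the local dashed-segment condition, and it is essentially equivalent to the adjacency claim you are trying to prove. Condition (c) is likewise asserted rather than proved: the claim that a smaller placeable candidate ``cannot share a wall with rectangle $c$'' has no analogue of the numerical comparison $y_p<y_j<y_i$ that drives the corresponding step of Proposition~\ref{prop:gpiv1}; here $b<y_{w_1}<y_{w_f}<c$, so a rectangle newly enabled by placing $c$ (e.g.\ one directly above $c$) could well carry a label smaller than $b$, and ruling this out requires an argument.

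The paper closes this gap by a different mechanism: it examines the entries $a_1\cdots a_l$ of $y$ lying strictly between $y_{w_i}$ and $y_{w_{i+1}}$, uses the wall shuffle $\sigma_{W_2}$ together with the inequalities $y_{w_{i+1}}<y_{w_1}<y_{w_f}<y_{w_i}$ to show that each $a_m$ must be either less than $y_{w_{i+1}}$ or greater than $y_{w_i}$, and then exhibits a forbidden pattern ($3$-$51$-$2$-$4$ or $2$-$4$-$51$-$3$) in either case, contradicting $y\in Cl_n^2$. You would need to supply this, or a complete substitute. A second, smaller omission: before concluding $\gamma(x)=R_1$ from ``only $\sigma_W$ changes,'' you must also verify $\rho(x)=\rho(y)$; the paper does this by observing that $y_{w_1}y_{w_i}y_{w_{i+1}}y_{w_f}$ is an occurrence of $2$-$41$-$3$ and invoking Theorem~\ref{thm:drecmove}.
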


\begin{proof}
First assume that $R_1$ and $R_2$ differ by a single vertical wall slide as shown in the fourth diagram of Figure \ref{fig:moves}.  
Let $W_1$ and $W_2$ respectively denote the walls in $R_1$ and $R_2$ on which the wall slide occurs. 
Let $\psi(R_2)=y=y_1 \cdots y_n$. 
We wish to find some $j$ such that interchanging $y_j$ and $y_{j+1}$ in $y$ results in a permutation $x$ with $\gamma(x)=R_1$.
Let $\sigma_{W_2}=y_{w_1}\cdots y_{w_i}y_{w_{i+1}}\cdots y_{w_f}$ be the wall shuffle of~$W_2$ and $\sigma_{W_1}=y_{w_1}\cdots y_{w_{i+1}}y_{w_{i}}\cdots y_{w_f}$ be the wall shuffle of $W_1$ as illustrated in Figure~\ref{fig:moves w/ labels}.
To prove that $R_1 \lessdot R_2$ in $gRec_n$, we will show that $y_{w_i}$ and $y_{w_{i+1}}$ are adjacent in~$y$ and that switching their locations in~$y$ results in a permutation $x$ such that $\gamma(x)=R_1$.
Using the definition of the map $\rho$, we observe that $y_{w_{i+1}}<y_{w_1}<y_{w_1}+1=y_{w_f}<y_{w_i}$.
Let~$a_1 \cdots a_l$ be the sequence of elements between~$y_{w_i}$ and~$y_{w_{i+1}}$ in~$y$.
Let $a_m$ be the last element of the sequence satisfying $y_{w_{i+1}}<a_m<y_{w_{1}}$, if such an entry exists.
If rectangle~$a_m$ were not adjacent to $W$, then by the definition of $\rho(y)$, rectangle $y_{w_{i+1}}$ would also not be adjacent to $W$. 
Thus, rectangle $a_m$ must be adjacent to $W$.
However, this implies that~$a_m$ occurs between $y_{w_{i}}$ and $y_{w_{i+1}}$ in $\sigma_{W_2}$, a contradiction. 
Now let $a_m$ be the first element of the sequence $a_1 \cdots a_l$ satisfying~$y_{w_{f}}<a_m<y_{w_i}$. 
Then, by the definition of $\rho$, the left side of rectangle~$a_m$ is contained in $W$.  
This implies that $a_m$ occurs between~$y_{w_i}$ and~$y_{w_{i+1}}$ in~$\sigma_{W_2}$, again a contradiction.
Thus every element of the sequence $a_1 \cdots a_l$ must be less than $y_{w_{i+1}}$ or greater than $y_{w_i}$.
Let~$a_m$ denote the first element of the sequence that satisfies $a_m<y_{w_{i+1}}$, if such an element exists.
In this case, (taking~$a_0=y_{w_i}$ if $m=1$) we reach a contradiction since $y_{w_1}a_{m-1}a_my_{w_{i+1}}y_{w_f}$ is an occurrence of the 3-51-2-4 pattern in~$y$.
Thus $a_m>y_{w_i}$ for all~$m$. 
However, this is also impossible since if $a_l \geq y_{w_i}$, then the subsequence~$y_{w_1}y_{w_i}a_ly_{w_{i+1}}y_{w_f}$ of~$y$ forms a 2-4-51-3 pattern in $y$.
Therefore, $y_{w_i}$ and~$y_{w_{i+1}}$ are adjacent in $y$.  
Let $x=y_1 \cdots y_{w_{i+1}}y_{w_i} \cdots y_n$.
Since $y_{w_1}y_{w_i}y_{w_{i+1}}y_f$ is an occurrence of the pattern \mbox{2-41-3} in $y$, by  Theorem \ref{thm:drecmove}, we have that $\rho(x)=\rho(y)$.
Now consider the wall shuffles of~$x$ and~$y$.    
Switching the order of $y_{w_i}$ and~$y_{w_{i+1}}$ in $y$ to obtain $x$ switches their order in the wall shuffle associated with~$W_2$ so $\sigma_{W_1}=y_{w_1}\cdots y_{w_{i+1}}y_{w_{i}}\cdots y_{w_f}$.
Every other wall shuffle of $R_2$ is unchanged in $\gamma(x)$ since $\rho(x)=\rho(y)$ and rectangles~$x_{w_i}$ and~$x_{w_{i+1}}$ are adjacent to no other shared wall.  
Thus~$\gamma(x)=R_1$.

\begin{figure}
\begin{tikzpicture}
\draw (12, .75) -- (13, .75);
\draw (13, 1.25) -- (14, 1.25);
\draw(12, -1)--(14, -1);
\draw(12, 3)--(14, 3);
\draw[dashed, line width = 1.5pt] (13, .75)--(13, 1.25);
\draw (13, -1) -- (13, .75);
\draw (13, 1.25) -- (13, 3);
\node(1) at (12.5,-.8) {$y_{w_1}$};
\node(2) at (13.5, 1) {$y_{w_i}$};
\node(3) at (12.5, 1) {$y_{w_{i+1}}$};
\node(4) at (13.5, 2.7) {$y_{w_f}$};
\node(5) at (13,-2) {$R_1$};
\draw (18, 1.25) -- (19, 1.25);
\draw (19, .75) -- (20, .75);
\draw(18, -1)--(20, -1);
\draw(18, 3)--(20, 3);
\draw[line width = 1.5pt, dashed] (19, .75)--(19, 1.25);
\draw (19, -1) -- (19, .75);
\draw (19, 1.25) -- (19, 3);
\node(1) at (18.5,-.8) {$y_{w_1}$};
\node(2) at (19.5, .5) {$y_{w_i}$};
\node(3) at (18.5, 1.5) {$y_{w_{i+1}}$};
\node(4) at (19.5, 2.7) {$y_{w_f}$};
\node(5) at (19,-2) {$R_2$};

\end{tikzpicture}
\caption{Diagrams used in the proof of Proposition \ref{prop:wallslide}.  
In each diagram, $y_{w_1}$ is the lowest rectangle on the left side of $W$ and $y_{w_f}$ is the uppermost rectangle on the right side of~$W$.   
No additional edges of $R_1$ or $R_2$ may be adjacent to the dashed segments.}
\label{fig:moves w/ labels}
\end{figure}
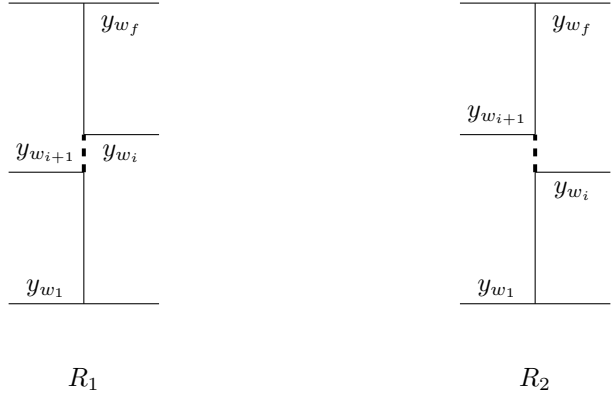

Now assume that $R_1$ and $R_2$ differ by a single horizontal wall slide such that the lower illustration of Figure \ref{fig:moves} corresponds to $R_1$ and the upper illustration corresponds to $R_2$.
By the definition of $\rf$, generic rectangulations $\rf(R_1)$ and $\rf(R_2)$ differ by a single vertical wall slide such that $\rf(R_1)$ contains the configuration shown in upper illustration of the fourth diagram of Figure \ref{fig:moves} and $\rf(R_2)$ contains the configuration in the lower illustration.
By the first part of this proof, $\rf(R_2)\lessdot \rf(R_1)$. 
Thus, by Lemma \ref{lemma:rf}, we have that $R_1 \lessdot R_2$.
\end{proof}

\section{The product and coproduct}
\label{sect:product}

In this section, we prove Theorems \ref{thm:product} and \ref{thm:coproduct}.

\begin{proof}[Proof of Theorem \ref{thm:product}] 
Let $x \in Cl_p^2$ and $y \in Cl_q^2$ such that $\gamma(x)=R_1$ and \mbox{$\gamma(y)=R_2$.}  
Corollary \ref{eq:prod} states that $x \bullet_{Cl_2} y =\sum [xy',y'x]$ where the summation denotes the sum of all elements of the interval $[xy',y'x]$ in the lattice of 2-clumped permutations of size $p+q$.  
Applying the bijection~$\gamma$ to this equation, we obtain $\gamma(x) \bullet_{gR} \gamma(y)=R_1\bullet_{gR} R_2=\sum[\gamma(xy'),\gamma(y'x)]$, where the summation denotes the sum of all elements of the interval $[\gamma(xy'),\gamma(y'x)]$ in $gRec_n$.
Applying $\gamma$ to $xy'$ and~$y'x$ results in the generic rectangulations~$R_1R_2'$ and $R_2'R_1$ respectively.  
\end{proof}
 
 To prove that the coproduct in $gRec$ is given by Theorem \ref{thm:coproduct} requires more work.
Applying~$\gamma$ to the equation in Theorem \ref{thm:Shcoprod} and noting that $\gamma(\pi_{\downarrow}^2(y))=\gamma(y)$ for any permutation~$y$, we first obtain the following corollary:
 
 \begin{corollary}\label{cor}
 Suppose $R\in gRec_n$ and $x\in Cl_n^2$ such that $\gamma(x)=R$.  Then
\begin{equation*}
\Delta_{gR}(R)=\sum_{\substack {T \text{ is good} \\ \text{with respect to } x}} I_T \otimes J_T
\end{equation*}
where $I_T$ is the sum of elements in the interval $[\gamma(\st(x_{\min}|_T)), \gamma(\st(x_{\max}|_T))]$ in $gRec_p$ and $J_T$ is the sum of elements in the interval $[\gamma(\st(x_{\min}|_{T^C})), \gamma(\st(x_{\max}|_{T^C}))]$ in $gRec_q$. 
\end{corollary}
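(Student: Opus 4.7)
The plan is to derive this corollary as a direct translation of Theorem~\ref{thm:Shcoprod} under the Hopf algebra isomorphism $\gamma\colon Cl^2 \to gRec$, using compatibility of $\gamma$ with the projection $\pi_{\downarrow}^2$. Since $\gamma$ intertwines the coproducts, we have $\Delta_{gR}(\gamma(x)) = (\gamma \otimes \gamma)(\Delta_{Cl^2}(x))$. Applying $\gamma \otimes \gamma$ termwise to the equation of Theorem~\ref{thm:Shcoprod} immediately gives $\Delta_{gR}(R) = \sum_{T \text{ good}} \gamma(I_T) \otimes \gamma(J_T)$, so the task reduces to showing that $\gamma(I_T)$ is the sum of generic rectangulations in the $gRec_p$-interval $[\gamma(\st(x_{\min}|_T)),\,\gamma(\st(x_{\max}|_T))]$, and similarly for $\gamma(J_T)$.

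For this identification I would invoke two facts already in play in the paper. First, $\gamma$ restricts to a lattice isomorphism between $Cl^2_p$, endowed with the partial order induced from the right weak order on $S_p$, and $gRec_p$: the fibres of $\gamma$ are the congruence classes of a lattice congruence on the right weak order, and by Proposition~\ref{prop:min2clumped} the unique minimum of each class is its 2-clumped element, so $Cl^2_p$ is a transversal realising the quotient lattice $gRec_p$. In particular, $\gamma$ sends intervals of $Cl^2_p$ bijectively to intervals of $gRec_p$. Second, $\gamma \circ \pi_{\downarrow}^2 = \gamma$ pointwise, because $\pi_{\downarrow}^2(y)$ lies in the same fibre of $\gamma$ as $y$. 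Consequently $\gamma(\pi_{\downarrow}^2(\st(x_{\max}|_T))) = \gamma(\st(x_{\max}|_T))$, which accounts for the disappearance of $\pi_{\downarrow}^2$ from the upper endpoint in the corollary's statement.

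The main subtlety I expect is verifying that the restriction of the weak-order interval $[\st(x_{\min}|_T),\,\pi_{\downarrow}^2(\st(x_{\max}|_T))]$ to $Cl^2_p$ has $\st(x_{\min}|_T)$ as its actual lower endpoint, which is equivalent to showing that $\st(x_{\min}|_T)$ is itself 2-clumped. I would argue this from the facts that $x_{\min}$ is 2-clumped by Proposition~\ref{prop:min2clumped} and that the 2-clumped forbidden patterns in $\theta$ are preserved when passing to a prefix and standardising: relative order and within-subsequence adjacency are both preserved under standardisation, so any occurrence of a pattern from $\theta$ in $\st(x_{\min}|_T)$ would lift to one in $x_{\min}$. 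Once this is established, the lattice isomorphism between $Cl^2_p$ and $gRec_p$ transports the restricted interval directly to $[\gamma(\st(x_{\min}|_T)),\,\gamma(\st(x_{\max}|_T))]$, and the analogous argument for $J_T$ on the complement $T^C$ completes the proof.
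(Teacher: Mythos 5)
Your proposal is correct and is essentially the paper's own derivation: the paper obtains the corollary simply by applying $\gamma$ to the equation of Theorem~\ref{thm:Shcoprod} and noting $\gamma\circ\pi_{\downarrow}^2=\gamma$, with the interval identification following from the fact that the fibres of $\gamma$ define a lattice congruence whose quotient is realised on the 2-clumped permutations. Your extra check that $\st(x_{\min}|_T)$ is 2-clumped (valid, since $x_{\min}|_T$ is a prefix of $x_{\min}$, so both adjacencies and relative order of any vincular pattern occurrence lift) is a harmless elaboration of what Theorem~\ref{thm:Shcoprod} already asserts implicitly.
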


Theorem \ref{thm:coproduct} will follow from Corollary \ref{cor}, and Lemmas~\ref{lemma:good}, \ref{lemma:coprod}, and \ref{lemma:coprod2}.
 In the proof of Lemma \ref{lemma:good}, we will demonstrate that for any $x\in Cl_n^2$ such that $\gamma(x)=R$ there is a natural correspondence between sets that are good with respect to $x$ and good paths in $R$.
Then, in the proofs of Lemmas \ref{lemma:coprod} and \ref{lemma:coprod2}, we will show that for each good set $T$ and corresponding good path $\Path$, we have $\gamma(\st(x_{\min}|_{T}))={R_l(\Path)}_|$,
$\gamma(\st(x_{\max}|_T))={R_l(\Path)}_-$,
$\gamma(\st(x_{\min}|_{T^C}))={R_u(\Path)}_|$, 
and $\gamma(\st(x_{\max}|_{T^C}))={R_u(\Path)}_-$.

We first make the following helpful observations about good sets.
Given $x \in Cl_n^2$ such that $\gamma(x)=R$, let $P$ be the partial order on $[n]$ such that the permutation $x'\in S_n$ is a linear extension of $P$ if and only if $\gamma(x')=R$.
We call $P$ the \emph{good set poset} of~$R$.
For each generic rectangulation, a good set poset exists because of a more general, well-known result.
 If $[a,b]$ is an interval in the right weak order on~$S_n$, then the elements of the interval are the linear extensions of the intersection of the total orders $a$ and $b$ (see for example \cite[Proposition 4.1]{Bax}).
Since each fiber of $\gamma$ forms an interval in the right weak order, for each generic rectangulation a good set poset exists.

An \emph{order ideal} $I$ of~$P$ is a subset of $P$ such that for each $x_i \in I$ if $x_j<_P x_i$, then $x_j\in I$.
The order ideals of the good set poset $P$ correspond exactly to the sets that are good with respect to $x$.
For each good set~$T$, let $P|_T$ denote the order ideal of~$P$ consisting of the elements of $T$.
The minimal linear extension of $P|_T$ is~$x_{\min}|_{T}$.
Similarly, the minimal linear extension of $P|_{T^C}$ is $x_{\min}|_{T^C}$, the maximal linear extension of $P|_{T}$ is $x_{\max}|_T$, and the maximal linear extension of~$P|_{T^C}$ is~$x_{\max}|_{T^C}$. 
To better understand the good sets associated with $x$, we describe the poset $P$. 


\begin{lemma}
 \label{lemma:P}
 Let $r_i$ and $r_j$ be rectangles of a generic rectangulation $R$ with $n$ rectangles, and~$P$ be the good set poset of $R$. 
 If $r_i$ comes before $r_j$ in some wall shuffle of $R$, then $r_i<_P r_j$. 
  Taking the transitive closure of these relations gives all of the relations in $P$.  
 \end{lemma}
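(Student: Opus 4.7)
The plan is to prove the two parts of the lemma separately.

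For the first part, note that every $x \in \gamma^{-1}(R)$ has the property that each wall shuffle of $R$ appears as a subsequence of $x$. This is built into the construction of $\gamma$ and is made explicit in the definition of $\psi$: each prefix $x_1 \cdots x_i$ of a permutation $x$ with $\gamma(x) = R$ must respect every wall shuffle of $R$. Therefore, if $r_i$ precedes $r_j$ in some wall shuffle $\sigma_W$, then $r_i$ precedes $r_j$ in every $x \in \gamma^{-1}(R)$, and by the definition of the good set poset this says exactly that $r_i <_P r_j$.

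For the second part, let $Q$ be the partial order on $[n]$ obtained as the transitive closure of the wall-shuffle-precedence relations. The first part gives $Q \subseteq P$, so every linear extension of $P$ (equivalently, every element of $\gamma^{-1}(R)$) is a linear extension of $Q$. Since a finite partial order is determined by its set of linear extensions, to conclude $P = Q$ it suffices to prove the reverse containment: every linear extension of $Q$ lies in $\gamma^{-1}(R)$.

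Given an arbitrary linear extension $x$ of $Q$, the strategy is to connect $x$ to $\psi(R)$ (which is itself a linear extension of $Q$) by a sequence of linear extensions of $Q$ such that consecutive terms differ by a single adjacent transposition. The existence of such a sequence follows from the standard fact that the graph on the set of linear extensions of a finite poset, with edges given by adjacent transpositions of incomparable elements, is connected. Each transposition in this sequence swaps two $Q$-incomparable elements; since two rectangles that share a wall co-occur in that wall's wall shuffle and are therefore $Q$-comparable, the swapped rectangles must in fact share no wall in $R$.

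It thus remains to show that if $x' \in \gamma^{-1}(R)$ and $y$ is obtained from $x'$ by an adjacent transposition of two entries whose corresponding rectangles share no wall in $R$, then $y \in \gamma^{-1}(R)$. This is the main obstacle. The plan is to appeal to Theorems \ref{thm:drecmove} and \ref{thm:grecmove}: every cover relation in the right weak order that preserves $\gamma$ is one of the six pattern moves listed there (two of which preserve $\rho$, and four of which change $\rho$ but not $\gamma$). The technical core of the proof is then to verify that in each of the six pattern configurations the swapped pair of entries corresponds to rectangles that share no wall in $\gamma(x')$, and conversely that any adjacent pair of entries in $x'$ whose rectangles share no wall must match one of the six pattern configurations. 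Both halves of this equivalence can be extracted by tracing through the construction of $\gamma$ at the walls and rectangles affected by each pattern type.
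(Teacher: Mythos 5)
Your first half is fine and matches the paper's easy direction, and your overall skeleton for the converse (connectivity of the linear-extension graph under adjacent transpositions of incomparable elements, plus a swap lemma) is a legitimate route that genuinely differs from the paper's. The paper instead writes down a poset $Q$ generated by the wall-shuffle precedences \emph{together with} the two adjacency conditions coming from the recursive construction of $\rho$ (right edge of $r_i$ meeting the interior of the left edge of $r_j$, and the analogous horizontal condition), observes that the linear extensions of $Q$ are exactly the fiber $\gamma^{-1}(R)$, and then shows the two adjacency generators are redundant: in a diagonal rectangulation every edge extending left of a vertical wall lies above every edge extending right of it, so any such adjacency forces one of the two rectangles to be the first or last entry of that wall's shuffle. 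That reduction is the actual content of the paper's proof, and it avoids the weak-order cover analysis entirely.

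The genuine gap in your version is the swap lemma, which is the entire technical content and which you defer with a sketch that is not correct as stated. Your parenthetical is backwards: the four moves of Theorem \ref{thm:grecmove} are precisely the $\gamma$-preserving covers, and each of them also preserves $\rho$ (no cover can change $\rho$ while preserving $\gamma$, since a generic rectangulation determines its underlying diagonal rectangulation); the two moves of Theorem \ref{thm:drecmove} preserve $\rho$ but in general change $\gamma$ by a wall slide. Consequently your claimed equivalence fails in one direction: in a $(3412\to 3142)$ move that is not one of the four five-letter moves, the swapped entries label rectangles that \emph{do} share a wall (the wall on which the slide occurs). What you actually need is: if the two rectangles share no wall, then the swap is one of the four $\gamma$-preserving moves. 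That splits into (i) showing the swap preserves $\rho$ — equivalently, ruling out that the pair falls into one of the diagonal-pivot cases, which requires the correspondence from \cite{drec} between the distribution of the intermediate values and adjacency of the two rectangles in $\rho(y)$ — and (ii) noting that once $\rho$ is preserved, only wall shuffles of walls adjacent to both rectangles can change, and there are none. Step (ii) is immediate, but step (i) is real work that "tracing through the construction of $\gamma$" does not supply, and the misidentification of which moves preserve what suggests the case analysis would go astray.
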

 
 \begin{proof}
Given two permutations $x$ and $x'$ in $S_n$, we have that $R=\gamma(x)=\gamma(x')$ if and only if $\rho(x)=\rho(x')$ and the wall shuffles of $\gamma(x)$ are the same as the wall shuffles of $\gamma(x')$.  
Let $\rho(x)=D$ 
 and define the poset $Q$ on $[n]$ by declaring $r_i<_{Q}r_j$ if:
 \begin{itemize}
 \item In $D$, the right edge of rectangle $r_i$ and the left edge of $r_j$ intersect in their interiors, 
 \item  In $D$, the top edge of rectangle $r_i$ and the bottom edge of rectangle $r_j$ intersect in their interiors, or 
 \item In some wall shuffle of $R$, the entry $r_i$ precedes $r_j$
\end{itemize}
and then taking the transitive closure.
The first two bullets in the definition of $Q$ ensure that if $x$ and $x'$ are linear extensions of $Q$, then $\rho(x)=\rho(x')$.
The third item ensures that the wall permutations of $x$ and $x'$ agree.
By the definition of $\gamma$, the permutation $x'$ is a linear extension of $Q$ if and only if $\gamma(x')=\gamma(x)$. 
Thus to prove the lemma, it suffices to demonstrate that $r_i<_Pr_j$ if and only if $r_i<_Qr_j$.

Since the condition for $r_i<_Pr_j$ is identical to the final condition for $r_i<_Q r_j$, we have that $r_i<_P r_j$ implies $r_i<_Q r_j$.
For the other direction, first assume that in $D$ the right edge of rectangle $r_i$ intersects the interior of the left edge of rectangle~$r_j$ (so $r_i<_Q r_j$) along some vertical wall $W$.  
 As illustrated in the left diagram of Figure \ref{fig:D}, since $D$ is a \emph{diagonal} rectangulation, each of the edges extending to the left of the wall is above each of the edges extending to the right of the wall.
 This implies that either rectangle $r_i$ is the lowermost rectangle on the left side of the vertical wall separating the two rectangles (shown as the darker shaded region in the diagram) or rectangle $r_j$ is the uppermost rectangle on the right side of the wall (shown as the lightly shaded region).
 This implies that $r_i$ is the first entry of $\sigma_W$ or~$r_j$ is the final entry of~$\sigma_W$ so in either case, $r_i <_P r_j$.   
 Similarly, as illustrated in the right diagram of Figure \ref{fig:D}, if the top edge of rectangle $r_i$ intersects the interior of the bottom edge of rectangle $r_j$ along some horizontal wall $W$ in $D$, then we again see that $r_i$ precedes $r_j$ in $\sigma_W$ so $r_i<_P r_j$.  
Since the final condition for~$r_i<_Q r_j$ is identical to the condition for $r_i<_P r_j$ and any relationship that comes from the transitive closure in $Q$ also holds in $P$, we have that $r_i<_Q r_j $ implies $ r_i<_P r_j$. 
 \end{proof}

 \begin{figure}
\begin{tikzpicture}
\filldraw[light-gray] (2, 1.3) rectangle (4, 4);
\filldraw[gray] (0,0) rectangle (2, 2.6);

\draw (2,0)--(2,4);
\draw (0,0)--(4,0);
\draw (0,4)--(4,4);
\draw (0,3.5)--(2,3.5);
\draw(0,2.8)--(2,2.8);
\draw (0,2.6)--(2,2.6);
\draw (2,1.3)--(4,1.3);
\draw(2, 1)--(4,1);

\filldraw[light-gray] (9.5,2) rectangle (12,4);
\filldraw[gray] (8,0) rectangle (10.4,2);

\draw (8,2)--(12,2);
\draw (8,0)--(8,4);
\draw (12,0)-- (12, 4);
\draw (9,2)--(9,4);
\draw (9.5,2)--(9.5,4);
\draw (11.2,0)--(11.2,2);
\draw(10.7, 0)--(10.7, 2);
\draw (10.4,0)--(10.4,2);
\end{tikzpicture}
\caption{An illustration used in the proof of Lemma \ref{lemma:P}.}
\label{fig:D}
\end{figure}
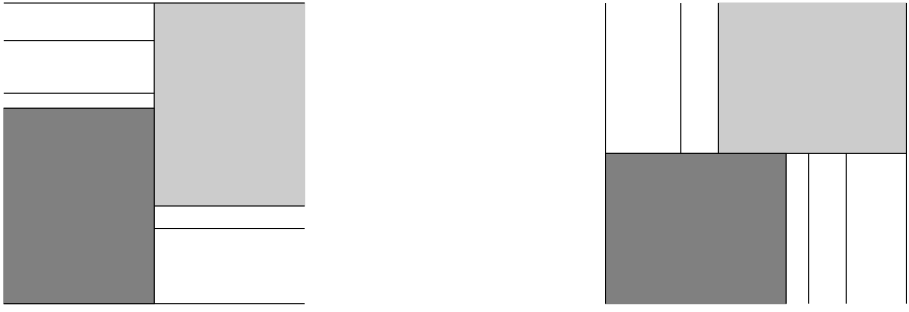 

 \begin{lemma}\label{lemma:good}
 Let $x\in Cl^2_n$ such that $\gamma(x)=R$.
 The set $T$ is good with respect to $x$ if and only if the union of the rectangles of $R$ labeled by elements of $T$ are exactly the rectangles below some good path $\Path$ in $R$.
 \end{lemma}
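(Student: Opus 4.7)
The plan is to translate both sides of the equivalence into statements about the good set poset $P$ of $R$ that was introduced in the discussion preceding the lemma, and then to use Lemma \ref{lemma:P}. Recall that the good sets with respect to $x$ are exactly the order ideals of $P$, and by Lemma \ref{lemma:P} the covering relations of $P$ are generated by $r_i <_P r_j$ whenever $r_i$ immediately precedes $r_j$ in some wall shuffle of $R$. So it is enough to prove that the rectangles below any good path form an order ideal of $P$, and conversely that every order ideal of $P$ is realized in this way.

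First I would prove the forward direction. Let $\Path$ be a good path, let $T$ be the set of labels of rectangles below $\Path$, and suppose $r_i$ immediately precedes $r_j$ in some wall shuffle $\sigma_W$ with $r_j \in T$; the goal is to show $r_i \in T$. If $W$ is vertical, the vertices contributing $r_i$ and $r_j$ to $\sigma_W$ are consecutive in bottom-to-top order on $W$: a rectangle on the left of $W$ contributes its lower-right vertex, and one on the right of $W$ contributes its upper-left vertex. If $r_i$ were above $\Path$ while $r_j$ were below $\Path$, then $\Path$ would have to contain a nontrivial vertical segment of $W$ separating these two vertices, and this segment would exhibit the first forbidden configuration in the definition of a good path (an upper-left vertex below a lower-right vertex in the interior of a vertical segment), contradicting goodness. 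The horizontal wall case is symmetric and uses the second forbidden configuration. Since $T$ is closed under all generating relations of $P$, it is closed under $<_P$.

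For the backward direction, I would construct $\Path$ from $T$ as the topological boundary between $A_T := \bigcup_{i \in T} r_i$ and its complement inside $S$, closed off by the appropriate portions of the boundary of $S$. The crux is to verify that this boundary is a single staircase path from the upper-left corner of $S$ to the lower-right corner, made of edges of $R$ and using only right and down steps. At each interior vertex where the boundary passes, at most three rectangles meet (since $R$ is generic), and the order ideal property applied through the wall shuffle description of $P$ rules out any ``up'' or ``left'' step in the boundary: such a step would exhibit a pair $r_i \in T$ and $r_j \notin T$ with $r_j <_P r_i$ arising from a wall shuffle. Once monotonicity and connectedness are established, I would verify that the two good path conditions hold by reversing the forward argument: any violation would again produce a wall-shuffle inequality violating the order ideal condition on $T$.

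The main obstacle is the backward direction, specifically the monotonicity of the boundary and its realization as a single path of edges of $R$. Matching which rectangles sit at each interior vertex against the wall-shuffle data requires a careful case analysis of the three possible local configurations at an interior vertex of a generic rectangulation. In particular, one must keep careful track of which vertices along a wall $W$ come from lower-right corners of rectangles versus upper-left corners of rectangles, since wall slides can permute their order along $W$, and the good path conditions are precisely what ensure that the order ideal $T$ is compatible with the actual vertex order in $R$ rather than merely with the diagonal rectangulation $\rho(x)$.
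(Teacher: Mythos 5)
Your proposal takes essentially the same route as the paper: identify good sets with order ideals of the good set poset $P$, use Lemma \ref{lemma:P} to reduce everything to wall-shuffle relations, realize the path as the upper-right boundary of the union of rectangles in $T$, and verify both directions by casework on the positions of the two rectangles relative to their shared wall. The only compression is in your first direction, where (depending on which sides of $W$ the rectangles $r_i$ and $r_j$ occupy) the contradiction is sometimes a left or up step of $\Path$ rather than the forbidden vertex configuration; this is precisely the case analysis the paper carries out.
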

 
 \begin{proof}
 Let $x\in Cl^2_n$ such that $\gamma(x)=R$, the poset $P$ be the good set poset of $R$, and $T=\{t_1,...,t_p\}$ be a good set with respect to $x$ (i.e. an order ideal of $P$).
 If $T=\emptyset$, then the path $\Path$ passing above and left of the rectangles of $R$ labeled by elements of $T$ travels down the left side and then across the bottom of the square~$S$.
 This is a good path in $R$.
 
 Now suppose that $T\neq \emptyset$.
 Let $R_T$ denote the set of rectangles of $R$ labeled by elements of $T$. 
 To show that $R_T$ is the set of rectangles below some good path, we will show that:
 \begin{itemize}
 \item $R_T$ contains the bottom, left vertex of $S$, 
 \item $R_T$ is a connected set with no interior holes, and
 \item the path $\Path$ starting at the top, left corner of $S$, traveling along the left edge of~$S$ until it reaches the boundary of $R_T$, tracing the upper right boundary of $R_T$, and then traveling along the bottom of $S$ to the bottom right corner of $S$ is a good path.
 \end{itemize}
 
Since $x$ can be obtained from any permutation $x'$ such that $\gamma(x')=R$ by a sequence of (24513$\to$24153),  (42513$\to$42153), (35124$\to$31524), and (35142$\to$31542) moves, the first entry of $x$ is also the first entry of $x'$.
Thus by the definition of $\gamma$, some rectangle of $R_T$ contains the bottom, left vertex of $S$.

If $R_T$ is not connected, has an interior hole, or $\Path$ contains a left or up step, then the left side or bottom of some rectangle $t_i\in R_T$ intersects the boundary of some rectangle $u$ such that $u \in [n]-T$.
The two leftmost diagrams of Figure \ref{fig:good} illustrate these cases.
In each of the diagrams of Figure \ref{fig:good}, the shaded rectangles are contained in $R_T$.
If the left side of rectangle $t_i$ intersects the right side of rectangle~$u$ along a vertical wall $W$ (as illustrated in the leftmost diagram of Figure \ref{fig:good}), then the lower-right vertex of rectangle $u$ is below the upper-left vertex of rectangle $t_i$ on $W$.
Note that the lower-right vertex of rectangle $u$ is not necessarily contained in the left side of rectangle $t_i$ as shown in the diagram, but it is necessarily below the upper-left vertex of rectangle $t_i$.
Thus $u$ precedes $t_i$ in $\sigma_W$, contradicting the assumption that $T$ is an order ideal of $P$.
Similarly, if the bottom of rectangle~$t_i$ intersects the top of rectangle $u$ along a horizontal wall~$W$ (as illustrated in the second diagram of Figure \ref{fig:good}), then the upper-left vertex of rectangle $u$ is left of the lower-right vertex of rectangle $t_i$ on~$W$.
This also contradicts the assumption that~$T$ is an order ideal of $P$.

To complete the argument, we show that $\Path$ meets the two conditions for a good path.
Assume, for a contradiction, that the interior of a vertical segment of $\Path$ contains vertices $v$ and $v'$ of $R$ such that $v$ is the upper-left vertex of a rectangle~$u$ with $u\notin T$, vertex $v'$ is the lower-right vertex of a rectangle $t_i \in R_T$ and $v$ is below~$v'$.
This configuration is illustrated in the third diagram of Figure \ref{fig:good}.
The thick segment in the diagram is contained in $\Path$.
Since the upper-left vertex of rectangle~$u$ occurs below the bottom right vertex of rectangle $t_i$ along their shared wall, entry $u$ precedes $t_i$ in the associated wall shuffle, contradicting the assumption that $T$ is a good set.
Using the same reasoning, we conclude that the configuration illustrated in the rightmost diagram of Figure \ref{fig:good} also does not occur along $\Path$, that is, the interior of no horizontal segment of $\Path$ contains vertices $h$ and~$h'$ of $R$ such that $h$ is the lower-right vertex of a rectangle $u\notin T$, $h'$ is the upper-left vertex of a rectangle $t_i\in R_T$ and $h$ is left of $h'$.  
Thus the upper right border of $R_T$ determines a good path in $R$.  

\begin{figure}
\begin{tikzpicture}[scale=.8]
\filldraw[light-gray] (1,0) rectangle (2,2);
\draw(2,0)--(1,0)--(1,2)--(2,2);
\draw(0,1)--(1,1);
\node at (.5, 1.5) {$u$};
\node at (1.5, 1) {$t_i$};

\filldraw[light-gray] (4,1) rectangle (6,2);
\draw (4,2)--(4,1)--(6,1)--(6,2);
\draw (5,1)--(5,0);
\node at (5, 1.5) {$t_i$};
\node at (5.5, .5) {$u$};

\filldraw[light-gray] (9,0) rectangle (8,2);
\draw[line width = 2pt] (9, 2)--(9,0);
\draw (8, 1.5)--(9,1.5);
\draw (9, .5)--(10, .5);
\filldraw (9,.5) circle [radius=2pt];
\node at (8.7, .5) {$v$};
\filldraw (9,1.5) circle [radius=2pt];
\node at (9.3, 1.6) {$v'$};
\node at (9.5, .2) {$u$};
\node at (8.5, 1.8) {$t_i$};

\filldraw[light-gray] (12,0) rectangle (14,1);
\draw[line width = 2pt] (12, 1)--(14, 1);
\draw (12.5, 1)--(12.5, 2);
\draw(13.5, 1)--(13.5, 0);
\filldraw(12.5, 1) circle [radius=2pt];
\filldraw (13.5, 1) circle [radius=2pt];
\node at (12.5, .7) {$h$};
\node at (13.5, 1.3) {$h'$};
\node at (12.2, 1.5) {$u$};
\node at (13.8, .5) {$t_i$};
\end{tikzpicture}\caption{Diagrams for the proof of Lemma \ref{lemma:good}.}\label{fig:good}
\end{figure}
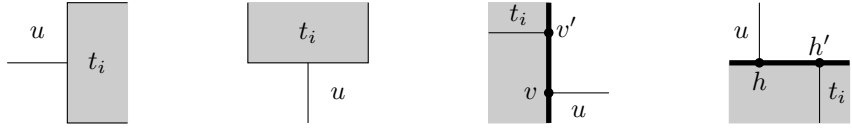  

Next we show that given any good path $\Path$ in $R$, the labels of the set of rectangles below and to the left of $\Path$, denoted by $T$, form a good set.
It is enough to demonstrate that $T$ is an order ideal of $P$, the good set poset of $R$.
For a contradiction, assume that $u\notin T$, $t_i\in T$, and $u$ precedes $t_i$ in $\sigma_W$, some wall shuffle of $R$. 
First let~$W$ be a vertical wall.
If rectangles~$t_i$ and $u$ are on the same side of $W$ or rectangle $u$ is on the left side of~$W$ while rectangle $t_i$ is on the right side of $W$, then~$\Path$ passes to the right of rectangle $t_i$ and then to the left of rectangle $u$, or below $u$ and then above $t_i$.
Thus $\Path$ contains a left step or an up step, a contradiction.
If rectangle $t_i$ is left of $W$ and rectangle $u$ is right of $W$, since the upper-left corner of rectangle $u$ is below the lower-right corner of rectangle $t_i$, we have that~$\Path$ contains a left step or violates the first condition of a good path.   
When $W$ is a horizontal wall, in each case we again reach a contradiction by showing that $\Path$ contains a left or up step, or violates the second condition of a good path.
%
 \end{proof}
 
For every good path $\Path$ of a generic rectangulation $R$, in the constructions of ${R_l(\Path)}_|$, ${R_l(\Path)}_-$, ${R_u(\Path)}_|$, and ${R_u(\Path)}_-$, the rectangles inherit a labeling (using the elements of $T$) from the labeling of $R$.
To simplify notation, in what follows, we do not standardize these labels.
In particular, when we refer to a permutation $x$ such that $\gamma(x)= {R_l(\Path)}_|$, this permutation~$x$ will be an ordering of the elements of~$T$ rather than an ordering of $\{1,...,|T|\}$.
To use $x$ to construct ${R_l(\Path)}_|$, we label the diagonal of $S$ with the elements of $T$ written in increasing order along the upper-left to bottom-right diagonal of $S$ and then construct $\gamma(x)$ as usual.
Additionally, we define the good set poset $P'$ of ${R_l(\Path)}_|$ to be the partial order on $T$ such that $x$ is a linear extension of $P'$ if and only if $\gamma(x)={R_l(\Path)}_|$.

Given a set $T$ that is good with respect to $x\in Cl_n^2$ such that $\gamma(x)=R$, we say that an ordering $t=t_1\cdots t_{|T|}$ of the elements of $T$ \emph{respects the ordering of the good set poset $P$ of $R$} if and only if there exists $x' =x_1' \cdots x_n' \in S_n$ such that $x_1' \cdots x_{|T|}'=t$ and $x'$ is a linear extension of $P$ (or equivalently $\gamma(x')=R$).
If some linear extension $t$ of a poset $P'$ respects the ordering of the good set poset $P$ of $R$ then we say that $P'$ is \emph{compatible} with~$P$.

\begin{lemma}\label{lemma:compat}
Let $R$ be a generic rectangulation, $\Path$ be a good path in $R$, poset $P$ be the good set poset of $R$, and $P'$ be the good set poset of $R_{l}(\Path)_|$.
Then $P'$ is compatible with $P$.
\end{lemma}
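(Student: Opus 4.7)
The plan is to exhibit an explicit ordering $t$ of $T$ that is both a linear extension of $P'$ and the $T$-prefix of a linear extension of $P$. By Lemma~\ref{lemma:good}, the set $T$ of labels of rectangles below $\Path$ is good with respect to any $x \in Cl_n^2$ with $\gamma(x) = R$, so there exists $x' = x_1' \cdots x_n' \in S_n$ with $\gamma(x') = R$ and $\{x_1', \ldots, x_p'\} = T$. I will take $t := x_1' \cdots x_p'$. Since $t$ is by construction the prefix of the linear extension $x'$ of $P$, compatibility will follow once I verify that $t$ is a linear extension of $P'$.

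By Lemma~\ref{lemma:P} applied to $R_l(\Path)_|$, the poset $P'$ is generated (under transitive closure) by the relations $r_i <_{P'} r_j$ whenever $r_i$ precedes $r_j$ in some wall shuffle of $R_l(\Path)_|$. Thus the remaining task reduces to showing that for every wall $W'$ of $R_l(\Path)_|$, the wall shuffle $\sigma_{W'}$ appears as a subsequence of $x'$. The plan is to deduce this from the following structural claim: for each wall $W'$ of $R_l(\Path)_|$, obtained from a wall $W$ of $R$ via the construction, the relative order of any two rectangles in $\sigma_{W'}$ is forced by a chain of wall shuffle relations in $R$ (possibly passing through intermediate $T^C$-rectangles).

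To prove the structural claim I will trace through the four steps of the construction of $R_l(\Path)_|$. The order of the $R_l(\Path)$-vertices on $W$ is preserved on $W'$, since the wall slides of step~4 only exchange constructed vertices with $R_l(\Path)$-vertices and never change the mutual order of $R_l(\Path)$-vertices themselves. For the constructed vertices, corresponding to rectangles $r \in T$ whose adjacency to $W'$ arises from the extensions in steps 2 and 3, the step-4 rule places $r$ immediately next to the uppermost (resp.\ rightmost on a horizontal wall) $R_l(\Path)$-right-endpoint on $W$. I will verify that this placement agrees with the order given by $\sigma_W$-chains involving $r$: in $R$, the extended edge of $r$ meets $W$ only after passing through $T^C$-rectangles, so the step-4 rule yields precisely the order of $\sigma_W$ after the $T^C$-entries are deleted. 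Granting the claim, since $x'$ respects every wall shuffle of $R$, it respects every such chain, hence respects $\sigma_{W'}$; the prefix $t$ inherits this property.

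The hard part will be the structural claim, specifically the case analysis for constructed vertices. For each rectangle $r \in T$ adjacent to $W'$ through an extended edge, I will need a geometric argument showing that the step-4 placement rule agrees with the chain-order dictated by $R$. This will rely essentially on the good-path property of $\Path$ (via Lemma~\ref{lemma:good} and the configurations ruled out by Figure~\ref{fig:path}), which constrains how $T$- and $T^C$-rectangles interleave along any wall of $R$ and so prevents configurations incompatible with the placement rule in step~4.
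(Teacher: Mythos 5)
There is a genuine gap: your central structural claim is false, and the inclusion you are trying to establish between the two posets goes in the wrong direction. You want to show that every relation of $P'$ is forced by chains of wall-shuffle relations in $R$, i.e.\ that $P'$ is coarser than the restriction $P|_T$, so that \emph{any} ordering $t$ of $T$ extendable to a linear extension of $P$ is automatically a linear extension of $P'$. But the completion construction creates new adjacencies, hence new wall-shuffle relations, that are not implied by $P$. The paper's own Figure~\ref{fig:completions} is a counterexample: there $T=\{1,3,4,5,7,8,9,10\}$, and in ${R_l(\Path)}_|$ rectangles $1$ and $10$ become adjacent across the vertical wall whose wall shuffle is $8,7,4,1,10,9$, so $1<_{P'}10$ by Lemma~\ref{lemma:P}; yet in $R$ the up-set of $1$ in $P$ is $\{2,6\}$, so $1$ and $10$ are incomparable in $P$. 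Consequently your $t$ fails for some admissible choices of $x'$: the permutation $x'=5\,8\,7\,10\,3\,4\,1\,9\,2\,6$ respects every wall shuffle of $R$, hence is a linear extension of $P$ whose first eight entries are the elements of $T$, but its prefix puts $10$ before $1$ and so is not a linear extension of $P'$ (its image under $\gamma\circ\st$ is not ${R_l(\Path)}_|$; it is on the way to ${R_l(\Path)}_-$). To rescue your strategy you would have to take the \emph{specific} prefix $x_{\min}|_T$ and prove $\gamma(\st(x_{\min}|_T))={R_l(\Path)}_|$ --- but that is precisely Lemma~\ref{lemma:coprod}, which the paper deduces \emph{from} the present lemma together with Lemma~\ref{lemma:incompat}, so this route is essentially circular.

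The inclusion that is actually true, and that the paper proves, is the reverse one: if $r_i,r_j\in T$ with $r_i<_P r_j$, then $r_i<_{P'}r_j$. This is checked on cover relations $r_i\lessdot_P r_j$ (i.e.\ $r_i$ immediately preceding $r_j$ in a wall shuffle of $R$) by a case analysis of how the configuration of $r_i$ and $r_j$ along their shared wall survives the four steps of the construction of ${R_l(\Path)}_|$. With $P|_T\subseteq P'$, every linear extension of the \emph{finer} poset $P'$ is a linear extension of $P|_T$, and since $T$ is an order ideal of $P$ any such ordering of $T$ extends to a linear extension of $P$; that is exactly what compatibility requires. (A minor additional point: your parenthetical about chains passing through intermediate $T^C$-rectangles is vacuous, since $T$ is an order ideal of $P$ and so no element of $T^C$ can lie below an element of $T$ in $P$.)
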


\begin{proof}
Let $T$ be the good set corresponding to $\Path$ (which exists by Lemma \ref{lemma:good}).
Assume that $P'$ is \emph{not} compatible with $P$ so there does not exist a linear extension of $P'$ that respects the ordering of $P$.
Since $T$ is a good set with respect to $R$, there exists an ordering of the elements of $T$ that respects the ordering of $P$.
If none of these orderings is a linear extension of $P'$ then there exist $r_j<_{P'} r_i$ such that $r_i<_P r_j$.
Below we show that this cannot occur by demonstrating that if $r_i, r_j \in T$ such that $r_i<_P r_j$ then $r_i<_{P'}r_j$.

To show that $r_i<_P r_j$ implies $r_i<_{P'} r_j$, it suffices to prove that if $r_i\lessdot_P r_j$ then $r_i<_{P'} r_j$.
Assume that $r_i, r_j\in T$ and $r_i \lessdot_P r_j$.
By Lemma \ref{lemma:P}, this implies that~$r_i$ immediately precedes $r_j$ in some wall shuffle $\sigma_W$ of $R$.
We consider cases and make use of the construction of ${R_l(\Path)}_|$.  

If rectangles $r_i$ and $r_j$ are on the same side of $W$, then the rectangles are adjacent, with rectangle $r_i$ left of or below rectangle $r_j$.
Assume that rectangles $r_i$ and $r_j$ are both above a horizontal wall $W$.
Since $r_i$ and $r_j$ are in $T$, path $\Path$ passes above both rectangles, so the bottom of both rectangles and the edge separating them are contained in $R_l(\Path)$.
Thus in $R_l(\Path)_|$, rectangles $r_i$ and $r_j$ are adjacent to a horizontal wall and $r_i$ precedes $r_j$ in that wall shuffle.
Therefore $r_i<_{P'}r_j$.
If rectangles $r_i$ and $r_j$ are both right of a vertical wall, the argument is similar.
Now assume that rectangles $r_i$ and $r_j$ are both below a horizontal wall~$W$.
Since~$r_i$ immediately precedes $r_j$ in $\sigma_W$, no vertical edge extends from the top of rectangle $r_i$.
Thus either path $\Path$ contains no part of the top of rectangle $r_i$ or $\Path$ contains the tops of rectangles $r_i$ and $r_j$.  
If $\Path$ contains no part of the top of rectangle $r_i$, then the top of rectangle $r_i$, part or all of the top of rectangle $r_j$, and the edge separating rectangles~$r_i$ and $r_j$ remain in $R_l(\Path)$.
To construct $R_l(\Path)_|$, the remaining portion of the top of rectangle $r_j$ is extended until it meets a vertical wall. 
Rectangles~$r_i$ and~$r_j$ are adjacent to the horizontal wall containing this extension.
Thus, $r_i$ precedes $r_j$ in this wall shuffle of $R_l(\Path)_|$ so $r_i<_{P'} r_j$.
If $\Path$ contains the tops of rectangles~$r_i$ and~$r_j$, then~$\Path$ contains points left of the upper-left corner of rectangle $r_i$ (as shown in the left diagram of Figure \ref{Rl-}) or the upper-left corner of rectangle $r_i$ is a vertex of~$\Path$ (as shown in the right diagram of Figure~\ref{Rl-}).
In these illustrations, two possible locations of $\Path$ are darkened.
The dotted segment of the second diagram may or may not be present in $R$.
In either of these cases, in $R_l(\Path)_|$ rectangle~$r_j$ extends to the top of~$S$ and rectangle $r_i$ is adjacent to the wall~$W'$ containing the left side of rectangle $r_j$.
Since rectangle $r_j$ is the uppermost rectangle on the right side of~$W'$, the final entry of $\sigma_{W'}$ is $r_j$.
Thus $r_i$ precedes $r_j$ in $\sigma_{W'}$ and so $r_i <_{P'} r_j$.   
If rectangles $r_i$ and $r_j$ are both adjacent to the left side of a vertical wall $W$, since $r_i$ and $r_j$ are adjacent in $\sigma_W$, no edge of $R$ extends from the right side of rectangle $r_i$.
Thus, regardless of the location of~$\Path$, in $R_l(\Path)_|$ the right sides of rectangles $r_i$ and~$r_j$ are contained in a single vertical wall and rectangle $r_i$ remains below rectangle $r_j$.
Therefore $r_i<_{P'} r_j$.

\begin{figure}  
\begin{tikzpicture}
\draw[line width = 2pt] (.5,0) -- (3,0);
\draw (1,-1)--(1,0);
\draw (2,-1)--(2,0);
\node at (1.5, -.5) {$r_i$};
\node at (2.5, -.5) {$r_j$};

\draw[line width = 2pt] (5,1)--(5,0)--(7,0);
\draw(5,-1)--(5,0);
\draw (6,-1)--(6,0);
\draw[thick, dotted] (4.5, .5)--(5, .5);
\node at (5.5, -.5) {$r_i$};
\node at (6.5, -.5) {$r_j$};
\end{tikzpicture}  
\caption{Configurations of rectangles $r_i$ and $r_j$ used in the proof of Lemma \ref{lemma:compat}.} \label{Rl-}
\end{figure}
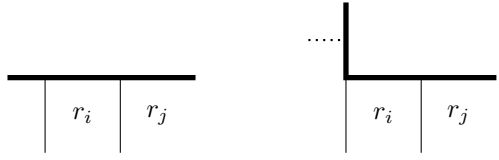

Now consider the case where rectangles $r_i$ and $r_j$ are on opposite sides of $W$.
If $W$ is horizontal, then rectangles $r_i$ and $r_j$ are in one of the two leftmost configurations shown in Figure \ref{Rl-2}.
Since~$r_j$ immediately follows $r_i$ in $\sigma_W$, no other edge can be adjacent to the dashed segment in the second diagram.
If rectangles $r_i$ and $r_j$ are in the first configuration of Figure~\ref{Rl-2}, then, regardless of the location of $\Path$, the left edge of rectangle $r_i$ and the horizontal edge between the rectangles remain in $R_l(\Path)$.
Thus by construction, rectangles $r_i$ and $r_j$ remain adjacent to $W$ in $R_l(\Path)_|$ with the upper-left vertex of rectangle $r_i$ to the left of the lower-right vertex of rectangle~$r_j$ so $r_i <_{P'} r_j$.
If rectangles $r_i$ and $r_j$ are in the second configuration of Figure \ref{Rl-2}, then we consider two cases.
First, if some part of the right side of rectangle $r_i$ is contained in $R_l(\Path)$ then some part of the top of rectangle $r_j$ is also contained in~$R_l(\Path)$.
Thus in $R_l(\Path)_|$, rectangles $r_i$ and $r_j$ remain adjacent to the extension of~$W$ with the lower-right vertex of rectangle $r_i$ to the left of the upper-left vertex of rectangle $r_j$.
Therefore $r_i <_{P'} r_j$.
If the right side of rectangle $r_i$ is contained in~$\Path$, then the dashed segment and the top of rectangle~$r_j$ are also contained in $\Path$.
In $R_l(\Path)_|$, the left edge of rectangle $r_j$ is extended to the top of~$S$ and the bottom of rectangle $r_i$ is extended to meet this vertical edge. 
Thus rectangles $r_i$ and $r_j$ are adjacent to opposite sides of a vertical edge of $R_l(\Path)_|$ with the bottom right vertex of rectangle~$r_i$ below the top left of rectangle $r_j$.
Therefore $r_i<_{P'}r_j$.

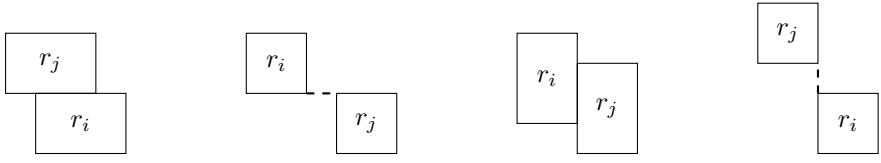
\begin{figure}
\begin{tikzpicture}[scale=.8]
\draw (0.5,1) rectangle (2,2);
\draw (1,0) rectangle (2.5,1);
\node at (1.75,.5) {$r_i$};
\node at (1.25,1.5) {$r_j$};

\draw (4.5,1) rectangle (5.5,2);
\draw (6,0) rectangle (7, 1);
\node at (5, 1.5) {$r_i$};
\node at (6.5, .5) {$r_j$};
\draw[thick, dashed] (5.5, 1)--(6,1);

\draw (10,0) rectangle (11,1.5);
\draw (9,.5) rectangle (10,2);
\node at (10.5, .75) {$r_j$};
\node at (9.5, 1.25) {$r_i$};

\draw(14,0) rectangle (15,1);
\draw (13, 1.5) rectangle (14, 2.5);
\node at (14.5, .5) {$r_i$};
\node at (13.5, 2) {$r_j$};
\draw[thick, dashed
] (14, 1)--(14,1.5);
\end{tikzpicture}
\caption{Additional illustrations used in the proof of Lemma \ref{lemma:compat}.}
\label{Rl-2}
\end{figure}

If rectangles $r_i$ and $r_j$ are on opposite sides of a vertical wall $W$, then they form one of the configurations shown in third or fourth diagram of Figure \ref{Rl-2}.
If they form the configuration shown in the third diagram, then the bottom edge of rectangle $r_i$ and the edge between the rectangles remain in $R_l(\Path)$.
Thus in $R_l(\Path)_|$, rectangles $r_i$ and $r_j$ are adjacent to the extension of $W$ with the bottom right vertex of rectangle~$r_i$ below the top left vertex of rectangle $r_j$ so $r_i<_{P'}r_j$.
If rectangles~$r_i$ and~$r_j$ form the configuration shown in the final diagram of Figure~\ref{Rl-2} and some part of the right side of rectangle $r_j$ is not contained in $\Path$, then some part of the top of rectangle~$r_i$ is also not contained in $\Path$.
In $R_l(\Path)_|$, the top edge of rectangle~$r_i$ remains below the bottom edge of rectangle $r_j$ so $r_i<_{P'}r_j$.
If instead the right side of rectangle $r_j$ is contained in~$\Path$, then the top of rectangle $r_i$ is also contained in~$\Path$.
In the construction of $R_l(\Path)_|$, the extension of the left side of rectangle $r_i$ is stopped by the $\epsilon$ extension of the bottom edge of rectangle~$r_j$.
Thus in $R_l(\Path)_|$, rectangles $r_i$ and $r_j$ are adjacent to the horizontal wall containing the extension of the bottom edge of rectangle~$r_j$ with the upper-left vertex of rectangle~$r_i$ left of the lower-right vertex of rectangle $r_j$ so $r_i<_{P'} r_j$.  
\end{proof}

\begin{lemma}\label{lemma:incompat}
Let $R\in gRec_n,$ let $P$ be the good set poset of $R$, and let $\Path$ be a good path in~$R$.
Let $\widetilde{R} \lessdot R_{l}(\Path)_|$ in $gRec_{|T|}$ and $\widetilde{P}$ be the good set poset of $\widetilde{R}$.
Then $\widetilde{P}$ is \emph{not} compatible with~$P$.
\end{lemma}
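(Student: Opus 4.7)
The approach is to exhibit, for every possible realization of the cover $\widetilde{R} \lessdot R_l(\Path)_|$, a pair of rectangles $r_i, r_j \in T$ such that $r_j <_{\widetilde{P}} r_i$ while $r_i <_P r_j$. Any such pair precludes a common linear extension of $\widetilde{P}$ and $P|_T$, which is precisely the failure of compatibility.

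By Theorem \ref{thm: covers}, the covering $\widetilde{R} \lessdot R_l(\Path)_|$ is realized by one of five local moves from Figure \ref{fig:moves}: three generic pivots or two wall slides. In each case the move involves exactly two rectangles of $T$, whose relative arrangement is reversed between the ``lower'' image ($\widetilde{R}$) and the ``upper'' image ($R_l(\Path)_|$). Calling these rectangles $r_i$ and $r_j$, Lemma \ref{lemma:P} applied to the relevant wall shuffle yields $r_j <_{\widetilde{P}} r_i$ from $\widetilde{R}$ and $r_i <_{P'} r_j$ from $R_l(\Path)_|$.

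The crux is to upgrade the relation $r_i <_{P'} r_j$ to $r_i <_P r_j$ in $R$ itself. For this one traces the wall (or pivoted edge) involved in the local move back through the construction of $R_l(\Path)_|$. If the wall is inherited unchanged from a wall of $R$ lying below $\Path$, the relation $r_i <_P r_j$ follows directly from the same wall shuffle in $R$. If instead the wall or a relevant vertex is produced by the construction---via extension of an open horizontal edge, extension of an open vertical segment, or the final wall-slide step placing constructed vertices---then the prescribed position of the constructed vertices (immediately above the uppermost right-endpoint vertex on each vertical wall, and the analogous condition for horizontal walls) encodes a wall-shuffle ordering of $R$ that yields $r_i <_P r_j$ through Lemma \ref{lemma:P}, possibly after a short transitive chain through a rectangle adjacent to $\Path$.

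The main obstacle will be the case-by-case analysis, especially for wall-slide covers in which one of the two involved vertices is constructed. In those subcases one must follow the extended edge back to where it originally terminated on $\Path$ in $R$ and exploit the geometry of $R$ below $\Path$ to witness $r_i <_P r_j$. The required configuration arguments run parallel to, but in the reverse direction from, those in the proof of Lemma \ref{lemma:compat} (Figures \ref{Rl-} and \ref{Rl-2}): there one showed that $R_l(\Path)_|$ preserves every $P|_T$ relation, and here one shows that any descent in $gRec_{|T|}$ below $R_l(\Path)_|$ must destroy at least one.
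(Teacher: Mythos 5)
Your proposal follows essentially the same route as the paper's proof: exhibit a pair $r_i, r_j$ with $r_i <_P r_j$ but $r_j <_{\widetilde{P}} r_i$, case on the five cover moves from Theorem \ref{thm: covers}, and trace the relevant wall or edge back through the construction of $R_l(\Path)_|$ to recover the relation in $P$. The only minor difference is that in the wall-slide cases the paper does not read the relation off the constructed vertices' positions but rather shows the constructed-vertex configuration cannot arise there (the final wall-slide step of the construction would have displaced it), forcing the configuration to be inherited directly from $R$; this is a detail of execution, not of approach.
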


\begin{proof}
Let $T$ be the good set corresponding with good path $\Path$.
Again to simplify notation, we label each rectangle of $R_l(\Path)_|$ using the label (which is an element of $T$) inherited from $R$. 
By labeling the upper-left to lower-right diagonal of the square $S$ with the elements of $T$ (in numerical order), we also obtain a labeling of the rectangles of $\widetilde{R}$ by the elements of $T$.
Let $P'$ be the partial order on $T$ such that~$x$ is a linear extension of $P'$ if and only if $\gamma(x)=R_l(\Path)_|$ and~$\widetilde{P}$ be the partial order on $T$ such that $x$ is a linear extension of~$\widetilde{P}$ if and only if $\gamma(x)=\widetilde{R}$.
To show that $\widetilde{P}$ is not compatible with $P$, we demonstrate that no linear extension of~$\widetilde{P}$ respects the ordering of $P$ or equivalently that there exist $r_i, r_j$ in $T$ satisfying $r_j<_{\widetilde{P}}r_i$ such that $r_i<_P r_j$.

Since $\widetilde{R} \lessdot R_{l}(\Path)_|$ in ${gRec_{|T|}}$, by Theorem \ref{thm: covers} a wall slide or generic pivot is performed on ${R_l(\Path)}_|$ to obtain $\widetilde{R}$.
First assume that rectangles $r_i$ and $r_j$ of $R_l(\Path)_|$ form a configuration illustrated in one of the three leftmost upper diagrams of Figure~\ref{fig:moves} with $r_i<_{P'}r_j$ and that the edge $E$ which is pivoted to obtain $\widetilde{R}$ is completely contained in $R_l(\Path)$.
Since $E$ is completely contained in $R_l(\Path)$, rectangles~$r_i$ and~$r_j$ form this same configuration in $R$ and we have that $r_i<_P r_j$.
Pivoting $E$ to obtain~$\widetilde{R}$, we see that $r_j$ precedes $r_i$ in a wall shuffle of $\widetilde{R}$ so $r_j<_{\widetilde{P}} r_i$. 
Thus, in this case, $\widetilde{P}$ is not compatible with $P$.

We next consider the cases in which $R_l(\Path)_|$ and $\widetilde{R}$ differ by a wall slide.
If they differ by a horizontal wall slide, then in $R_{l}(\Path)_|$ rectangles $r_i$ and $r_j$ form the configuration shown in the left diagram of Figure \ref{fig:incompat}.
By the construction of~$R_l(\Path)_|$ (since no new vertical edges extending upward from a horizontal walls are created), the lower-right vertex and some portion of the right side of rectangle $r_j$ are contained in $R_l(\Path)$.
Additionally, since the upper-left vertex of rectangle~$r_i$ is left of the lower-right vertex of rectangle $r_j$ and $\Path$ is a good path, the left side of rectangle~$r_i$ is contained in $R_l(\Path)$.
Thus rectangles $r_i$ and $r_j$ form the same configuration in $R$, implying that $r_i <_P r_j$.
Performing a wall slide to obtain~$\widetilde{R}$ from~$R_l(\Path)_|$, we see that $r_j<_{\widetilde{P}} r_i$.
We conclude that in this case, $\widetilde{P}$ is not compatible with~$P$.
If rectangulations $R_l(\Path)_|$ and $\widetilde{R}$ differ by a vertical wall slide, then in $R_l(\Path)_|$ rectangles~$r_i$ and~$r_j$ form the configuration shown in the second diagram of Figure \ref{fig:incompat}.
If the lower-right vertex of rectangle $r_j$ were a constructed vertex, in the final step of the construction of $R_l(\Path)_|$, a wall slide would be performed to move the vertex below the upper-left vertex of rectangle $r_i$.
Thus, this configuration of rectangles would not appear in $R_l(\Path)_|$.
Therefore the lower-right vertex of rectangle $r_j$ is a vertex of $R_l(\Path)$.
Because $\Path$ is a good path, the upper-left vertex of rectangle $r_i$ is also a vertex of $R_l(\Path)$ so this configuration of rectangles $r_i$ and $r_j$ appears in $R$.
Thus $r_i<_P r_j$ and $r_j <_{\widetilde{P}} r_i$, implying that $\widetilde{P}$ is not compatible with~$P$.

\begin{figure}
\begin{tikzpicture}
\draw(0,0) --(2,0);
\draw (.5, -1)--(.5, 0);
\draw(1.5, 0)-- (1.5, 1);
\node at (1, .5) {$r_j$};
\node at (1, -.5) {$r_i$};

\draw (5, -1)--(5,1);
\draw (4, .25)--(5, .25);
\draw (5, -.25) -- (6, -.25);
\node at (4.5, .6) {$r_j$};
\node at (5.5, -.6) {$r_i$};
\end{tikzpicture}\caption{An illustration used in the proof of Lemma \ref{lemma:incompat}.}\label{fig:incompat}
\end{figure}
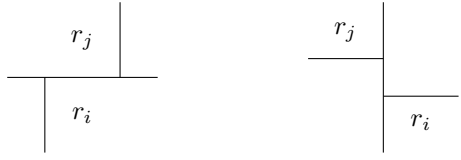

Finally, we consider the effect of performing a generic pivot on a horizontal edge~$E$ of ${R_l(\Path)}_|$ such that $E$ is not completely contained in $R_l(\Path)$.
There are two cases to consider: either $E$ is a new edge of ${R_l(\Path)}_|$ (in other words, no points of $E$ are contained in $R_l(\Path)$) or $E$ is the extension of some edge ${E'}$ of $R$. 

\begin{figure}
\begin{tikzpicture}

\draw (1,0)--(1,1);
\draw [ultra thick] (1,1)--(1,3);
\draw (0,2) -- (1,2);
\draw[line width= 2pt] (1,1)--(2,1);
\node at (.5, 1) {$r_k$};
\node at (.5, 2.5) {$r_j$};
\node at (1.5, .2) {$r_i$};
\node at (1.5, 2) {$r_l$};

\draw (5,0) --(5,2);
\draw(4,2) -- (5,2);
\draw(5,2) --(6,2);
\draw (6,0)--(6,3);
\node at (4.5, 1) {$r_k$};
\node at (5.5, 1) {$r_i$};
\node at (5, 2.5) {$r_j$};

\draw(9,0) --(9,3);
\draw(8,2) -- (9,2);
\draw (10,0) --(10,3);
\node at (8.5, 1) {$r_k$};
\node at (9.5, 1) {$r_i$};
\node at (8.5, 2.5) {$r_j$};
\end{tikzpicture}
\caption{Diagrams used in the proof of Lemma \ref{lemma:incompat}.
}
\label{fig:extension}
\end{figure}
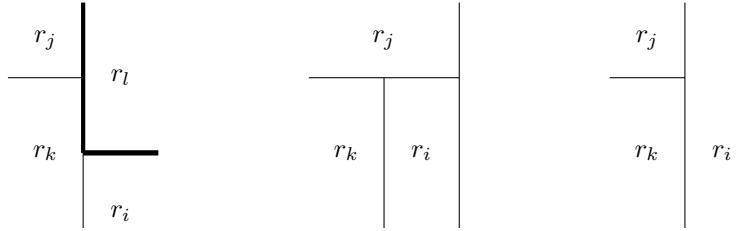

First consider the case where no points of $E$ are contained in $R_l(\Path)$.
By the construction of ${R_l(\Path)}_|$, edge $E$ results from a configuration in $R$ as shown in the leftmost diagram of Figure \ref{fig:extension}. 
 In the diagram, a subset of the rectangles of~$R$ are labeled $r_i, r_j, r_k, r_l$, and a portion of a good path $\Path$ is shown as a darkened segment.
The wall shuffle of the vertical wall shown contains the subsequence $r_k r_i r_j r_l$ so $r_i<_P r_j.$
To obtain ${R_l(\Path)}_|$, we remove $\Path$, extend the bottom of rectangle~$r_j$ by $\epsilon$ to the right, extend the left side of rectangle $r_i$ upwards until it hits the extension of the bottom of rectangle~$r_j$ and then extend the bottom of rectangle $r_j$ further until it reaches the extension of some vertical wall or the right side of $S$.
If necessary, we then perform wall slides along vertical walls, as described in the definition of~$R_l(\Path)_|$, but these wall slides do not affect the configuration of rectangles $r_i, r_j$, and $r_k$ in~$R_l(\Path)_|$ (shown in the center diagram of Figure \ref{fig:extension}).
Let $E$ be the edge of $R_l(\Path)_|$ that separates rectangles $r_i$ and~$r_j$.
Performing a generic pivot on $E$ to obtain $\widetilde{R}$ results in the configuration shown in the rightmost diagram of Figure \ref{fig:extension}.
In $\widetilde{R}$, the lower-right vertex of rectangle $r_j$ is below the upper-left vertex of rectangle $r_i$ along their shared wall, so $r_j$ precedes $r_i$ in this wall shuffle.
Thus $r_j<_{\widetilde{P}}r_i$, implying that $\widetilde{P}$ is not compatible with $P$.

Now consider the case where $E$ is the extension of some edge ${E'}$ of $R$.
This means that one endpoint~$v_0$ of $E'$ is contained in $R_l({\Path})$ and the other is on $\Path$.
In~$R$, let rectangle $r_i$ be below ${E'}$ and rectangle $r_j$ be above ${E'}.$
Thus the upper-left vertex of rectangle $r_i$ is left of the lower-right vertex of rectangle $r_j$ on the wall of~$R$ containing $E'$.
This implies that $r_i <_P r_j$. 
In~${R_l(\Path)}_|$, rectangles $r_i$ and $r_j$ are adjacent to $E$ with rectangle $r_i$ below rectangle $r_j$. 
By the construction of ${R_l(\Path)}_|$, the right endpoint of $E$ is the final vertex on the horizontal wall containing $E$.
So that $E$ can be pivoted to obtain $\widetilde{R}$ from ${R_l(\Path)}_|$, in ${R_l(\Path)}_|$ rectangles $r_i$ and~$r_j$ must form one of the configurations shown in Figure \ref{fig:flip}.
However, in both cases, pivoting~$E$ results in a rectangulation $\widetilde{R}$ in which $r_j$ precedes $r_i$ in a vertical wall shuffle.
Thus $r_j<_{\widetilde{P}} r_i$.
Therefore, regardless of the position of the generic pivot or wall slide used to obtain $\widetilde{R}$ from $R_l(\Path)_|$, the poset $\widetilde{P}$ is not compatible with $P$.
\end{proof}

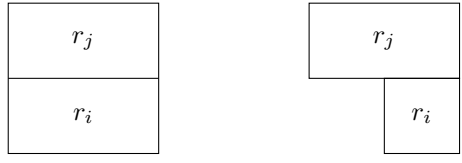
\begin{figure}
\begin{tikzpicture}
\draw (0,0) rectangle (2,2);
\draw(0,1) -- (2,1);
\node at (1,.5) {$r_i$};
\node at (1, 1.5) {$r_j$};

\draw (4,1) rectangle (6, 2);
\draw (5,0) rectangle (6,1);
\node at (5.5, .5) {$r_i$};
\node at (5, 1.5) {$r_j$};
\end{tikzpicture}
\caption{Configurations of $r_i$ and $r_j$ in ${R_l(\Path)}_|$ which allow for a generic pivot to be performed on the edge separating the rectangles.}
\label{fig:flip}
\end{figure} 

\begin{lemma}\label{lemma:coprod} 
Let $R \in gRec_n$ and $x\in Cl^2_n$ such that $\gamma(x)=R$.
For each set~$T$ that is good with respect to $x$ and corresponding good path $\Path$, we have that $\gamma(\st(x_{\min}|_{T}))={R_l(\Path)}_|$. \end{lemma}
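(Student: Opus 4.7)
The plan is to prove the identification $\gamma(\st(x_{\min}|_T))=R_l(\Path)_|$ by characterizing $\st(x_{\min}|_T)$ as the minimum linear extension of a restricted poset, establishing the inequality $\gamma(\st(x_{\min}|_T)) \leq R_l(\Path)_|$ via Lemma~\ref{lemma:compat}, and then ruling out strict inequality using Lemma~\ref{lemma:incompat} together with a standard convexity property of linear extensions inside the right weak order.

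First, because $T$ is an order ideal of $P$, the conditions ``$\gamma(y)=R$'' (equivalently, $y$ is a linear extension of $P$) and ``the first $|T|$ entries of $y$ form the set $T$'' decouple completely: the first $|T|$ entries must form a linear extension of $P|_T$, the last $n-|T|$ entries a linear extension of $P|_{T^C}$, and the inversions of $y$ between a position in $\{1,\dots,|T|\}$ and a position in $\{|T|+1,\dots,n\}$ depend only on the set $T$. Minimization in the right weak order therefore splits, and $\st(x_{\min}|_T)$ is the minimum linear extension of $\st(P|_T)$ in the right weak order on $S_{|T|}$.

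Next set $R^*:=\gamma(\st(x_{\min}|_T))$ and let $P'$ be the good set poset of $R_l(\Path)_|$. Lemma~\ref{lemma:compat} supplies a linear extension $t^*$ of $P'$ that is also a linear extension of $P|_T$; then $\st(t^*)$ is a linear extension of $\st(P|_T)$, so by the first step $\st(x_{\min}|_T)\le\st(t^*)$ in the right weak order on $S_{|T|}$, and Proposition~\ref{prop:quotient} together with the order-preservation of $\gamma$ gives $R^*\le R_l(\Path)_|$ in $gRec_{|T|}$.

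The main step is to argue by contradiction, assuming $R^*<R_l(\Path)_|$. The crucial input is the convexity of linear extensions of any poset inside the right weak order: if $\sigma\le\pi\le\tau$ and $\sigma,\tau$ are both linear extensions, then $\pi$ is one too, as an easy inversion-set check on any offending pair $a<_P b$ confirms under the two possible numerical orders of $a,b$. Consequently every saturated chain from $\st(x_{\min}|_T)$ up to $\st(t^*)$ in right weak order consists of linear extensions of $\st(P|_T)$. Passing through $\gamma$, deleting repetitions inside fibers, and invoking Proposition~\ref{prop:quotient} to see that the remaining covers remain covers in $gRec_{|T|}$, yields a saturated chain $R^*=\widetilde{R}_0\lessdot \widetilde{R}_1\lessdot\cdots\lessdot \widetilde{R}_\ell=R_l(\Path)_|$ with $\ell\geq 1$, in which each $\widetilde{R}_i$ has good set poset compatible with $P$, because it contains a lifted permutation that is simultaneously a linear extension of $\st(P|_T)$. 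In particular $\widetilde{R}_{\ell-1}$'s good set poset is compatible with $P$, contradicting Lemma~\ref{lemma:incompat} applied to the cover $\widetilde{R}_{\ell-1}\lessdot R_l(\Path)_|$. The main obstacle is the chain-lifting step, which is handled cleanly by the convexity observation and a short tracking of inversions.
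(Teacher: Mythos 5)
Your proof is correct and follows essentially the same route as the paper's: both reduce the statement to Lemmas \ref{lemma:compat} and \ref{lemma:incompat}, first getting $\gamma(\st(x_{\min}|_T)) \le {R_l(\Path)}_|$ from compatibility and then ruling out strict inequality by producing a cover ${\widetilde R} \lessdot {R_l(\Path)}_|$ whose good set poset is compatible with $P$, contradicting Lemma \ref{lemma:incompat}. The only difference is organizational: the paper locates that cover upstairs in $S_n$ by intersecting the fiber interval $Y=\{y : \gamma(y)=R,\ \{y_1,\dots,y_{|T|}\}=T\}$ with the set $X$ of elements whose $T$-prefix is a linear extension of $P'$, while you locate it downstairs in $S_{|T|}$ via the convexity of linear extensions along a saturated chain from $\st(x_{\min}|_T)$ to $\st(t^*)$.
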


\begin{proof}
%

Again let $P$ be the good set poset of~$R$ and $P'$ be the good set poset of~$R_l(\Path)_|$ (where $P'$ is a poset on $T$).
Let $$Y=\{y\in S_n\ |  \ \gamma(y)=R \text{ and } \{y_1,...,y_{|T|}\}=T\}.$$
The set of all permutations that map to $R$ under $\gamma$ and the set of all permutations whose first $|T|$ entries are the elements of $T$ each form a nonempty interval in the right weak order on $S_n$. 
Since $T$ is a good set, the intersection of these intervals is nonempty.
Thus, since the right weak order is a lattice, the elements of $Y$ form an interval in this lattice.
By definition, the minimal element of $Y$ is $x_{\min}$.

Let $$X=\{x'\in S_n \ | \ \gamma(x')=R \text{ and } x_1'\cdots x_{|T|}' \text{ is a linear extension of } P'\}.$$
By Lemma \ref{lemma:compat}, the set $X$ is non-empty.
Note that $X\subseteq Y$.
To prove the lemma, we wish to show that $x_{\min}\in X$.

To obtain a contradiction, assume that $x_{\min}\notin X$.
Thus, there exists some $y\in Y$ such that $y\notin X$ and $y$ is covered by an element of $X$.
Since $y\in Y$ and $y\notin X$, we have that $\gamma(\st(y|_T))\neq R_l(\Path)_|.$
Then $\gamma(\st(y|_T))$ is some $\widetilde{R}$ such that $\widetilde{R}\lessdot R_l(\Path)_|$ in~$gRec_{|T|}$.
By Lemma \ref{lemma:incompat}, the good set poset of $\widetilde{R}$ is not compatible with $P$.
This implies that $y\notin Y$, a contradiction.
\end{proof}

%

In what follows, we will consider distinct rectangulations and good sets simultaneously. 
To identify the rectangulation and good set used in each case, we will use the notation $x_{\max}(R, T), x_{\min}(R,T), R_l(\Path(R,T)),$ and  $R_u(\Path(R,T))$ where $R$ indicates the rectangulation of interest, $x\in Cl_n^2$ such that $\gamma(x)=R$, and $T$ is a set that is good with respect to $x$.
We will also make use of the maps $\rf, \rfu, \rp,$ and $\rv$.
Given a generic rectangulation $R$, recall that $\rf(R)$ is the reflection of $R$ about the upper-left to lower-right diagonal of the square $S$ and $\rfu(R)$ is the reflection of $R$ about the lower-left to upper-right diagonal of $S$.
Given a permutation $x=x_1\cdots x_n$, recall that $\rp(x)=x_n \cdots x_1$ and $\rv(x)=(n+1-x_1)\cdots(n+1-x_n).$

\begin{lemma}\label{lemma:rvrfu}
Let $R$ be a generic rectangulation, let $x\in Cl_n^2$ such that $\gamma(x)=R$, let $T=\{t_1,...,t_p\}$ a set that is good with respect to $x$, and let $T'=\{n+1-t_1,...,$ \mbox{$n+1-t_p\}$}.
Then $x_{\max}(R,T)|_{T}=\rv(x_{\min}(\rfu(R), T')|_{T'}).$
\end{lemma}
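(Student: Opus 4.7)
The plan is to exploit the antiautomorphism $\rv$ of the right weak order together with the identity $\gamma \circ \rv = \rfu \circ \gamma$ noted earlier in the paper. First, I would observe that the set $Y(R,T) = \{y \in S_n : \gamma(y) = R \text{ and } \{y_1,\dots,y_p\} = T\}$ is an interval in the right weak order on $S_n$: it is the intersection of the fiber $\gamma^{-1}(R)$ (which is an interval by Theorem \ref{fiber}) with the set of permutations whose first $p$ positions are occupied by the elements of $T$ in some order (also easily seen to be an interval), and the intersection of two intervals in a lattice is again an interval whenever it is nonempty. By the definitions given in Section \ref{sect:clumped}, the minimum of $Y(R,T)$ is exactly $x_{\min}(R,T)$ and its maximum is $x_{\max}(R,T)$.

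Next, I would apply $\rv$ to this entire interval. Because $\rv$ acts on values while preserving positions, it carries $Y(R,T)$ bijectively onto $Y(\rfu(R), T')$: the condition $\gamma(y) = R$ becomes $\gamma(\rv(y)) = \rfu(\gamma(y)) = \rfu(R)$ thanks to $\gamma \circ \rv = \rfu \circ \gamma$, while the condition on the first $p$ positions becomes $\{n+1-y_1,\dots,n+1-y_p\} = T'$. Since $\rv$ is an antiautomorphism of the right weak order, it reverses this interval, so the top of $Y(R,T)$ is sent to the bottom of $Y(\rfu(R), T')$; that is, $\rv(x_{\max}(R,T)) = x_{\min}(\rfu(R), T')$.

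To conclude, I would restrict this permutation-level identity to the first $p$ entries of each side. Because $\rv$ preserves positions, this yields $\rv(x_{\max}(R,T)|_T) = x_{\min}(\rfu(R), T')|_{T'}$, where $\rv$ applied to a length-$p$ subsequence is interpreted in the natural way as replacing each value $v$ by $n+1-v$. Applying the involution $\rv$ once more to both sides gives the identity claimed by the lemma.

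The only subtle point is checking that $T'$ is in fact good with respect to some element of $Cl_n^2$ mapping to $\rfu(R)$, so that $x_{\min}(\rfu(R), T')$ is defined in the first place; but this is automatic since $\rv(x_{\max}(R,T))$ itself is a permutation mapping to $\rfu(R)$ under $\gamma$ whose first $p$ entries are precisely $T'$, and then $\pi_{\downarrow}^2$ applied to it produces the required witness in $Cl_n^2$. I do not anticipate any real obstacle here: the lemma is essentially a repackaging of the antiautomorphism property of $\rv$ combined with the equivariance $\gamma \circ \rv = \rfu \circ \gamma$.
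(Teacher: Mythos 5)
Your proof is correct, but it takes a genuinely different route from the paper's. The paper argues at the level of the good set posets: it uses the wall-shuffle correspondence between $R$ and $\rfu(R)$ to produce an isomorphism $P|_T\cong P'|_{T'}$ (via $v\mapsto n+1-v$) and then matches, step by step, a greedy construction of the maximal linear extension of $P|_T$ against a greedy construction of the minimal linear extension of $P'|_{T'}$. You instead work with full permutations in $S_n$: you observe that $Y(R,T)=\gamma^{-1}(R)\cap\{y:\{y_1,\dots,y_p\}=T\}$ is a nonempty intersection of two weak-order intervals (the first by Theorem \ref{fiber}, the second being a coset of a parabolic subgroup), hence an interval with top $x_{\max}(R,T)$ and bottom $x_{\min}(R,T)$, and then apply the value-reversing antiautomorphism $\rv$, which by $\gamma\circ\rv=\rfu\circ\gamma$ carries $Y(R,T)$ order-reversingly onto $Y(\rfu(R),T')$; restricting to the first $p$ positions gives the claim. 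Your approach buys a shorter, more structural argument that sidesteps the good set posets entirely and also handles the well-definedness of $x_{\min}(\rfu(R),T')$ (i.e., goodness of $T'$) as a byproduct, whereas the paper's poset-level argument is more self-contained within the combinatorial machinery it has already set up (Lemma \ref{lemma:P}) and makes the entrywise correspondence of the two restricted orderings completely explicit. Both ultimately rest on the same symmetry $\gamma\circ\rv=\rfu\circ\gamma$, which the paper establishes and you correctly invoke.
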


\begin{proof}
Let $P$ be  the good set poset of $R$ and $P'$ be the good set poset of $\rfu(R)$.
Since each wall shuffle $\sigma_W=x_{i_1}\cdots x_{i_s}$ of $R$ corresponds to a wall shuffle $\sigma_{W'}=(n+1-x_{i_1}) \cdots (n+1-x_{i_s})$ of $\rfu(R)$, we have that $x_i < x_j$ in $P$ if and only if $n+1-x_i<n+1-x_j$ in $P'$.
Because $T$ is a good set with respect to $R$, this implies that $T'$ is a good set with respect to $\rfu(R)$.
The order ideal $P|_T$ is isomorphic to the order ideal $P'|_{T'}$.
To find $x_{\max}(R,T)|_T$ an entry at a time using $P|_T$, at each step we consider the elements that have not yet been selected and are only greater than elements that have already been selected.
From this collection of elements, we choose the numerically largest value.
Analogously, to find $x_{\min}(\rfu(R), T')|_{T'}$ using~$P'|_{T'}$, we select the numerically smallest value from the candidate elements at each step.
Constructing $x_{\max}(R,T)|_T$ and $x_{\min}(\rfu(R), T')|_{T'}$ simultaneously, at each step the numerically largest candidate element of $P$ coincides with the numerically smallest candidate element of $P'$ under the poset isomorphism.
Thus applying~$\rv$ to $x_{\min}(\rfu(R), T')|_{T'}$ we obtain $x_{\max}(R,T)|_T$.
\end{proof}

\begin{lemma}\label{lemma:rprf}
Let $R\in gRec_n$, let $x\in Cl_n^2$ such that $\gamma(x)=R$ and let $T$ be a set that is good with respect to $x$. 
Then $x_{\min}(R,T)|_{T^C}=\rp(x_{\max}(\rf(R), T^C)|_{T^C})$.
\end{lemma}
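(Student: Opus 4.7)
My plan is to mirror the strategy of Lemma \ref{lemma:rvrfu}, substituting the pair $(\rp, \rf)$ for $(\rv, \rfu)$. The two ingredients I will lean on are the identity $\rf \circ \gamma = \gamma \circ \rp$ established just before Lemma \ref{lemma:rf}, and the fact, recalled in the proof of Lemma \ref{lemma:rf}, that $\rp$ is an antiautomorphism of the right weak order on $S_n$.

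First I will check that $T^C$ is a good set with respect to some 2-clumped permutation mapping to $\rf(R)$, so that both sides of the claimed identity are defined. If $x' \in S_n$ witnesses goodness of $T$ with respect to $x$ (so that $\gamma(x')=R$ and $\{x'_1,\dots,x'_p\}=T$), then $\rp(x')$ satisfies $\gamma(\rp(x')) = \rf(\gamma(x')) = \rf(R)$ and has $T^C$ as its first $q$ entries, giving the required witness after projecting by $\pi_{\downarrow}^2$ if a 2-clumped representative is wanted.

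Next I will set $A = \{y \in S_n : \gamma(y) = \rf(R) \text{ and } \{y_1,\dots,y_q\} = T^C\}$ and $B = \{z \in S_n : \gamma(z)=R \text{ and } \{z_1,\dots,z_p\} = T\}$. Each is an intersection of a fiber of $\gamma$ with the set of permutations starting with a prescribed prefix-set; by the same lattice argument used at the start of the proof of Lemma \ref{lemma:coprod}, and by the nonemptiness established in the previous step, both $A$ and $B$ are nonempty intervals of the right weak order. The map $\rp$ sends $A$ bijectively onto $B$: it intertwines $\gamma$ with $\rf$ (so $\gamma(y)=\rf(R)$ becomes $\gamma(\rp(y))=R$), and it turns the prefix-set constraint $\{y_1,\dots,y_q\}=T^C$ into the prefix-set constraint $\{z_1,\dots,z_p\}=T$. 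Being an antiautomorphism of the right weak order, $\rp$ sends the maximum of the interval $A$ to the minimum of the interval $B$, yielding $\rp(x_{\max}(\rf(R), T^C)) = x_{\min}(R, T)$.

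Finally I will read off the restrictions to $T^C$. In $x_{\max}(\rf(R), T^C)$ the elements of $T^C$ occupy positions $1,2,\dots,q$ in the order $x_{\max}(\rf(R), T^C)|_{T^C}$. After applying $\rp$ they sit in positions $n, n-1, \dots, p+1$ in reversed order; these are precisely the positions occupied by $T^C$ in $x_{\min}(R, T)$, so reading positions $p+1, \dots, n$ of $x_{\min}(R,T)$ produces $x_{\max}(\rf(R), T^C)|_{T^C}$ read backwards, which by definition is $\rp$ applied to that sequence. The most delicate point, and a rather mild one, is confirming that $\rp$ really does send the maximum of $A$ to the minimum of $B$; this uses crucially that $A$ and $B$ are intervals (so that their maxima and minima are unique and the antiautomorphism acts as expected), and once that is in hand the rest of the argument is bookkeeping about how position-reversal interacts with restriction.
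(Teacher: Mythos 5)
Your proof is correct, but it takes a genuinely different route from the paper's. The paper argues entirely inside the good set posets: since $\rf$ reverses every wall shuffle, the good set poset $P'$ of $\rf(R)$ is dual to the good set poset $P$ of $R$, so $T^C$ (a dual order ideal of $P$) is an order ideal of $P'$; one then checks, pair by pair, that for $u_i<u_j$ in $T^C$ the relative order of $u_i$ and $u_j$ in $x_{\min}(R,T)|_{T^C}$ agrees with their relative order in $\rp(x_{\max}(\rf(R),T^C)|_{T^C})$, using the greedy characterization of minimal and maximal linear extensions. You instead work globally in the right weak order: $\rp$ is an antiautomorphism intertwining $\gamma$ with $\rf$ and exchanging the prefix-set constraints, so it carries the interval defining $x_{\max}(\rf(R),T^C)$ onto the interval defining $x_{\min}(R,T)$ and hence sends the maximum of the former to the minimum of the latter; the restriction identity then reduces to bookkeeping about how position-reversal interacts with taking the last $q$ entries. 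Your version actually establishes the stronger statement $\rp(x_{\max}(\rf(R),T^C))=x_{\min}(R,T)$ at the level of full permutations, and it reuses the interval argument already deployed in Lemma~\ref{lemma:coprod}, which makes it quite economical; the paper's version stays within the good-set-poset framework that drives Lemmas~\ref{lemma:P}--\ref{lemma:coprod} and exactly parallels its proof of Lemma~\ref{lemma:rvrfu} (which, note, is itself a greedy poset-isomorphism argument rather than the interval argument you describe as your model). Both are valid; no gap.
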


\begin{proof}
Let $P$ be the good set poset of~$R$ and let $P'$ denote the good set poset of~$\rf(R)$.
Since applying $\rf$ to $R$ reverses each wall shuffle, we have that $P'$ is dual to $P$.
The set $T$ is an order ideal of $P$ so the set $T^C$ is a dual order ideal of $P$.
Thus $T^C$ is an order ideal of $P'$. 
Let $u_i, u_j \in T^C$ such that $u_i<u_j$ in numerical order.
Then $u_i$ precedes $u_j$ in $x_{\max}(\rf(R), T^C)|_{T^C}$ if and only if $u_i<_{P'} u_j$.
Equivalently, $u_i$ follows $u_j$ in $\rp(x_{\max}(\rf(R), T^C)|_{T^C})$ if and only if $u_i<_{P'}u_j$.
Entry $u_i$ follows $u_j$ in $x_{\min}(R, T)|_{T^C}$ if and only if $u_j<_P u_i$.
Since $P$ and $P'$ are dual posets, the result follows.
\end{proof}

\begin{lemma}\label{lemma:coprod2}
Let $R \in gRec_n$ and $x\in Cl^2_n$ such that $\gamma(x)=R$.
For each set $T$ that is good respect to $x$ and corresponding good path $\Path$, we have 
$\gamma(\st(x_{\max}|_T))={R_l(\Path)}_-$,
$\gamma(\st(x_{\min}|_{T^C}))={R_u(\Path)}_|$, 
and $\gamma(\st(x_{\max}|_{T^C}))={R_u(\Path)}_-$.
\end{lemma}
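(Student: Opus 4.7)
The plan is to reduce all three identities to Lemma \ref{lemma:coprod} using the reflection symmetries $\rf$ and $\rfu$ together with Lemmas \ref{lemma:rvrfu} and \ref{lemma:rprf}. The central geometric observation is that the completions transform cleanly under reflection. Specifically, I will verify the identities
\[
\rfu(R_l(\Path)_|) = R_l(\rfu(\Path))_-, \quad \rf(R_l(\Path)_-) = R_u(\rf(\Path))_|, \quad \rfu(R_u(\Path)_|) = R_u(\rfu(\Path))_-,
\]
together with analogous identities for the remaining combinations of reflection, region, and completion type. These hold because $\rfu$ fixes the bottom-left corner of $S$ (so it preserves the ``below $\Path$'' region, giving $\rfu(R_l(\Path)) = R_l(\rfu(\Path))$) while interchanging horizontal with vertical edges and the directions ``right'' and ``up'', which exactly swaps the step-by-step construction of the vertical and horizontal completions (including the wall-slide positioning conditions). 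The identities involving $\rf$ are analogous: $\rf$ fixes the top-left and bottom-right corners, so it swaps ``below'' with ``above'', while swapping horizontal with vertical and the direction pairs right/down and up/left.

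To prove (a), I apply Lemma \ref{lemma:rvrfu} together with the fact that standardization commutes with value-reversal to get $\st(x_{\max}(R,T)|_T) = \rv(\st(x_{\min}(\rfu(R), T')|_{T'}))$ in $S_p$, where $T' = \{n+1-t : t \in T\}$. Using $\gamma \circ \rv = \rfu \circ \gamma$ and Lemma \ref{lemma:coprod} applied to $\rfu(R)$ with its good path $\rfu(\Path)$ (which corresponds to $T'$, since $\rfu$ sends each rectangle labeled $t$ to one labeled $n+1-t$ and preserves the ``below the path'' region), I obtain
\[
\gamma(\st(x_{\max}(R,T)|_T)) = \rfu\bigl(R_l(\rfu(\Path))_|\bigr) = R_l(\Path)_-,
\]
using the first reflection identity and $\rfu \circ \rfu = \mathrm{id}$.

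For (b), I apply Lemma \ref{lemma:rprf}, the identity $\gamma \circ \rp = \rf \circ \gamma$, and the just-proved (a) applied to $\rf(R)$ with good set $T^C$ and corresponding good path $\rf(\Path)$, yielding
\[
\gamma(\st(x_{\min}(R,T)|_{T^C})) = \rf\bigl(R_l(\rf(\Path))_-\bigr) = R_u(\Path)_|.
\]
For (c), I first establish the analog of Lemma \ref{lemma:rvrfu} with $T$ replaced by $T^C$, namely $x_{\max}(R,T)|_{T^C} = \rv(x_{\min}(\rfu(R), T')|_{T'^C})$ where $T'^C = \{n+1-t : t \in T^C\}$; this follows by the same poset-isomorphism argument used in Lemma \ref{lemma:rvrfu}, since $\rfu$ induces an isomorphism of good set posets (via $i \leftrightarrow n+1-i$) under which $T^C$ corresponds to $T'^C$ and max/min linear extensions are exchanged. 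Applying (b) to $\rfu(R)$ then gives
\[
\gamma(\st(x_{\max}(R,T)|_{T^C})) = \rfu\bigl(R_u(\rfu(\Path))_|\bigr) = R_u(\Path)_-.
\]

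The main obstacle is the verification of the reflection-completion identities in the first step: one must track carefully how each of the four constructive steps (the $\epsilon$-extensions, the extension-to-meet steps, the further extensions, and the specific wall-slide positioning conditions placing constructed vertices relative to ``uppermost/lowermost'' or ``rightmost/leftmost'' reference vertices) transforms under the reflection. Once these identities are in hand, each of (a)--(c) follows formally via the chain of rewritings outlined above.
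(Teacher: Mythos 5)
Your proposal is correct and follows essentially the same route as the paper: reduce all three identities to Lemma \ref{lemma:coprod} via the symmetries $\rf,\rfu,\rp,\rv$, Lemmas \ref{lemma:rvrfu} and \ref{lemma:rprf}, and the (asserted, not fully verified in the paper either) reflection identities relating the vertical and horizontal completions of $R_l$ and $R_u$. The only divergence is in the third identity, where the paper uses a min/max-swapped form of Lemma \ref{lemma:rprf} together with Lemma \ref{lemma:coprod} and an $\rf$-identity, while you use a complement analog of Lemma \ref{lemma:rvrfu} together with part (b) and an $\rfu$-identity; both are valid and equally routine.
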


\begin{proof}
We use Lemma \ref{lemma:coprod} together with the maps $\rf, \rfu, \rp$ and $\rv$ to prove the equalities of this lemma.

To prove the first equality, we define $T'=\{n+1-t_i \ | \ t_i \in T\}$.
We now describe the manipulations that appear in (\ref{eq:2a})-(\ref{eq:2d}) below.
Using Lemma~\ref{lemma:rvrfu}, we obtain~(\ref{eq:2a}).
Since $\st \circ \rv= \rv \circ \st$ and $\gamma \circ \rv = \rfu \circ \gamma$, we obtain~(\ref{eq:2b}) from~(\ref{eq:2a}).
By Lemma \ref{lemma:coprod}, we have that (\ref{eq:2c}) follows.
Finally, since the constructions of the vertical and horizontal completions of $R_l$ are related by reflection about the bottom-left to upper-right diagonal, we obtain the desired result.
\begin{IEEEeqnarray}{rCl}
\IEEEyesnumber\label{eq:1} \IEEEyessubnumber*
\gamma(\st(x_{\max}(R, T)|_T)) & = & \gamma(\st(\rv(x_{\min}(\rfu(R),T')|_{T'})))\label{eq:2a}  \\
& = & \rfu(\gamma(\st(x_{\min}(\rfu(R), T')|_{T'}))) \label{eq:2b}\\
& = & \rfu(R_l(\Path(\rfu(R), T'))_|) \label{eq:2c}\\
& = & R_l(\Path(R, T))_- \label{eq:2d}
\end{IEEEeqnarray}

To prove the second and third equalities of the lemma, we first use Lemma~\ref{lemma:rprf} (see (\ref{3a}) and (\ref{4a})).
For (\ref{4a}), we apply the involution $\rf$ to make use of the equation in Lemma \ref{lemma:rprf}.
To obtain (b) from (a) in both manipulations we note that $\st \circ \rp = \rp \circ \st $ and $\gamma \circ \rp = \rf \circ \gamma$.
Then (\ref{3c}) follows from (\ref{3b}) by applying the first result of this lemma.
We obtain (\ref{3d}) since the construction of the horizontal completion of $R_l$ and the construction of the vertical completion of $R_u$ are related by reflection about the upper-left to bottom-right diagonal.  

\begin{IEEEeqnarray}{rCl}
\IEEEyesnumber\label{eq:2} \IEEEyessubnumber*
\gamma(\st(x_{\min}(R, T)|_{T^C})) & = & \gamma(\st(\rp(x_{\max}(\rf(R),T^C)|_{T^C})))\label{3a} \\
& = & \rf(\gamma(\st(x_{\max}(\rf(R), T^C)|_{T^C})))\label{3b} \\
& = & \rf(R_l(\Path(\rf(R), T^C))_-) \label{3c}\\
& = & R_u(\Path(R, T))_| \label{3d}
\end{IEEEeqnarray}

By Lemma \ref{lemma:coprod}, we have that (\ref{4c}) follows from (\ref{4b}).
Since the construction of the vertical completion of $R_l$ and the construction of the horizontal completion of $R_u$ are related by reflection about the upper-left to bottom-right diagonal, the final equality of this lemma follows.

\begin{IEEEeqnarray}{rCl}
\IEEEyesnumber\label{eq:3} \IEEEyessubnumber*
\gamma(\st(x_{max}(R, T)|_{T^C})) & = & \gamma(\st(\rp(x_{\min}(\rf(R),T^C)|_{T^C})))\label{4a}\\
& = & \rf(\gamma(\st(x_{\min}(\rf(R), T^C)|_{T^C}))) \label{4b}\\
& = & \rf(R_l(\Path(\rf(R), T^C))_|) \label{4c}\\
& = & R_u(\Path(R, T))_-\label{4d}
\end{IEEEeqnarray}
%
%
\end{proof}

Lemma \ref{lemma:coprod2} completes the proof of Theorem \ref{thm:coproduct}.



\end{document}